\documentclass[11pt]{elsarticle}
\usepackage{mathtools,amssymb,amsthm}
\usepackage{graphicx}   
\usepackage{multirow}  
\usepackage{verbatim}
\usepackage{booktabs}
\usepackage{bm}   
\usepackage[left=2.5cm,right=2.5cm,top=2.5cm,bottom=2.5cm]{geometry}  
\usepackage{lineno,hyperref}
\usepackage{appendix}
\usepackage[justification=centering]{caption}  
\usepackage{color}

\usepackage{amsmath}
\usepackage{makecell}
\usepackage{tabularx}
\usepackage{hyperref}

\usepackage{amssymb}
\usepackage{mathrsfs}

\usepackage{graphicx}

\allowdisplaybreaks[4]  

\newtheorem{theorem}{Theorem}[section]
\newtheorem{assumption}[theorem]{Assumption}
\newtheorem{lemma}[theorem]{Lemma}

\newtheorem{remark}[theorem]{Remark}

\newtheorem{example}[theorem]{Example}

\numberwithin{equation}{section}

\renewcommand{\le}{\leq}
\renewcommand{\ge}{\geq}

\begin{document}
	
	\begin{frontmatter}
		\title{Uniform-in-time strong convergence rates of fully discrete approximations for stochastic Cahn--Hilliard equations with multiplicative noise}
		
		\author{Jiaqin He \fnref{label}}
		\ead{hejiaqin@seu.edu.cn}
		
		\author{Nan Deng \fnref{label}}
		\ead{deng_nan@seu.edu.cn}
		
		\author{Shuhang Zhang \fnref{label}}
		\ead{zhangshuhang@seu.edu.cn}
		
		\author{Wanrong Cao\corref{cor1} \fnref{label}}
		\ead{wrcao@seu.edu.cn}

		\cortext[cor1]{Corresponding author. }
		\address[label]{School of Mathematics, Southeast University, Nanjing 210096, P.R.China.}
		
		\begin{abstract}
			This paper investigates the uniform-in-time strong convergence rates of a fully discrete approximation for the stochastic Cahn--Hilliard equation driven by multiplicative noise in spatial dimensions $d\in\{1,2,3\}$. The proposed scheme combines a spectral Galerkin method in space with a backward Euler scheme in time. The main analytical difficulties arise from the state-dependent stochastic perturbation, the absence of a global monotonicity structure for the nonlinear term, and the fourth-order nature of the Cahn--Hilliard operator. In particular, these features make the derivation of uniform $L^{\infty}$-moment estimates highly nontrivial in three dimensions. For the continuous equation, by utilizing the It\^{o} formula to $\|u\|^p$ and introducing the energy functional $\mathcal{E}(u(t))$, we derive the uniform moment boundedness of the solution. At the fully discrete level, we develop discrete energy estimates and close the required high-order moment bounds through an induction argument. Based on these regularity estimates, we deduce uniform-in-time strong convergence rates for the fully discrete scheme. Moreover, we prove the existence and uniqueness of invariant measures for both the exact dynamics and the fully discrete numerical dynamics. Numerical experiments are provided to confirm the theoretical findings.

			\vspace{.5em}
			\noindent\textbf{Keywords:} strong convergence rates, \sep 	multiplicative noise, \sep non-globally Lipschitz nonlinearity, \sep stochastic Cahn--Hillard equation
			
			\vspace{.5em}
			\noindent\textbf{AMS subject classification:}  60H35, 60H15, 65C30.

		\end{abstract}
		
	\end{frontmatter}

	\section{Introduction}

	    The stochastic Cahn--Hilliard equation is a fundamental phase-field model originating from the description of phase separation in binary alloys subject to random fluctuations \cite{cahn1958,scarpa18}. The Allen--Cahn and Cahn--Hilliard equations are two prototypical phase-field models derived from the same free-energy functional but endowed with different gradient-flow structures: the former is an $L^2$-gradient flow, whereas the latter is an $H^{-1}$-gradient flow. In recent years, the long-time behavior and numerical approximation of the stochastic Allen--Cahn equation have been extensively investigated; see, e.g., \cite{chen2020,brehier2022,wang2024,liu2023,liu2024,liu2025}. By contrast, analogous results for the stochastic Cahn--Hilliard equation remain comparatively limited, primarily due to its fourth-order spatial operator, non-globally Lipschitz polynomial nonlinearity, and the presence of multiplicative noise.

     	Denote $\mathcal{D}:=(0,1 )^d$ with $d \in \{1,2,3\}$ and let $H:=L^{2}(\mathcal{D})$. In this paper, we consider the stochastic Cahn--Hilliard equation driven by multiplicative noise
    	\begin{equation}\label{Cahn-Hilliard equation}
	    	\left\{\begin{aligned}
		    	&du(t)+A(Au(t)+F(u(t)))dt = G(u(t))dW(t), \quad t >0, \\
		    	&u(0) = u_0 , 
	    	\end{aligned}\right.
    	\end{equation}
	    where $A=-\Delta:D(A)\subset H\to H$ denotes the negative Laplacian operator with homogeneous Neumann boundary conditions, and $W(t)$ is a Q-Wiener process defined on a filtered probability space $(\Omega, \mathcal{F}, \left \{ \mathcal{F}_{t} \right \}_{t\ge 0},\mathbb{P})$. The nonlinear term $F$ is the Nemytskii operator associated with $f(u)=u^{3}-u$, namely, $F(u)(x)=f(u(x)), x \in \mathcal{D}$. In accordance with the conservative structure of the Cahn--Hilliard dynamics, we consider a mass-conserving diffusion coefficient
	    $G:H^{-1}\to\mathcal{L}_2^{-1}$, where $\mathcal{L}_2^{-1}$ denotes the space of Hilbert--Schmidt operators taking values in the zero-mean space $\dot H^{-1}$; the precise definition and assumptions on $G$ are
	    given in Section~\ref{main results}. Consequently, $G(u)\,\mathrm dW(t)$ has zero spatial mean for every $u\in H^{-1}$, so the stochastic forcing preserves the total mass of the solution.

	    In the additive-noise case, corresponding to $G=I$ in Eq. \eqref{Cahn-Hilliard equation}, a standard and effective approach for the stochastic Cahn--Hilliard (SCH) equation is to exploit the additive structure by separating the stochastic convolution from the remaining nonlinear dynamics. More precisely, the solution is often decomposed as 
	    $$
	    	u(t)=v(t)+\mathcal{O}_t, \quad
	    	\mathcal{O}_t=\int_{0}^{t}S(t-s)dW(s),
	    $$
	    where $\mathcal{O}_t$ denotes the stochastic convolution associated with the linearized equation. This decomposition converts the original SPDE into a random evolution equation for $v(t)$, with the rough stochastic forcing separated from the nonlinear dynamics. The regularity and moment estimates of $\mathcal{O}_t$ can then be established independently and subsequently combined with the one-sided Lipschitz structure of the nonlinear drift to control the nonlinear terms. Based on this decomposition strategy, Kov\'acs et al.~\cite{kovacs2011} and Furihata et al.~\cite{furihata2018} first established strong convergence for spatial finite element semi-discretizations and fully discrete implicit Euler schemes, respectively. Later developments refined these results by deriving strong convergence rates for various fully discrete approximations, including spectral Galerkin methods combined with implicit Euler schemes~\cite{qi2024}, accelerated implicit Euler schemes~\cite{cui2018}, and explicit tamed exponential Euler schemes~\cite{lin2024,cai2023}. Weak convergence estimates have also been obtained in subsequent works~\cite{caimeng2023,brehier2025}. In the multiplicative-noise setting, however, this strategy is no longer directly applicable: the noise intensity depends on the evolving state of the system, and thus the stochastic forcing cannot be isolated as a precribed stochastic convolution independent of the nonlinear dynamics.

    	Compared with the finite-time theory, results concerning the long-time behavior of the SCH equation remain relatively limited. In the additive noise case, Da Prato et al.~\cite{da1996} established the
    	well-posedness and the existence and uniqueness of an invariant measure for the SCH equation. More recently, Deng et al.~\cite{Deng2025} further investigated the long-time behavior from $H_{\alpha}^{-1}$ to the more regular state space $H_{\alpha}$, and proposed a fully discrete scheme based on a tamed exponential Euler method in time and a finite difference discretization in space. Moreover, uniform-in-time strong convergence rates of the fully discrete approximation were established.

	    For SCH equations with bounded and Lipschitz continuous diffusion coefficients, for instance, $|g(u)|\le C$, Hong~\cite{hong2024} and Cui~\cite{cui2026} studied the fully discrete schemes and established optimal strong convergence rates in one and three spatial dimensions, respectively.

	    For the one-dimensional stochastic Cahn--Hilliard equation driven by multiplicative noise, Cui et al.~\cite{cui2023} considered a diffusion coefficient $G$ given by the Nemytskii operator $G(u)(x)=g(u(x))$,
	    where $g$ satisfies the sublinear growth condition $|g(r)|\le C(1+|r|^{a}),  a<1$. They established the well-posedness of the equation and derived strong convergence rates for its spectral Galerkin approximation.
	    A key ingredient in their analysis is the decomposition $u(t)=v(t)+\mathcal{O}_t$,  where $v(t)$ solves a random deterministic equation and $\mathcal{O}_t$ denotes a solution-dependent stochastic convolution. The $H^{-1}$-energy estimates for $v(t)$ yields
	    \begin{equation*}
	    	\|v(t)\|_{\dot{H}^{-1}}^{2}+\int_{0}^{t}\|\nabla v(s)\|^{2}ds+\int_{0}^{t}\|v(s)\|_{L^{4}}^{4}ds
	    	\le C\Big ( 1+\int_{0}^{t}\|\mathcal{O}_s\|_{L^{4}}^{4}ds \Big ),
	    \end{equation*}
	    whereas, for $0\le t\le T$, the stochastic convolution satisfies
	    \begin{equation*}
	    	\mathbb{E}\left[\|\mathcal{O}_t\|_{L^{p}}^{q} \right]\le C(T)\Big (1+ \mathbb{E}\Big[\Big ( \int_{0}^{t}\|v(s)\|_{L^{p}}^{4a}ds\Big )^{\frac{q}{4}} \Big]\Big ).
	    \end{equation*}
	    The sublinear growth condition $a<1$ is essential for closing the moment estimates, since it allows the terms generated by the multiplicative noise to be absorbed into the available dissipative bounds. The factorization method and the smoothing properties of the semigroup $S(t)$ yield sufficiently high spatial regularity of $\mathcal{O}_t$. Meanwhile, the one-dimensional Gagliardo--Nirenberg inequality
	    $$
	    \|\varphi\|_{L^6}
	    \le C\big(
	    \|A\varphi\|^{\frac{1}{6}}\|\varphi\|^{\frac{5}{6}}
	    +\|\varphi\|
	    \big), \quad \varphi \in H^2,
	    $$
	    provides the required $L^6$-control of the remaining component $v$. Then the authors derive moment bounds for $u(t)$ in $L^\infty$. A fully discrete approximation combining spectral Galerkin method in space with an drift-implicit Euler scheme in time was subsequently investigated in~\cite{cui2020}. However, the resulting moment and error estimates involve constants that depend on the terminal time $T$. Moreover, the corresponding Gagliardo--Nirenberg estimates in higher spatial dimensions involve larger interpolation exponents and therefore provide insufficient control to close the same estimates. Consequently, the one-dimensional argument cannot be directly extended either to uniform-in-time analysis or to higher-dimensional problems.

	    To the best of our knowledge, the uniform-in-time strong convergence of numerical approximations for the SCH equation driven by multiplicative noise, especially in higher spatial dimensions, has not yet been systematically investigated. It is therefore natural to compare this problem with the corresponding long-time numerical analysis for stochastic Allen--Cahn equations, where substantial progress has recently been made.
	    
	    For stochastic Allen--Cahn equations driven by multiplicative noise, fully discrete schemes with favorable long-time stability properties have recently been developed; see, e.g., \cite{liu2023, liu2024, liu2025,cailiu2026}. A crucial step in their analysis is the uniform-in-time moment boundedness of the mild solution in $H^{1}$. Combining the dissipative structure
	    \[
	    \left\langle F(u), Au \right\rangle
	    =
	    \left\langle \nabla F(u), \nabla u \right\rangle
	    \ge -\|\nabla u\|^{2}
	    \]
	    with the Itô formula applied to $\|u(t)\|_{1}^{p}$ and the appropriate Lipschitz and linear growth conditions on the diffusion coefficient yields uniform-in-time moment estimates in the $H^{1}$-norm. Subsequently, by using the mild formulation of the solution and the smoothing properties of the semigroup $S(t)$, the authors further derived uniform-in-time moment estimates in the $L^\infty$-norm. These estimates constitute a key ingredient in establishing the long-time stability and uniform-in-time strong convergence of the numerical approximations.

     	We consider the stochastic Cahn--Hilliard equation driven by multiplicative noise in spatial dimensions $d\in\{1,2,3\}$, and propose a fully discrete approximation based on a spectral Galerkin method in space and a backward Euler discretization in time. The long-time analysis of such approximations presents several substantial difficulties. First, since the noise intensity depends on the evolving state of the system, the decomposition strategy used in the additive-noise case is no longer directly applicable, which necessitates a simultaneous treatment of the noise and nonlinear terms in establishing uniform moment bounds. Second, the multiplicative noise considered here is structurally different from the Nemytskii-type noise studied in~\cite{cui2023}. The analysis in \cite{cui2023} relies on the sublinear growth $\alpha<1$ and on one-dimensional Gagliardo--Nirenberg inequalities. By contrast, we consider mass-conserving diffusion coefficients that take values in the zero-mean space and are allowed to exhibit linear growth. Consequently, the arguments in~\cite{cui2023} cannot be directly extended to Eq.~\eqref{Cahn-Hilliard equation}, particularly in three spatial dimensions, where Sobolev embeddings provide substantially weaker control of the nonlinear terms than in lower-dimensional settings. Finally, for SCH equations driven by multiplicative noise, the argument developed in \cite{liu2023} cannot be applied directly. Unlike in the stochastic Allen--Cahn setting, the nonlinear drift in the SCH equation takes the form $AF(u)$. Consequently, when applying the It\^{o} formula to the $H^{1}$-energy, one encounters the term
     	\begin{align*}
     		|\langle A^{1/2}F(u),A^{3/2}u\rangle|
     		\leq\varepsilon\|A^{3/2}u\|^2+C_{\varepsilon}\left(1+\|u\|_{L^\infty}^{4}\right)\|\nabla u\|^2.
     	\end{align*}
     	The appearance of the $L^{\infty}$-norm prevents the resulting $H^{1}$-energy estimate from being closed without an additional uniform-in-time $L^{\infty}$-moment bound. 
     	Therefore, a direct application of the It\^{o} formula yields, at most, uniform-in-time moment bounds in $H$ for the solution in Eq.~\eqref{Cahn-Hilliard equation}. Consequently, establishing uniform-in-time $H^{1}$ and $L^{\infty}$-moment bounds remains one of the main technical challenges in the long-time analysis of three-dimensional stochastic Cahn--Hilliard equations driven by multiplicative noise.

	   The aim of this work is to establish uniform-in-time strong convergence rates for the proposed fully discrete scheme. We first combine the It\^{o}  formula applied to $\|u(t)\|^{p}$ with the dissipativity condition in Assumption \ref{dissipative condition} to derive a uniform-in-time moment bound for $u(t)$ in $H$. We then lift the regularity estimates from $H$ to higher spatial regularity $H^1$, for which the energy functional
       $$
	    \mathcal{E}(u(t))=\int _{\mathcal{D}}\Big [ \frac{1}{2}\left | \nabla u(t,x)\right |^2 +\frac{1}{4}\left(u^2(t,x)-1\right)^2 \Big ]dx
	   $$ 
       severs as a fundamental tool. For the fully discrete implicit scheme, however, the argument is substantially different, since the It\^{o}  formula is not available in the discrete setting. We first employ discrete energy estimates to obtain the uniform-in-time second-moment bound $\|u_{m}^{N}\|_{L^{2}(\Omega,H)}\le C$. A considerably more delicate analysis is then required: by using mathematical induction, we obtain the uniform-in-time boundedness  $\|u_{m}^{N}\|_{L^{p}(\Omega,H)}\le C$ for $p\ge 2$. Similarly, through the energy functional $\mathcal{E}(u_{m}^{N})$ and mathematical induction, we derive uniform-in-time moment bounds for the numerical solution in $L^\infty$, namely $\|u_{m}^{N}\|_{L^{p}(\Omega,L^{\infty})}\le C$. Building on these uniform moment estimates, we then prove the uniform strong convergence rates of the numerical scheme. Our main result, stated in Theorem \ref{main theorem}, shows that, under Assumptions \ref{diffusion coeffient} and \ref{dissipative condition}, for $0<\tau <\min\big\{\tau_0^*,\tau_1^*, \frac{1}{\lambda _{1}^{2}},\frac{\lambda_{1}^{2}-\lambda_{1}-5L_{2}}{144L_2^2(\lambda_{1}^{2}+6)}\big\} $, it holds that
	   \begin{equation*}
		   \mathbb{E}[\|u(t_{m})-u_{m}^N\|^2  ]\le C\big(\lambda _{N+1}^{-2}+\tau\big). 
       \end{equation*}

	  The remainder of this paper is organized as follows. In Section~\ref{main results}, we introduce the fully discrete scheme for Eq.\eqref{Cahn-Hilliard equation} and state the main results. Section~\ref{section-uniform-moment-bounds} establishes uniform-in-time moment bounds for both the exact and numerical solutions. Section~\ref{ergodicity} establishes the existence and uniqueness of invariant measures for both the Markov process associated with Eq.~\eqref{Cahn-Hilliard equation} and the corresponding numerical Markov chain. In Section~\ref{strong convergence}, we derive the uniform-in-time strong convergence rates of the fully discrete scheme. Finally, Section~\ref{numerical experiments} presents numerical experiments that illustrate and validate the theoretical results.

    \section{Preliminaries and main results }\label{main results}
    
    In this section, we introduce the notation and assumptions used throughout the paper, formulate the fully discrete approximation of \eqref{Cahn-Hilliard equation}, and state the main results. We denote by $C$ the generic positive constant which may be different from line to line but is always independent of the discretization parameters and the time variable $t$. Sometimes we use $C(\cdot)$ to emphasize its dependence on the parameters in brackets.
    
    \subsection{Notation and Assumptions}
    
    Given two separable Hilbert spaces $(U, \left \langle\cdot,\cdot   \right \rangle_{U}, \left \| \cdot  \right \| _{U})$ and $(V, \left \langle\cdot,\cdot   \right \rangle_{V}, \left \| \cdot  \right \| _{V})$. Let $\mathcal{L}(U;V)$ be the Banach space of all bounded linear operators from $U$ to $V$ with the norm $\left \| \cdot  \right \| _{\mathcal{L}(U,V)}$. Denote by $\mathcal{L}_2(U;V)\subset\mathcal{L}(U;V)$ the space consisting of all Hilbert-Schmidt operators from $U$ into $V$, equipped with norms and inner products respectively given by
    \begin{equation}
    	\lVert T \rVert _{\mathcal{L}_2(U,V)}^{2}=\sum_{j\in \mathbb{N}}{\lVert T\psi_j \rVert}_{V}^2, \  \  \  \    \left< S,T \right> _{\mathcal{L}_2(U,V)}=\sum_{j\in \mathbb{N}}{(S\psi _j,T\psi _j)_{V}},\notag
    \end{equation}
    where the above definitions are independent of specific choice of orthonormal basis $\left\{\psi _j \right\} _{j=1}^{\infty}$ of $U$. 
    To simplify the notation, we often write $\mathcal{L}(U)$ and $\mathcal{L}_2(U)$ instead of $\mathcal{L}(U,U)$ and $\mathcal{L}_2(U,U)$, respectively. If $S\in \mathcal{L}(U)$ and $T\in \mathcal{L}_2(U,V)$, then the following inequalities hold,
    \begin{equation}
    	\lVert ST \rVert _{\mathcal{L}_2(U,V)}\le \lVert S \rVert _{\mathcal{L}(U)}\lVert T \rVert _{\mathcal{L}_2(U,V)} .\notag
    \end{equation}

    For any $p\ge 1$, we denote by $L^p(\mathcal{D})$ the Banach space of real-valued, $p$-times integrable functions endowed with the the associated norm $\left \| \cdot \right \|_{L^{p} }$. For brevity, we denote $H:=L^2(\mathcal{D})$ equipped with the norm $\lVert \cdot \rVert$ and inner product $\left \langle \cdot ,\cdot \right \rangle $. Let $(\Omega, \mathcal{F}, \left \{ \mathcal{F}_{t} \right \}_{t\ge 0},\mathbb{P})$ be a filtered probability space. Given a Banach space $\left ( \mathcal{X}, \|\cdot \|_{\mathcal{X}} \right )$, denote $L^p(\Omega ;\mathcal{X})$ the space of $\mathcal{X}$-valued $p$-times integrable random variables with the norm $\lVert \cdot \rVert _{L^p(\Omega ;\mathcal{X})}=\left( \mathbb{E}\left [  \lVert \cdot  \rVert_{\mathcal{X}} ^p\right ] \right) ^{\frac{1}{p}}$.

    Throughout this paper, we define $\dot{H}= \{ v\in H: \int _{\mathcal{D}}vdx=0 \}$ and an orthogonal projector $\mathcal{P}:H\to\dot{H}$ by
    \begin{equation*}
    	\mathcal{P}v=v-\left|\mathcal{D}\right|^{-1}\int _{\mathcal{D}}vdx.
    \end{equation*}
    We define $A=-\bigtriangleup$, the negative of the Neumann Laplacian with domain of definition  
    \begin{equation}
    	\mathrm{dom}(A)= \{ v\in H^{2}(\mathcal{D}):\frac{\partial v}{\partial n} =0 \ \text{on} \  \partial \mathcal{D}     \}.\notag 
    \end{equation}
    It is known that $A$ is a positive definite, self-adjoint and unbounded linear operator on $\dot{H}$ with compact inverse. There exists a family of orthonormal basis $\{\phi_{k}\}_{k=0}^{\infty }$ of $H$ with corresponding eigenvalues $\left \{\lambda_{k}\right \}_{k=0}^{\infty }$ such that  	
    \begin{equation*}
    	A\phi_{k}=\lambda_{k}\phi_{k},\ 0=\lambda _{0}<\lambda _{1}\le \cdots \le\lambda _{k}\le \cdots,  \lambda_{k}\to \infty , \text{as}\ k\to \infty 
    \end{equation*}
    where $\phi_0=\left | \mathcal{D}  \right |^{-\frac{1}{2} }$ and $\{\phi_{k}\}_{k=1}^{\infty }$ forms an orthonormal basis of $\dot{H}$. 
    We define 
    \begin{equation*}
    	\left\|v\right\|_{\dot{H}^{\beta}}:= \Big( \sum_{j=1}^{\infty }\lambda _{j}^{\beta }\left|\left\langle v, \phi _j \right\rangle\right|^2 \Big)^{\frac{1}{2}}, \ \ \ 
    	\left \|v\right \|_{\beta}:=\left ( \left\|v\right\|_{\dot{H}^{\beta}}^2+\left| \left\langle v, \phi_0 \right\rangle \right |^2 \right )^{\frac{1}{2}}, \quad \alpha \in \mathbb{R},
    \end{equation*}
    and corresponding spaces, 
    \begin{equation*}
    	\dot{H}^{\beta} =D(A^{\frac{\beta}{2}})= \{ v\in \dot{H}: \left\|v\right\|_{\dot{H}^{\beta}}<\infty\}, \ \ \
    	H^{\beta}= \{ v\in H: \left\| v \right\|_{\beta}<\infty\}.
    \end{equation*}
     It is well-known that for integer $\beta \in \{1,2\}$, the norm $\left\| \cdot \right\|_{\beta}$ is equivalent on $H^{\beta}$ to the standard Sobolev norm $\left\| \cdot \right\|_{W^{\beta,2}}$. The analytical semigroup $S(t)=e^{-A^2t}$ is generated by the operator $-A^2$. The regularity properties of the semigroup $S(t)$ have been stated as follows in \cite[Lemma 2.1]{Deng2025}
     	\begin{align}
     		&\left \| A^\nu S(t)\varphi  \right \|\le C\Big ( t^{-\frac{\nu }{2} }e^{-\frac{\lambda _{1}^{2} }{2}t }\left \| \varphi  \right \|+k(\nu )\left | \left \langle \varphi ,\phi_0 \right \rangle  \right |   \Big ),  &&\nu \ge 0, \quad \varphi \in H,\label{semigroup_1}\\
     		&\|A^{-\eta}(S(t)-I)\varphi \|\le Ct^{\frac{\eta }{2}}\|\varphi \|, && \eta \in [0,2], \quad \varphi \in H, \label{semigroup_2}\\
     		&\left \| A^\mu S(t)(I-S(s))\varphi  \right \|\le C  t^{-\frac{\mu -\rho +2r}{2} }s^re^{-\frac{\lambda _{1}^{2} }{2}t }\left \| A^{\rho }\varphi\right \|,  &&\mu -\rho +2r \ge 0, r \in [0,1], \quad \varphi \in H^{2\rho}, \label{semigroup_3}
     	\end{align}
     where the function $k:[0,\infty)\to \mathbb{R}$ is defined by $k(0)=1$ and $k(x)=0$ for all $x>0$.

    To simply the presentation, we assume the covariance operator $Q$ is a self-adjoint and nonnegative definite operator on $\dot{H}$. We denote $ U_0=Q^{1/2}\dot{H}$ and denote by $\mathcal{L}_{2}^{\theta}:=HS(U_0;\dot{H}^{\theta})$ the space of Hilbert-Schmidt operators from $U_0$ to $\dot{H}^{\theta}$ with $\theta \in \left \{-1,0,1 \right \}$. Let $W(t)$ be a standard $\dot{H}$-valued $Q$-Wiener process which satisfies the series representation 
    \begin{equation*}
    	W(t)=\sum_{k=1}^{\infty}\sqrt{q_k}\phi_k\beta_{k}(t),
    \end{equation*}
    where $\left \{ \beta_{k}(t)\right \}_{k=1}^{\infty }$ is a sequence of mutually independent Brownian motions, and $\left \{ (q_{k}, \phi_k)  \right \}_{k=1}^{\infty }$ are the eigenpairs of $Q$.

    We next introduce several assumptions on the diffusion coefficient $G$, which will play a crucial role in the subsequent long-time analysis. Suppose for $u \in H^{-1}$, $G(u)$ is a Hilbert-Schmidt operator in $\mathcal{L}_{2}^{-1}$. This implies that the stochastic term is mass-conservative: regardless of the value of $u$, the multiplicative noise $G(u)dW(t)$ takes values in the zero-mean space $\dot{H}^{-1}$ and therefore does not change the total mass of the solution. We further impose the following Lipschitz and growth conditions on $G$.
    
   \begin{assumption}\label{diffusion coeffient}
   	\noindent(i) Global Lipschitz continuity: There exist constants $L_1,L_{2} >0$ such that
   		\begin{align}
   			\|G(u)-G(v)\|_{\mathcal{L}_2^0}^2 &\le L_1 \|u-v\|^2, 
   			&& u,v \in H, \label{global Lipschitz in H}\\
   			\|G(u)-G(v)\|_{\mathcal{L}_2^{-1}}^2 
   			&\le L_{2} \|u-v\|_{-1}^2, && u,v \in H^{-1}.\label{global Lipschitz in H^-1}
   		\end{align}
 
   	\noindent(ii) Growth and spatial regularity: There exist constants $L_{3},L_{4},L_5,L_6,L_7,L_8>0$ such that
   		\begin{align}
   			&\|G(u)\|_{\mathcal{L}_2^0}^2 \le L_{3} \|u\|^2 + L_{4}, 
   			&& u \in H, \label{linear growth in H}\\
   			&\|G(u)\|_{\mathcal{L}_2^{1}}^2 
   			\le L_5 \|u\|_{1}^2 + L_6, 
   			&& u \in H^1,\label{linear growth in H^1}\\
   			&\sum_{k=1}^{\infty}\|G(u)Q^{1/2}\phi_k\|_{\dot{H}^{\frac{d}{4}}}^2 
   			\le L_7 \|u\|_{L^4}^2 + L_8, 
   			&& u \in L^4. \label{linear growth in L^4}
   		\end{align}
   \end{assumption}
     
     To obtain uniform-in-time estimates, we further impose the following condition on the relevant Lipschitz and growth constants in Assumption~\ref{diffusion coeffient}. It ensures that the dissipation is strong enough to control the effect of the noise. Let $Z$ denote the standard Gaussian random variable. Define $\mathfrak{m}_{r}:= \mathbb{E}|Z|^r=\frac{2^{\frac{r}{2}}\Gamma(\frac{r+1}{2})}{\sqrt{\pi } }$. Let $C_{d,4}>0$ be the Sobolev embedding constant from $H^{\frac{d}{4}}\hookrightarrow L^4$.

     \begin{assumption}[Dissipativity condition]\label{dissipative condition}
     	 Assume that the following dissipativity conditions hold:
     		\begin{align}
     			\noindent(i) \quad 	& \max \Big \{5L_{2}, 59L_3 \Big \} < \lambda_1(\lambda_1 - 1), \label{dissipative for H} \\
     			\noindent(ii) \quad 	& \max\{11, 20p-29\}(L_{5}+4C_{d,4}^2L_{7})+ \frac{1}{4p}\mathfrak{m}_{4p}(\sqrt{8}C_{d,4}^2L_7)^{2p} < \lambda_1^2 - 1,\quad 1 \le p \le 16.\label{dissipative for H^1}
     		\end{align}

     \end{assumption}
     
    \begin{remark}
    	The conditions \eqref{global Lipschitz in H}-\eqref{linear growth in H^1} are standard in the numerical analysis of SPDE with multiplicative noise; see, e.g., \cite{liu2023,liuqiao2021}. Condition \eqref{linear growth in L^4} is additionally imposed to control the diffusion coefficient in $L^4$, which is needed in the uniform-in-time estimates for the energy functional $\mathcal{E}(u)$ defined in \eqref{energy func}. 
    \end{remark}
    
   \begin{example} 
   	\begin{enumerate}
   		\item[(i)] \textit{Additive noise.}
   		In the additive-noise case $G=I$, the noise is assumed to take values in the zero-mean space $\dot{H}$, and is therefore mass-conservative. In this case, the conditions \eqref{global Lipschitz in H}--\eqref{linear growth in L^4} reduce to
   		$\|A^{\frac{1}{2}}Q^{\frac{1}{2}}\|_{\mathcal{L}_2(\dot{H})}< \infty$.
   		
   		\item[(ii)] \textit{Multiplicative noise.}
        For $\alpha\ge \frac{d}{8}$, we define  
      	$$
      	G(u)Q^{1/2}\phi_k
    	=
    	A^{-\alpha}\mathcal{P}\big( uQ^{1/2}\phi_k\big),
    	\qquad k\ge1 ,
    	$$
    	and assume that 
    	$$
          \sum_{k=1}^{\infty} q_k\|\phi_k\|_{W^{1,\infty}}^2<\infty . 
        $$
    	A direct computation shows that 
     	\begin{equation*}
   	 	\begin{aligned}
   			\|G(u)-G(v)\|_{\mathcal L_2^0}^2
   			=\sum_{k=1}^{\infty}q_{k}
   			\|A^{-\alpha}\mathcal{P}\big( (u-v)\phi_k\big)\|^2        
   			\le\lambda_{1}^{-2\alpha}
   			\sum_{k=1}^{\infty}q_{k}\|(u-v)\phi_k\|^2                         
   			\le L_{1}\|u-v\|^2,
   		\end{aligned}
    	\end{equation*}
    	with $L_{1}=\lambda_{1}^{-2\alpha  }(\sum_{k=1}^{\infty}q_{k}\|\phi_k\|_{L^\infty}^2)$. 
    	Similarly, through the product estimate
    	\begin{equation*}
    		\begin{aligned}
   			\|u\phi_k\|_{-1}\le \|u\|_{-1}\|\phi_k\|_{W^{1,\infty}}, \quad
   			\|u\phi_k\|_{1}\le \|u\|_{1}\|\phi_k\|_{W^{1,\infty}},
   	    	\end{aligned}
    	\end{equation*}
     	we can deduce that 
    	\begin{equation*}
   		\begin{aligned}
   			\|G(u)-G(v)\|_{\mathcal L_2^{-1}}^2\le L_{2}\|u-v\|_{-1}^2, \quad
   	        \|G(u)\|_{\mathcal L_2^{1}}^2\le L_{5}\|u\|_{1}^2,
   		\end{aligned}
   	    \end{equation*}
    	with $L_{2}=L_{5}=\lambda_{1}^{-2\alpha }(\sum_{k=1}^{\infty}q_{k}\|\phi_k\|_{W^{1,\infty}}^2)$ and $L_{6}=0$. Since $G(0)=0$, the global Lipschitz estimate also gives $\|G(u)\|_{\mathcal L_2^0}^2\le L_1\|u\|^2$,
    	and hence one may take $L_3=L_1$ and $L_4=0$.
    	Finally, the H\"{o}lder inequality yields that
    	\begin{equation*}
   		\begin{aligned}
   			\sum_{k=1}^{\infty}\|G(u)Q^{1/2}\phi_k\|_{\dot{H}^{\frac{d}{4}}}^2
   			\le\sum_{k=1}^{\infty}q_{k}\|A^{-\alpha}\mathcal{P}\big( u\phi_k\big)\|_{\dot{H}^{\frac{d}{4}}}^2
   			\le\sum_{k=1}^{\infty}q_{k}\|u\phi_k\|^2
   			\le L_7\|u\|_{L^4}^2.
   		\end{aligned}
    	\end{equation*} 		    
        with $L_7=\lambda_{1}^{\frac{d}{4}-2\alpha }(\sum_{k=1}^{\infty}q_{k}\|\phi_k\|_{L^4}^2)$.
        \end{enumerate}
     \end{example}

  Under our assumptions, Theorem~2.9 and the continuous-dependence result in Theorem~2.10 of Scarpa~\cite{scarpa18} yield a unique global strong solution in the sense of Definition~2.5 therein. Applying the variation-of-constants formula to the corresponding integral identity gives
  \begin{equation}\label{mild solution}
  	u(t)=S(t)u_0-\int_0^t S(t-s)AF(u(s))\,\mathrm{d}s
  	+\int_0^t S(t-s)G(u(s))\,\mathrm{d}W(s).
  \end{equation}
   Conversely, every mild solution in the same regularity class satisfies the integral formulation of Definition~2.5 in \cite{scarpa18}. Hence, uniqueness follows from Theorem~2.10. Therefore, Eq.~\eqref{Cahn-Hilliard equation} admits a unique global mild solution.

  \subsection{The fully discrete scheme}
  
  Let $\tau$ denote the temporal step size and $t_m=m\tau$ with $m \ge 0$. The projection operator $P_N:H\to H_N$ is defined by $P_Nx=\sum_{k=0}^{N}\left \langle x,\phi_k \right \rangle \phi_k$ for $\forall x\in H$. By the Parseval identity, we have
  \begin{equation}\label{projection_operator}
  	\left \| \left ( I-P_N\right )v \right \| \le \lambda _{N+1}^{-\frac{\alpha}{2}}\left \|v \right \|_{\alpha}, \quad  \forall v\in H^{\alpha },\ \alpha > 0.
  \end{equation}
  The fully discrete numerical scheme combining a spectral Galerkin method in space with a drift-implicit Euler scheme in time, is defined as  
  \begin{equation}\label{full-discrete scheme}
  	\left\{\begin{aligned}
  		&u_{m+1}^{N} - u_{m}^{N}
  		= -\tau A^2 u_{m+1}^{N}
  		- \tau A P_N F(u_{m+1}^{N})
  		+ P_N G(u_{m}^{N})\triangle_m W, \\	
  		&u^N_0 = P_Nu_0 , 
  	\end{aligned}\right.
  \end{equation}
  where $\triangle_m W:=W(t_{m+1})-W(t_m)$. 
  By introducing the operator $ S_{\tau,N}:=(I+\tau A^2)^{-1}P_N$ and the family operators $\{ S_{\tau,N}^{m}\}_{m\ge 1}$:
  \begin{equation*}
  	S_{\tau,N}^{m}v=(1+\tau A^2)^{-m}P_Nv=\sum_{k=0}^{N}(1+\tau \lambda_{k}^{2})^{-m}\left \langle v,\phi_{k}\right \rangle\phi_{k},  \quad  \forall  v\in H ,
  \end{equation*}
  the numerical solution admits a unique mild representation:
  \begin{equation}\label{full mild solution}
  	\begin{aligned}
  		u_{m+1}^{N}=S_{\tau,N}^{m+1}u_{0}^{N}-\tau\sum_{i=0}^{m}S_{\tau,N}^{m+1-i}A P_N F(u_{i+1}^{N})+\sum_{i=0}^{m}S_{\tau,N}^{m+1-i}P_N G(u_{i}^{N})\triangle_i W.
  	\end{aligned}
  \end{equation}

  \subsection{Main results}
  We now state the main results of the paper. We first establish uniform-in-time moment bounds and temporal regularity for the exact solution, followed by the corresponding uniform moment bounds for the fully discrete approximation. Based on these stability estimates, we derive the uniform-in-time strong convergence rate of the fully discrete
  scheme.

   \begin{theorem}\label{continous uniform moment}
  	Let $2\le p\le 24$ and $u_0\in L^{6p}( \Omega;H^{\gamma})$ for $\gamma \in \big(\max\{1, \frac{d}{2}\}, 2\big )$. Suppose Assumptions \ref{diffusion coeffient} and \ref{dissipative condition} hold. There exists a constant $C$ such that
  	\begin{equation}\label{uniform stability}
  		\sup_{t\geq0}\|u(t)\|_{L^p(\Omega;H^{\gamma})}\le C .
  	\end{equation}
  	If, in addition, assume that $u_{0}\in L^{6p}(\Omega; H^{2})$, then there exists a constant $C$ such that 
  	\begin{equation}\label{time regularity}
  		\|u(t)-u(s)\|_{L^{p}(\Omega;H)} \le C(t-s)^{\frac{1}{2}}.
  	\end{equation}
  \end{theorem}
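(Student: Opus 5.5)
The plan has three steps, in the order sketched in the introduction: first a uniform $H$-bound, then a uniform $H^1$-bound via the energy $\mathcal{E}(u)$, and finally a bootstrap to $H^{\gamma}$ through the mild formulation \eqref{mild solution}. The time-regularity estimate \eqref{time regularity} then follows from the same mild formulation.

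\textbf{Step 1 (uniform bound in $H$).} Write $u=\bar u+\mathcal{P}u$. The noise is mass conserving, and so is the drift $A(Au+F(u))$, so the mean $\bar u=\langle u_0,\phi_0\rangle\phi_0$ is constant in time. It therefore suffices to control $v=\mathcal{P}u$. I would apply It\^o's formula to $\|v\|_{-1}^{2}$ and then to $\|v\|^{p}$. In the $H^{-1}$ computation the drift contributes $-\|\nabla v\|^2-\langle F(u),v\rangle$. The term $\langle F(u),v\rangle$ is bounded below by $\tfrac12\|u\|_{L^4}^4-C$ up to mean terms, and the Poincar\'e inequality gives $\|\nabla v\|^2\ge\lambda_1\|v\|^2$. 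The It\^o correction is bounded by \eqref{global Lipschitz in H^-1} and \eqref{linear growth in H}. Condition \eqref{dissipative for H} is exactly what makes the net coefficient negative, which gives $\frac{d}{dt}\mathbb{E}\|v\|^p\le -c\,\mathbb{E}\|v\|^p+C$. For $p>2$ the extra martingale-correction terms $p(p-2)\|v\|^{p-4}\langle v,G\,\cdot\rangle^2$ are absorbed by Young's inequality. Gronwall's inequality then gives $\sup_t\mathbb{E}\|u(t)\|^{p}\le C$ for all the $p$ we need, and in particular (using the $L^4$ dissipation) a uniform bound on $\mathbb{E}\int_t^{t+1}\|u\|_{L^4}^{4}$.

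\textbf{Step 2 (uniform bound in $H^1$ via the energy).} Apply It\^o's formula to $\mathcal{E}(u)$ and then to $\mathcal{E}(u)^{p}$ for $1\le p\le 16$. Since $D\mathcal{E}(u)=Au+F(u)=:\mu$, the drift gives exactly $-\|\nabla\mu\|^2$, so the dissipation is clean. The It\^o correction is
\[
\tfrac12\sum_k\Big(\|\nabla GQ^{1/2}\phi_k\|^2+\langle f'(u),(GQ^{1/2}\phi_k)^2\rangle\Big).
\]
The first piece is bounded by $L_5\|u\|_1^2+L_6$ using \eqref{linear growth in H^1}. The second piece contains $3u^2$ times the square of the noise, which I would bound by
\[
\|u\|_{L^4}^2\sum_k\|GQ^{1/2}\phi_k\|_{L^4}^2\le C_{d,4}^2\|u\|_{L^4}^2\big(L_7\|u\|_{L^4}^2+L_8\bigr)
\]
using \eqref{linear growth in L^4}. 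This is controlled by $\mathcal{E}(u)$, since $\|u\|_{L^4}^4\le 4\mathcal{E}+C$. To close, I need a lower bound of the form $\|\nabla\mu\|^2\ge\lambda_1\|\mu-\bar\mu\|^2$, and then a lower bound of $\|\mu-\bar\mu\|^2$ by a multiple of $\lambda_1\|\nabla u\|^2$ plus a quartic term minus lower-order terms. The quadratic-variation term of $\mathcal{E}^p$, namely $\langle\mu,G\,dW\rangle^2$, produces the $\mathfrak{m}_{4p}$ moment that appears in \eqref{dissipative for H^1}, and that condition is what lets the dissipation dominate. The result is $\sup_t\mathbb{E}\,\mathcal{E}(u(t))^{p}\le C$, hence uniform moments of $\|u\|_1$ and of $\|u\|_{L^6}$. \emph{I expect this step to be the main obstacle}: the constants must be tracked carefully so that the resulting inequality matches \eqref{dissipative for H^1}, and the coupling between $\mu$ and $u$ in the lower bound of $\|\nabla\mu\|^2$ is delicate.

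\textbf{Step 3 (bootstrap to $H^{\gamma}$, then time regularity).} For the $H^\gamma$ bound, estimate each term of \eqref{mild solution} in $\|A^{\gamma/2}\cdot\|$.
\begin{itemize}
\item The initial term is bounded by \eqref{semigroup_1}, which gives decay plus the mean.
\item For the drift term, \eqref{semigroup_1} gives
\[
\int_0^t (t-s)^{-\frac{\gamma+2}{4}}e^{-\lambda_1^2(t-s)/2}\,\|F(u(s))\|\,ds,
\]
which is integrable because $\gamma<2$. Moreover $\|F(u)\|\le C(1+\|u\|_{L^6}^3)\le C(1+\|u\|_1^3)$, and Step 2 bounds its moments uniformly.
\item For the stochastic term, the Burkholder--Davis--Gundy inequality and \eqref{linear growth in H^1} give
\[
\Big(\int_0^t (t-s)^{-\frac{\gamma-1}{2}}e^{-\lambda_1^2(t-s)}\big(1+\|u\|_1^2\big)\,ds\Big)^{p/2},
\]
which is again uniformly bounded.
\end{itemize}
The moment level $6p$ required of $u_0$ accounts for the cube in $F$.

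For \eqref{time regularity}, write
\[
u(t)-u(s)=(S(t-s)-I)u(s)-\int_s^tS(t-r)AF\,dr+\int_s^tS(t-r)G\,dW.
\]
Each piece is handled as follows.
\begin{itemize}
\item The first piece is bounded by \eqref{semigroup_2} with $\eta=1$ together with a uniform $H^2$ bound, giving $(t-s)^{1/2}$. That $H^2$ bound comes from repeating Step 3 with $\gamma=2$ when $u_0\in H^2$, using \eqref{semigroup_3} to handle the borderline singularity, or alternatively from using $\eta=\gamma/2$ together with an $H^{\gamma}$ bound.
\item The drift piece is $O\big((t-s)^{1/2}\big)$, since $\|AS(t-r)\|\lesssim (t-r)^{-1/2}$ and $F(u)\in H$ with uniform moments.
\item The stochastic piece is $O\big((t-s)^{1/2}\big)$ by the Burkholder--Davis--Gundy inequality and \eqref{linear growth in H}.
\end{itemize}
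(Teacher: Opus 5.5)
Your overall architecture (It\^o on $\|u\|^p$, then It\^o on $\mathcal{E}^p$, then a mild-formulation bootstrap to $H^\gamma$) is the paper's, and your Step 3 for \eqref{uniform stability} is fine. The genuine gap is in \eqref{time regularity}. By restarting the mild formula at time $s$ you must bound $(S(t-s)-I)u(s)$. Getting $(t-s)^{1/2}$ from this term requires a uniform $L^p(\Omega;H^2)$ bound on $u(s)$, which you have not established: the paper proves only $H^\gamma$ bounds for $\gamma<2$, and the drift-kernel constant blows up as $\gamma\uparrow2$. Your fallback, \eqref{semigroup_2} with $\eta=\gamma/2$, gives only $(t-s)^{\gamma/4}$, which is strictly worse than $(t-s)^{1/2}$ for small $t-s$. ``Repeating Step 3 with $\gamma=2$'' also fails as stated. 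The drift kernel becomes $\|A^2S(t-s)\|\sim(t-s)^{-1}$ acting on $F(u)\in H$, which is not integrable, and \eqref{semigroup_3} does not touch that integral. The missing ingredient is extra spatial regularity of the nonlinearity: because $\gamma>d/2$, $H^\gamma\hookrightarrow L^\infty$, so $\|A^{1/2}F(u)\|\le C(1+\|u\|_{L^\infty}^2)\|\nabla u\|$ has uniform moments. The paper never restarts at $s$; it expands $u(s)$ from time $0$ and treats the three history terms separately. The term $(S(t-s)-I)S(s)u_0$ is handled by \eqref{semigroup_3} with $\mu=0$, $\rho=1$, $r=\frac12$; this is where $u_0\in H^2$ enters. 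The drift history $\int_0^s(S(t-s)-I)S(s-r)AF(u(r))\,dr$ is handled by \eqref{semigroup_3} with $\mu=1$, $\rho=\frac12$, $r=\frac12$, which produces the integrable kernel $(s-r)^{-3/4}$ against $\|A^{1/2}F(u(r))\|$. The stochastic history is handled by \eqref{semigroup_3} with $\mu=0$, $\rho=\frac12$, $r=\frac12$ together with \eqref{linear growth in H^1}. With the $\|A^{1/2}F(u)\|$ bound your option (a) could also be repaired: $\|A^{3/2}S(t-s)\|\lesssim(t-s)^{-3/4}$ handles the drift in $H^2$, and the stochastic term in $H^2$ costs only $(t-s)^{-1/2}\|G(u)\|_{\mathcal{L}_2^1}^2$. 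As written, however, neither option yields \eqref{time regularity}.

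In Step 2 you misplace the difficulty. No Gaussian moment $\mathfrak{m}_{4p}$ arises in continuous time. It comes from $\mathbb{E}[\|P_NG(u_m^N)\Delta_mW\|_{L^4}^{8p}\,|\,\mathcal{F}_{t_m}]$ in the discrete energy estimate, and \eqref{dissipative for H^1}, with its range $p\le16$, is tailored to the scheme. What you actually have to control is the quadratic-variation term $\frac{p(p-1)}{2}\mathcal{E}^{p-2}\sum_k\langle\mu,G(u)Q^{1/2}\phi_k\rangle^2\le\frac{p(p-1)}{2}\mathcal{E}^{p-2}\|\mathcal{P}\mu\|^2\|G(u)\|_{\mathcal{L}_2^0}^2$, where the inequality uses that $G$ is mean-free. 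Compared with the dissipation $p\,\mathcal{E}^{p-1}\|\nabla\mu\|^2\ge p\lambda_1\mathcal{E}^{p-1}\|\mathcal{P}\mu\|^2$, this term carries the factor $\|G(u)\|_{\mathcal{L}_2^0}^2/\mathcal{E}$, which is not small where $\mathcal{E}$ is small. Since $\|\mathcal{P}\mu\|$ is not controlled by $\mathcal{E}$, the term cannot be moved into a constant either, and no condition on $L_5,L_7$ helps. The paper's device is to shift $\mathcal{E}$ by a large $p$-dependent constant $C^*$ so that $L_3\|u\|^2+L_4\le\frac{1}{(p-1)\lambda_1}\mathcal{E}(u)$. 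The term is then at most $\frac{p}{2\lambda_1}\mathcal{E}^{p-1}\|\mathcal{P}\mu\|^2$ and is absorbed, leaving a $p$-independent condition $\max\{L_5,6C_{d,4}^2L_7\}<2\lambda_1^2-1$. This also matters for your bookkeeping. Step 3 needs $\sup_t\mathbb{E}\|u\|_1^{3p}$, i.e. $\mathbb{E}\,\mathcal{E}^{3p/2}$ with exponent up to $36$, which is beyond the range you tied to \eqref{dissipative for H^1}. Step 1 is fine once done directly in $H$ as in the paper: the drift gives $-\|Au\|^2-3\|u\nabla u\|^2+\|\nabla u\|^2$, and \eqref{dissipative for H} beats $\frac{p-1}{2}L_3$. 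The $H^{-1}$ computation you start with is not needed there.
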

   The proof of Theorem \ref{continous uniform moment} is given in Section~\ref{section-uniform-moment-bounds}, with some technical details deferred to \ref{detailed proof of Theorem 2.4}.

  \begin{theorem}\label{all regularity of full discrete}
  	Let $2\le p\le 8$ and fix $\gamma \in \big(\max\{1, \frac{d}{2}\}, 2\big )$.  Suppose that $u_0\in L^{64} ( \Omega;H^{\gamma})$, Assumptions \ref{diffusion coeffient} and \ref{dissipative condition} hold. Then, for $0<\tau<\min\big\{\tau_0^*,\tau_1^*\big\}$, it holds that
  	\begin{equation*}
  		\begin{aligned}
  			\sup_{m\geq0}\|u_{m}^{N}\|_{L^{p}(\Omega; H^{\gamma })} \le C. 
  		\end{aligned} 		
  	\end{equation*}
  \end{theorem}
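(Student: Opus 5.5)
The proof proceeds in three layers, mirroring what the introduction announces: first uniform $L^p(\Omega;H)$ bounds, then uniform bounds on the discrete energy $\mathcal{E}(u_m^N)$ (hence $H^1$ and, through $\|u\|_{L^4}$ control, the nonlinearity), and finally a bootstrap to $H^\gamma$ through the discrete mild formula \eqref{full mild solution}. Mass is conserved by the scheme, since $A$ kills $\phi_0$ and $G$ takes values in $\dot H^{-1}$, so $\langle u_m^N,\phi_0\rangle=\langle P_Nu_0,\phi_0\rangle$. All estimates therefore split into the mean, which is constant, and a zero-mean part on which $\lambda_1$ gives coercivity.

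\emph{Step 1 ($H$ and $H^{-1}$ moments).} Test \eqref{full-discrete scheme} with $A^{-1}\mathcal{P}u_{m+1}^N$ and use $2\langle a-b,a\rangle=\|a\|^2-\|b\|^2+\|a-b\|^2$. We use $\langle F(u),u\rangle\ge\|u\|_{L^4}^4-\|u\|^2$ and the Poincaré gap $\|u\|_1^2\ge\lambda_1\|\mathcal{P}u\|^2$. The noise term is split as $\langle G(u_m)\triangle_mW,A^{-1}u_{m}\rangle+\langle G(u_m)\triangle_mW,A^{-1}(u_{m+1}-u_m)\rangle$. The first piece is a martingale increment with zero expectation. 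The second is absorbed by $\frac12\|u_{m+1}-u_m\|_{-1}^2$ plus $\tau\|G(u_m)\|_{\mathcal{L}_2^{-1}}^2$. Condition \eqref{dissipative for H} and $\tau<1/\lambda_1^2$ then yield a recursion $(1+c\tau)\mathbb{E}Y_{m+1}\le\mathbb{E}Y_m+C\tau$, hence $\sup_m\mathbb{E}\|u_m^N\|^2\le C$ (interpolating $H$ between $H^{-1}$ and $H^1$). For $p>2$, raise the one-step inequality $Y_{m+1}\le Y_m-c\tau Y_{m+1}+C\tau+\xi_m+\eta_m$ to the power $p/2$ and expand. Cross terms containing the first power of the martingale increment $\xi_m$ vanish, and the higher powers of $\xi_m$ carry factors $\tau^{k/2}$ with $k\ge2$. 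These involve $\mathbb{E}Y_m^{p/2-1}\|G\|^2\tau$, bounded by Young's inequality, and lower moments, which are controlled by the induction hypothesis on $p$ (stepping by $2$). This gives $\sup_m\|u_m^N\|_{L^p(\Omega;H)}\le C$ for all required $p$; the thresholds $\tau_0^*$ enter here.

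\emph{Step 2 (energy).} Test with $A^{-1}\mathcal{P}(u_{m+1}-u_m)$, or equivalently with the chemical potential $w_{m+1}=Au_{m+1}+P_NF(u_{m+1})$. Convexity of $\frac14u^4$ and $\frac12|\nabla u|^2$, with the concave part $-\frac12u^2$ handled by $\|u_{m+1}-u_m\|^2$ (absorbed using $\tau\|\nabla w\|^2$ and Step 1), yields
\[
\mathcal{E}(u_{m+1})-\mathcal{E}(u_m)+\tau\|\nabla w_{m+1}\|^2\le\langle w_{m+1},P_NG(u_m)\triangle_mW\rangle+C\|u_{m+1}-u_m\|^2 .
\]
In the noise term, replace $w_{m+1}$ by $w_m$, which gives a martingale, plus the difference. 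The difference contains $F(u_{m+1})-F(u_m)$. This is estimated with $\|G(u)Q^{1/2}\phi_k\|_{\dot H^{d/4}}\hookrightarrow L^4$ via \eqref{linear growth in L^4}, and with \eqref{linear growth in H^1} for the gradient part; this is the origin of the constants $C_{d,4},L_7,L_5$ in \eqref{dissipative for H^1}. Coercivity $\|\nabla w\|^2\ge\lambda_1\|\mathcal{P}w\|^2$ and $\langle w,u\rangle\gtrsim\mathcal{E}(u)-C$ then produce the dissipative recursion for $\mathcal{E}$. Taking $p$-th powers and running the same induction as in Step 1 gives $\sup_m\mathbb{E}\mathcal{E}(u_m^N)^{p}\le C$ for $p\le16$. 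The Gaussian moment $\mathfrak{m}_{4p}$ arises exactly from $\mathbb{E}|\triangle_mW|^{4p}$ in the highest-order term, and $\tau_1^*$ is fixed here. I expect this step to be the main obstacle. The time-implicit nonlinearity paired against an explicit noise requires controlling $\|u_{m+1}-u_m\|_{L^4}$ by moments of $u_m$ alone, which forces the high moments ($L^{64}$ initial data) and the delicate induction.

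\emph{Step 3 ($H^\gamma$).} From \eqref{full mild solution}, apply $A^{\gamma/2}$ and use the discrete analogues of \eqref{semigroup_1}:
\[
\|A^{\nu}S_{\tau,N}^{j}\|\le C(j\tau)^{-\nu/2}(1+c\tau)^{-j}\quad\text{on }\dot H .
\]
The drift sum is bounded by $\tau\sum_j(j\tau)^{-(\gamma+2)/4}e^{-cj\tau}\|F(u_{i+1})\|_{1}$, which is summable since $\gamma<2$. Here $\|F(u)\|_1\le C(1+\|u\|_{L^\infty}^2)\|u\|_1$; $\|u\|_{L^\infty}$ is controlled by interpolating $H^1$ against $H^\gamma$ with a small power, and then absorbed or handled by a preliminary bootstrap through $H^{\gamma'}$ with $\gamma'$ slightly above $d/2$ via $L^6$ bounds. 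The stochastic sum is treated by the Burkholder inequality for discrete martingales together with \eqref{linear growth in H^1}, since $\gamma-1<1$ and $\sum_j\tau(j\tau)^{-(\gamma-1)/2}e^{-cj\tau}<\infty$. Hölder's inequality with the moments from Step 2 then closes $\sup_m\|u_m^N\|_{L^p(\Omega;H^\gamma)}\le C$ for $p\le8$.
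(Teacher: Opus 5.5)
Your three-layer plan matches the paper's: a low-norm moment bound by a discrete energy test with induction on the moment order (Lemma~\ref{p-moment uniform in H}), energy moments by testing with the chemical potential (Lemma~\ref{uniform estimate of u_m^N in H^1}), and then $H^\gamma$ via the discrete mild formula and Lemma~\ref{semigroup full discrte}. Two steps, however, fail as written.

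\emph{Step 1.} You test with $A^{-1}\mathcal{P}u_{m+1}^N$. This gives $\sup_m$ bounds only for moments of $\|u_m^N\|_{-1}$, together with an exponentially weighted, time-summed bound on $\tau\sum_i\|\nabla u_i^N\|^2$. The interpolation $\|u\|^2\le\|u\|_{-1}\|u\|_1$ would need a pointwise-in-time $H^1$ bound, which you only obtain in Step 2. So the claimed $\sup_m\mathbb{E}\|u_m^N\|^2\le C$ does not follow at that stage, and if Step 2 uses it (as the paper's energy recursion uses $H$-moments), the argument is circular. There are two repairs:
\begin{itemize}
\item Test with $u_{m+1}^N$ in $H$, as the paper does. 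Use $\|Au\|^2\ge\lambda_1^2\|u\|^2-\lambda_1^2\langle u_0,\phi_0\rangle^2$ and $\|\nabla u\|^2\le\frac{1}{2\lambda_1}\|Au\|^2+\frac{\lambda_1}{2}\|u\|^2$ to beat the concave term under \eqref{dissipative for H}.
\item Drop the $H$-claim. Absorb every $\|u\|^2$ term arising in Step 2 into the energy via $\|u\|^2\le\delta\|u\|_{L^4}^4+C_\delta$ and $\|u\|_{L^4}^4\le 8\mathcal{E}(u)+2$.
\end{itemize}
Relatedly, Step 2 does not require controlling anything by $u_m$ alone. The paper writes
\[
\tfrac12\|u_{m+1}^NP_NG(u_m^N)\Delta_mW\|^2\le\tfrac{\tau}{4}\|u_{m+1}^N\|_{L^4}^4+\tfrac{1}{4\tau}\|P_NG(u_m^N)\Delta_mW\|_{L^4}^4
\]
and absorbs the first piece into the implicit dissipation at step $m+1$.

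\emph{Step 3.} The $L^\infty$ route via $\|F(u)\|_1$ fails for $d=3$. The embedding $H^{1+\theta(\gamma-1)}\hookrightarrow L^\infty$ needs $\theta>\frac{1}{2(\gamma-1)}$, and since $\gamma<2$ this forces $\theta>\frac12$. Hence $(1+\|u\|_{L^\infty}^2)\|u\|_1$ grows superlinearly in $\|u\|_\gamma$ and cannot be absorbed into the left-hand side. Your fallback via $L^6$ is the right idea, and it needs no preliminary bootstrap. The kernel $(j\tau)^{-(\gamma+2)/4}$ you wrote corresponds to putting all of $A^{1+\gamma/2}$ on $S_{\tau,N}^j$, i.e.\ Lemma~\ref{semigroup full discrte} with $\nu=1+\frac{\gamma}{2}<2$. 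Then only $\|F(u)\|\le C(1+\|u\|_{L^6}^3)\le C(1+\|u\|_1^3)$ is needed. This closes for every $\gamma<2$ at once, using energy moments of order $\frac{3p}{2}\le 12$, which is exactly the argument in the proof of Theorem~\ref{continous uniform moment}. Your treatment of the stochastic sum via \eqref{linear growth in H^1} is fine. The paper instead uses \eqref{linear growth in H} with the kernel $(j\tau)^{-\gamma/2}$; both are summable.
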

  \begin{proof} 
  	We prove the uniform-in-time $H^1$-moment bounds for the numerical solution $u_{m}^{N}$ in Section \ref{section-uniform-moment-bounds}. For each fixed
  	$\gamma<2$, combining this bound with the smoothing
  	properties for $S_{\tau,N}^m$ in Lemma~\ref{semigroup full discrte}
  	and proceeding as in the proof of
  	Theorem~\ref{continous uniform moment} yields the asserted
  	$H^\gamma$-moment estimate. The constant $C$ is
  	independent of $N$, $\tau$, and $m$, but is generally not uniform
  	as $\gamma\uparrow2$. We omit the repetitive details.
  \end{proof}

  These continuous and discrete stability estimates constitute the main ingredients of the subsequent error analysis and lead to a uniform-in-time strong convergence rate for the fully discrete approximation.
  
   \begin{theorem}\label{main theorem}
  	Let $u_{0} \in L^{144}(\Omega;H^2)$. Suppose that Assumptions \ref{diffusion coeffient} and \ref{dissipative condition} hold. Then, for every
  	$N\in\mathbb N$ and $0<\tau <\min\big\{\tau_0^*,\tau_1^*, \frac{1}{\lambda _{1}^{2}},\frac{\lambda_{1}^{2}-\lambda_{1}-5L_{2}}{144L_2^2(\lambda_{1}^{2}+6)}\big\} $, there exists a constant $C>0$ such that 
  	\begin{equation*}
  		 \mathbb{E}[\|u(t_{m})-u_{m}^N\|^2  ]\le C\big(\lambda _{N+1}^{-2}+\tau\big). 
  	\end{equation*}
  \end{theorem}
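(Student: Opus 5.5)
The plan is to split the error through the spectral Galerkin semi-discretization and then work with an error equation in the $H^{-1}$-type norm, where the Cahn--Hilliard drift is monotone up to a lower-order term. Mass is exactly conserved: $P_N$ keeps $\phi_0$ and the noise has zero mean, so $\langle u(t_m)-u_m^N,\phi_0\rangle = \langle u_0-P_Nu_0,\phi_0\rangle=0$. Hence $e_m:=P_Nu(t_m)-u_m^N\in\dot H$, and $\|A^{-1/2}\cdot\|$ is a norm on it. Write $u(t_m)-u_m^N=(I-P_N)u(t_m)+e_m$. By \eqref{projection_operator} and Theorem~\ref{continous uniform moment} with $\gamma$ close to $2$, or directly with $H^2$ data, the first term satisfies $\mathbb E\|(I-P_N)u(t_m)\|^2\le C\lambda_{N+1}^{-2}$ uniformly in $m$. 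It therefore remains to bound $\mathbb E\|e_m\|^2$ uniformly in $m$.

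The first step is to project the exact equation onto $H_N$ and integrate over $[t_m,t_{m+1}]$. Subtracting \eqref{full-discrete scheme} gives
\begin{equation*}
e_{m+1}-e_m+\tau A^2e_{m+1}+\tau AP_N\big(F(u(t_{m+1}))-F(u_{m+1}^N)\big)=R_m+P_N\big(G(u(t_m))-G(u_m^N)\big)\triangle_mW ,
\end{equation*}
where $R_m$ collects two kinds of terms. The deterministic consistency terms are $\int_{t_m}^{t_{m+1}}A^2(u(t_{m+1})-u(s))\,ds$ and $\int_{t_m}^{t_{m+1}}AP_N(F(u(t_{m+1}))-F(u(s)))\,ds$. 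The stochastic term is $\int_{t_m}^{t_{m+1}}P_N(G(u(s))-G(u(t_m)))\,dW(s)$. Next I test with $A^{-1}e_{m+1}$ and use $2\langle a-b,a\rangle=\|a\|^2-\|b\|^2+\|a-b\|^2$ in the $\dot H^{-1}$ norm. This yields the dissipative term $\tau\|A^{1/2}e_{m+1}\|^2$. For the nonlinear term I split
\begin{equation*}
F(u(t_{m+1}))-F(u^N_{m+1})=\big[F(u(t_{m+1}))-F(P_Nu(t_{m+1}))\big]+\big[F(P_Nu(t_{m+1}))-F(u_{m+1}^N)\big].
\end{equation*}
The second bracket, tested against $e_{m+1}$, is at least $-\|e_{m+1}\|^2$ by the one-sided Lipschitz property of $f$. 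The bound $\|e\|^2\le\|A^{1/2}e\|\,\|A^{-1/2}e\|$ then absorbs it into the dissipation, leaving $C\tau\|e_{m+1}\|_{-1}^2$. The first bracket is bounded using the $L^\infty$ moments from Theorems~\ref{continous uniform moment} and \ref{all regularity of full discrete} (via $H^\gamma\hookrightarrow L^\infty$) together with \eqref{projection_operator}, giving $O(\tau\lambda_{N+1}^{-2})$ after Hölder in $\omega$; this is where the high moments $L^{144}$ are used. The consistency terms are handled with the temporal Hölder estimate \eqref{time regularity} and the $H^2$ moment bounds, and contribute $O(\tau^2)$ per step in the $H^{-1}$ pairing. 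The noise terms are treated as follows. The martingale part has zero expectation after conditioning, and the quadratic part is controlled by \eqref{global Lipschitz in H^-1}, giving $\tau L_2\|e_m\|_{-1}^2+C\tau\lambda_{N+1}^{-2}$ together with $\tau\,\mathbb E\|u(t_m)-u(s)\|^2\lesssim \tau^2$.

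The result is a recursion of the form
\begin{equation*}
(1+c\tau)\,\mathbb E\|e_{m+1}\|_{-1}^2\le (1+L_2\tau+\cdots)\,\mathbb E\|e_m\|_{-1}^2+C\tau(\lambda_{N+1}^{-2}+\tau).
\end{equation*}
Condition \eqref{dissipative for H}, $5L_2<\lambda_1(\lambda_1-1)$, together with the step restriction $\tau<(\lambda_1^2-\lambda_1-5L_2)/(144L_2^2(\lambda_1^2+6))$, makes the contraction factor strictly less than $1$ uniformly in $\tau$. Iterating a discrete Gronwall argument with geometric decay then gives $\sup_m\mathbb E\|e_m\|_{-1}^2\le C(\lambda_{N+1}^{-2}+\tau)$, and also a uniform bound on $\tau\sum_j \rho^{m-j}\mathbb E\|e_j\|_1^2$.

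The main obstacle is upgrading this from $H^{-1}$ to $H$ without losing uniformity in time. The convergence rate is stated in $\|\cdot\|$, so I would perform a second energy estimate by testing the error equation with $e_{m+1}$ itself. The nonlinear term then gives $\langle A(F(\cdot)-F(\cdot)),e_{m+1}\rangle$, which I bound by
\begin{equation*}
\varepsilon\|Ae_{m+1}\|^2+C(1+\|u\|_{L^\infty}^4+\|u_{m+1}^N\|_{L^\infty}^4)\|e_{m+1}\|^2 .
\end{equation*}
Interpolating $\|e\|^2\le\|e\|_{-1}\|e\|_1$ and using the weighted sum of $\|e_j\|_1^2$ from the $H^{-1}$ step controls the right-hand side. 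The random $L^\infty$ weight is handled by Hölder in $\omega$ against the high moment bounds, which again requires the $L^{144}$ data. The step condition $\tau<\lambda_1^{-2}$ and the contraction from $\lambda_1^2$ keep the resulting recursion uniform in $m$, which yields $\mathbb E\|e_m\|^2\le C(\lambda_{N+1}^{-2}+\tau)$.
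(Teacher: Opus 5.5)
There is a genuine gap. It sits in the linear consistency residual $\int_{t_m}^{t_{m+1}}A^2P_N\big(u(t_{m+1})-u(s)\big)\,ds$ contained in your $R_m$.

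Tested with $A^{-1}e_{m+1}$, this residual equals $\int_{t_m}^{t_{m+1}}\langle A^{1/2}(u(t_{m+1})-u(s)),A^{1/2}e_{m+1}\rangle\,ds$. After absorbing $\varepsilon\tau\|A^{1/2}e_{m+1}\|^2$ you therefore need $\int_{t_m}^{t_{m+1}}\mathbb E\|u(t_{m+1})-u(s)\|_1^2\,ds=O(\tau^2)$, i.e.\ temporal H\"{o}lder regularity of order $\frac12$ in $H^1$. The tools you cite do not give this. \eqref{time regularity} is only in $H$, and interpolating it with the $H^\gamma$ bounds ($\gamma<2$) of Theorem~\ref{continous uniform moment} yields at best $\mathbb E\|u(t)-u(s)\|_1^2\lesssim (t-s)^{1/2}$. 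That is $O(\tau^{3/2})$ per step, not the $O(\tau^2)$ you claim. Summing against the geometric weights $\rho^{m-j}$ with $1-\rho\sim\tau$ then gives only $\sup_m\mathbb E\|e_m\|_{-1}^2\lesssim\lambda_{N+1}^{-2}+\tau^{1/2}$, half the claimed order. Order $\frac12$ in $H^1$ would need a separate $H^3$-type regularity analysis of $u$, which is not available here and is borderline for the stochastic convolution.

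Your $H$-step is worse, and your sketch does not treat $R_m$ there at all. Tested with $e_{m+1}$, the same residual becomes $\langle A(u(t_{m+1})-u(s)),Ae_{m+1}\rangle$, which requires $\mathbb E\|u(t)-u(s)\|_2^2\lesssim t-s$. With only $G(u)\in\mathcal L_2^1$, the stochastic convolution lies in $H^{3-\epsilon}$, and its increments are in general only of order $(t-s)^{1/4}$ in $L^2(\Omega;H^2)$. An $H$-energy estimate that carries consistency residuals therefore cannot produce $\mathbb E\|e_m\|^2\lesssim\tau$.

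The paper avoids forming this residual altogether. It compares with the auxiliary process $\tilde u_m^N$ in \eqref{auxiliary full mild solution}, so the linear consistency error is confined to $u(t_m)-\tilde u_m^N$. That term is estimated at the operator level through $E_{\tau,N}(t)=S(t)-S_{\tau,N}^{m+1}$ (Lemma~\ref{error of semigrroup}). There the smoothing of the semigroup compensates for the missing regularity, using only $H^2$ data, the $H^\gamma$ moment bounds and $H$-valued time regularity. The remaining error $e=\tilde u-u^N$ solves \eqref{error full-discrete scheme}, which contains only nonlinear and noise differences, so an $H^{-1}$ energy argument like yours works for it.

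The upgrade from $H^{-1}$ to $H$ is then done through the discrete mild representation of $e_{m+1}$ and the smoothing of $S_{\tau,N}^m$ (Lemma~\ref{semigroup full discrte}), not by a second energy estimate. It uses two inputs:
\begin{itemize}
\item the fourth-moment bound of Lemma~\ref{error for full discrete in H^-1 space};
\item the second moment of the weighted dissipation sum \eqref{middle process in error estimate}.
\end{itemize}
These higher moments are exactly what the H\"{o}lder-in-$\omega$ step against the random $L^\infty$ weights needs, and a second-moment recursion like yours does not supply them. The $144L_2^2$ step restriction you quote also comes from that fourth-moment recursion, not from a second-moment one.

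A smaller point: $\mathbb E\|(I-P_N)u(t_m)\|^2\le C\lambda_{N+1}^{-2}$ requires a uniform $H^2$ bound. Theorem~\ref{continous uniform moment} only gives $H^\gamma$ with $\gamma<2$, hence only $\lambda_{N+1}^{-\gamma}$.
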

  The proof of Theorem~\ref{main theorem} is given in
  Section~\ref{strong convergence}.

   \section{Uniform-in-time moment estimates}\label{section-uniform-moment-bounds}
  
  \subsection{Uniform moment bounds for the exact solution}
  In this subsection, we establish the uniform-in-time moment bounds for the exact solution $u(t)$. We first combine the It\^o formula with the
  dissipative structure of the Cahn--Hilliard equation to derive uniform
  moment bounds in $H$. To obtain the higher spatial estimates required
  in Theorem~\ref{continous uniform moment}, we introduce the shifted
  energy functional
  \begin{equation}\label{energy func}
  	\begin{aligned}
  		\mathcal{E}(u(t))=\int _{\mathcal{D}}\Big [ \frac{1}{2}\left | \nabla u(t,x)\right |^2 +\frac{1}{4}\left(u^2(t,x)-1\right)^2 \Big ]dx+C^{\ast },
  	\end{aligned}
  \end{equation}
  where $C^{\ast }$ is chosen sufficiently large to ensure that $\|G(u(t))\|_{\mathcal{L}_{2}^{0}}^2\le \frac{1}{(p-1)\lambda_{1}}\mathcal{E} (u(t))$. 
  We then establish uniform-in-time estimates for
  $\mathcal{E}(u(t))$, which constitute a key ingredient in the proof of
  Theorem~\ref{continous uniform moment}.

    \begin{lemma}\label{uniform boundness in L^2}
    	Let $2 \le p \le 72$ and $u_0 \in L^{p}(\Omega;H)$. Suppose that Assumptions \ref{diffusion coeffient} and \ref{dissipative condition} hold. Then the mild solution $u(t)$ for Eq.\eqref{Cahn-Hilliard equation} satisfies 
    	\begin{equation*}
    		\begin{aligned}
    			\sup_{t\geq0}\left \| u(t) \right \|_{L^p(\Omega; H)} \le C.
    		\end{aligned}	
    	\end{equation*}

    \end{lemma}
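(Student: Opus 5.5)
The plan is to apply the It\^o formula to $\|u(t)\|^p$ and to exploit two facts: the drift $A(Au+F(u))$ is dissipative on the zero-mean part of $u$, and the mean of $u$ is conserved. Since $G(u)\,dW$ takes values in $\dot H^{-1}$ and $\langle A(\cdot),\phi_0\rangle=0$, the spatial mean $\langle u(t),\phi_0\rangle=\langle u_0,\phi_0\rangle=:\mu_0$ is constant in time. Hence
\[
\|u(t)\|^2=\|\mathcal P u(t)\|^2+\mu_0^2,
\]
with $\mu_0$ an $\mathcal F_0$-measurable random variable satisfying $\mathbb E|\mu_0|^p\le \mathbb E\|u_0\|^p<\infty$. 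Because $u$ is the strong (variational) solution of Scarpa~\cite{scarpa18}, the It\^o formula in the Gelfand triple is available. To make all expectations finite, I would localize with stopping times $\tau_R=\inf\{t:\|u(t)\|\ge R\}$ and let $R\to\infty$ at the end by Fatou's lemma.

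\textbf{Step 1: the drift estimate.} Using $\langle A(Au+F(u)),u\rangle=\|Au\|^2+\langle \nabla F(u),\nabla u\rangle$ and
\[
\langle \nabla F(u),\nabla u\rangle=\int_{\mathcal D}(3u^2-1)|\nabla u|^2\,dx\ge -\|\nabla u\|^2,
\]
together with the spectral inequalities $\|Au\|^2\ge\lambda_1\|\nabla u\|^2$ and $\|\nabla u\|^2\ge \lambda_1\|\mathcal P u\|^2$, we get
\[
-2\langle A(Au+F(u)),u\rangle\le -2(\lambda_1-1)\|\nabla u\|^2\le -2\lambda_1(\lambda_1-1)\|\mathcal P u\|^2 .
\]

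\textbf{Step 2: It\^o formula for the $p$-th power.} Write
\[
d\|u\|^p=\tfrac p2\|u\|^{p-2}\,d\|u\|^2+\tfrac{p(p-2)}{2}\|u\|^{p-4}\sum_k\langle u,G(u)Q^{1/2}\phi_k\rangle^2\,dt .
\]
I bound the quadratic-variation term by $\|u\|^2\|G(u)\|_{\mathcal L_2^0}^2$ and then use \eqref{linear growth in H}. This yields
\[
d\|u\|^p\le p\|u\|^{p-2}\Big[-\lambda_1(\lambda_1-1)\|\mathcal P u\|^2+\tfrac{p-1}{2}\big(L_3\|\mathcal P u\|^2+L_3\mu_0^2+L_4\big)\Big]dt+dM_t,
\]
where $M_t$ is a local martingale. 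For $p\le 72$ we have $\tfrac{p-1}{2}L_3\le \tfrac{71}{2}L_3<59L_3<\lambda_1(\lambda_1-1)$ by \eqref{dissipative for H}, so the net coefficient of $\|\mathcal P u\|^2$ is $-c$ for some $c>0$.

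\textbf{Step 3: recovering dissipation in the full norm.} Substituting $\|\mathcal P u\|^2=\|u\|^2-\mu_0^2$ gives
\[
d\|u\|^p\le \big(-cp\|u\|^p+C\|u\|^{p-2}(\mu_0^2+1)\big)dt+dM_t .
\]
By Young's inequality, $C\|u\|^{p-2}(1+\mu_0^2)\le \tfrac{cp}{2}\|u\|^p+C(1+|\mu_0|^p)$. Taking expectations and applying Gronwall's inequality then gives
\[
\mathbb E\|u(t)\|^p\le e^{-cpt/2}\,\mathbb E\|u_0\|^p+C\big(1+\mathbb E|\mu_0|^p\big),
\]
which is uniform in $t$.

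The main obstacle is precisely this mean component. The dissipation acts only on $\mathcal P u$, while the noise bound \eqref{linear growth in H} involves the full norm $\|u\|$, so there is no direct $-c\|u\|^p$ term from the drift. The way around it is to use mass conservation to turn $\mu_0$ into a time-independent random constant with finite $p$-th moment. A secondary technical point is to justify the It\^o formula in the variational setting. The localization must also be handled carefully, because the Gronwall step needs finite expectations before passing to the limit $R\to\infty$.
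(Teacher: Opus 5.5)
Your proposal is correct and follows essentially the same route as the paper: It\^o's formula for $\|u\|^p$, conservation of $\langle u,\phi_0\rangle$, discarding the nonpositive term $-3p\|u\|^{p-2}\|u\nabla u\|^2$, the drift constant $\lambda_1(\lambda_1-1)$, the linear growth bound \eqref{linear growth in H}, and closing with $\frac{p-1}{2}L_3<\lambda_1(\lambda_1-1)$ from \eqref{dissipative for H}; the paper obtains that drift constant via $\|\nabla u\|^2\le\frac{1}{2\lambda_1}\|Au\|^2+\frac{\lambda_1}{2}\|u\|^2$ together with $\|Au\|^2\ge\lambda_1^2\|u\|^2-\lambda_1^2\langle u_0,\phi_0\rangle^2$, instead of your chain of spectral inequalities. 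The one technical difference is that the paper applies the product rule to $e^{\gamma_1 t}\|u(t)\|^p$ before taking expectations, rather than invoking Gronwall; this is the clean way to handle the localization issue you flagged, because a Gronwall argument run directly on the stopped quantity $\mathbb E\|u(t\wedge\tau_R)\|^p$ needs either this weighting or an extra argument.
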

    \begin{proof}
    Applying the It\^o formula to $\|u\|^p$, we obtain
    \begin{equation}\label{Ito formula for u^p}
    	\begin{aligned}
    		d\|u\|^p
    		=& -p \|u\|^{p-2} \|Au\|^2 dt
    		- p \|u\|^{p-2} \langle u, AF(u) \rangle dt 
    		+ \frac{p}{2} \|u\|^{p-2} \|G(u)\|_{\mathcal{L}_2^0}^2 dt\\
    		&+ \frac{p(p-2)}{2} \|u\|^{p-4}
    		\sum_{k=1}^\infty \langle u, G(u)Q^{\frac{1}{2}}\phi_k \rangle^2 dt+ p \|u\|^{p-2} \langle u, G(u)dW \rangle \\
    		=&:I_1+I_2+I_3+I_4+I_5.
    	\end{aligned}
    \end{equation}
    Taking the inner product in $H$ on both sides of Eq.\eqref{Cahn-Hilliard equation} with $\phi _{0}$, we obtain $\left \langle u(t), \phi_{0} \right \rangle=\left \langle u_0, \phi _{0}\right \rangle$. Consequently, for the term $I_1$, it holds that
    \begin{equation*}
    	\begin{aligned}
    		&-\|A u\|^2 \le -\lambda_1^2 \|u\|^2+\lambda_1^2\left \langle u_0, \phi _{0}\right \rangle^2, \\
    		&I_1\le -p\lambda_1^2\|u\|^{p}+p\lambda_1^2\left \langle u_0, \phi _{0}\right \rangle^2\|u\|^{p-2}
    		\le \left ( -p\lambda_1^2+\varepsilon\right )\|u\|^{p}+C\left (p,\varepsilon \right) \left \langle u_0, \phi _{0}\right \rangle^{p}.
    	\end{aligned}
    \end{equation*}	 
    For the term $I_2$, by using the integration by parts formula yields
    \begin{equation*}
    	\begin{aligned}
    	    I_2&= -3p \|u\|^{p-2} \|u \nabla u\|^2dt
    		+ p \|u\|^{p-2} \|\nabla u\|^2dt.
    	\end{aligned}
    \end{equation*}	
    Furthermore, for the term $I_3$ and $I_4$, applying \eqref{linear growth in H} from Assumption \ref{diffusion coeffient} gives
    \begin{equation*}
    	\begin{aligned}
    		I_3 &\le \frac{p}{2} \|u\|^{p-2} (L_{3}\|u\|^2+L_{4})dt
    		\le \frac{p}{2}L_{3}\|u\|^p dt+\frac{p}{2}L_{4}\|u\|^{p-2}dt,\\
    		I_4 &\le \frac{p(p-2)}{2} \|u\|^{p-2}\|G(u)\|_{\mathcal{L}_2^0}^2 dt
    		\le \frac{p(p-2)}{2}L_{3}\|u\|^pdt +\frac{p(p-2)}{2}L_{4}\|u\|^{p-2}dt.
    	\end{aligned}
    \end{equation*}
    For $\gamma_{1}>0$, applying the product rule to $e^{\gamma_1 t}\|u(t)\|^p$ and integrating over $[0,t]$, we obtain
    \begin{equation}\label{integral of Ito formula for u^p}
    	\begin{aligned}
    		e^{\gamma_{1}t}\|u(t)\|^p-\|u_{0}\|^p=&-p\int_{0}^{t}e^{\gamma_{1}r}\|u(r)\|^{p-2}\left [\|Au(r)\|^2 +\langle u(r), AF(u(r)) \rangle\right ]dr \\
    		&+\gamma_{1}\int_{0}^{t}e^{\gamma_{1}r}\|u(r)\|^{p}dr+ \frac{p}{2}\int_{0}^{t}e^{\gamma_{1}r}\|u(r)\|^{p-2}\|G(u(r))\|_{\mathcal{L}_2^0}^2dr\\
    		&+ \frac{p(p-2)}{2}\int_{0}^{t}e^{\gamma_{1}r}\|u(r)\|^{p-4}\sum_{k=1}^\infty \langle u(r), G(u(r))Q^{\frac{1}{2}}\phi_k \rangle^2dr \\
    		&+p\int_{0}^{t}e^{\gamma_{1}r}\|u(r)\|^{p-2}\langle u(r), G(u(r))dW(r) \rangle.
    	\end{aligned}  	
    \end{equation}
    Taking expectations on both sides of \eqref{integral of Ito formula for u^p}, and using the classical localization argument based on stopping times, see ~\cite{doob1953}, we obtain
    \begin{equation*}
    	\mathbb{E}\Big[\int_{0}^{t}e^{\gamma_{1}r}\|u(r)\|^{p-2}\langle u(r), G(u(r))dW(r) \rangle \Big ]=0.
    \end{equation*}
     Utilizing $\|\nabla u\|^2 \le \frac{1}{2\lambda_{1}} \|A u\|^2 + \frac{\lambda_{1}}{2} \|u\|^2$, and combining the above inequalities into \eqref{integral of Ito formula for u^p}, we obtain
    \begin{equation*}
    	\begin{aligned}
    		\mathbb{E}[\|u(t)\|^p]\le& e^{-\gamma_{1}t}\mathbb{E}[\|u_0\|^p]-\gamma_{1}^{\ast}
    		\int_{0}^{t}e^{-\gamma_{1}(t-r)}\mathbb{E}[\|u(r)\|^{p}]dr\\
    		&+C(p,\varepsilon)\mathbb{E}[\left \langle u_0,\phi_0 \right \rangle ^{p}]\int_{0}^{t}e^{-\gamma_{1}(t-r)}dr
    		+C(p,L_4,\varepsilon)\int_{0}^{t}e^{-\gamma_{1}(t-r)}dr,
    	\end{aligned}
    \end{equation*}	
    with $\gamma_{1}^{\ast}:=p\big(\lambda_{1}^{2}-\lambda_{1}-\frac{p-1}{2}L_{3} \big)-2\varepsilon-\gamma _1$. 
    By \eqref{dissipative for H} from Assumption \ref{dissipative condition}, we can choose $\varepsilon$ and $\gamma_{1}$ to be sufficiently small such that $\gamma_{1}^{\ast}>0$. 
    Therefore, it follows that 
    \begin{equation*}
    	\mathbb{E}[\|u(t)\|^p]\le C.
    \end{equation*}
    The proof is complete.
    	 
    \end{proof}

   To derive uniform-in-time moment bounds for the exact solution $u(t)$ in $H^1$, we consider the shifted energy functional $\mathcal{E}$ defined in \eqref{energy func}. A direct calculation shows that, for $u,v,w\in H^1$,
    \begin{equation}\label{frechet deri}
    	\left \langle D\mathcal{E}(u), v \right \rangle =\left \langle Au+F(u),v \right \rangle ,\quad  D^2\mathcal{E}(u)(v,w)=\left \langle \nabla v, \nabla w \right \rangle +\left \langle {f}'(u)v,w  \right \rangle,
    \end{equation}
    where $D\mathcal{E}(u)$ and $D^2\mathcal{E}(u)$ denote the first and second-order Fr\'{e}chet derivatives of $\mathcal{E}$ at $u$.

    \begin{lemma}\label{uniform estimate of u(t) in H^1}
    	Let $2 \le p \le 72$ and $u_0 \in L^{2p}(\Omega;H^1)$. Suppose Assumptions \ref{diffusion coeffient} and \ref{dissipative condition} hold. Then there exists a positive constant $C$ such that
    	\begin{equation}
    		\sup_{t\geq0}\mathbb{E}\big [ \left ( \mathcal{E}(u(t)) \right )^{\frac{p}{2}}  \big ]\le C .
    	\end{equation}
    \end{lemma}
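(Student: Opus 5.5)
We apply the It\^o formula to $\mathcal{E}(u(t))$, using the Fr\'echet derivatives \eqref{frechet deri}, and then to $(\mathcal{E}(u(t)))^{p/2}$ weighted by $e^{\gamma_2 t}$, in the same way as in Lemma~\ref{uniform boundness in L^2}.

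\textbf{It\^o formula for $\mathcal{E}$.} Write $\mu:=Au+F(u)$ for the chemical potential. Since $\langle D\mathcal{E}(u),-A\mu\rangle=-\|\nabla\mu\|^2$, we get
\begin{equation*}
d\mathcal{E}(u)=-\|\nabla \mu\|^2dt+\frac12\sum_{k\ge1}\Big(\|\nabla G(u)Q^{\frac12}\phi_k\|^2+\langle f'(u)G(u)Q^{\frac12}\phi_k,G(u)Q^{\frac12}\phi_k\rangle\Big)dt+\langle \mu,G(u)dW\rangle .
\end{equation*}
\emph{Dissipation.} Because $G(u)$ takes values in zero-mean spaces, only $\mathcal{P}\mu$ enters the noise, and Poincar\'e gives $\|\nabla\mu\|^2\ge\lambda_1\|\mathcal{P}\mu\|^2$. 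Next, $\langle \mathcal{P}\mu,u-\bar u\rangle=\|\nabla u\|^2+\langle F(u),u-\bar u\rangle$, and the mean $\bar u$ is conserved and controlled by $\langle u_0,\phi_0\rangle$. Young's inequality then yields $\|\mathcal{P}\mu\|^2\ge c\,\mathcal{E}(u)-C(1+|\langle u_0,\phi_0\rangle|^{4})$. More directly, I would use $\|\nabla\mu\|^2\ge\lambda_1^2\|\nabla u\|^2$-type bounds minus lower-order terms to extract a term $-(\lambda_1^2-1)\|\nabla u\|^2$ and a quartic term. This is what makes the constant $\lambda_1^2-1$ appear in \eqref{dissipative for H^1}.

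\emph{It\^o correction terms.} The gradient part is bounded by \eqref{linear growth in H^1}, $\frac12(L_5\|u\|_1^2+L_6)$. The term involving $f'(u)=3u^2-1$ is the delicate one. By H\"older, it is at most $3\|u\|_{L^4}^2\sum_k\|G(u)Q^{1/2}\phi_k\|_{L^4}^2$. By the embedding $H^{d/4}\hookrightarrow L^4$ and \eqref{linear growth in L^4}, this is at most $3C_{d,4}^2\|u\|_{L^4}^2(L_7\|u\|_{L^4}^2+L_8)$, which is controlled by $C_{d,4}^2L_7\int(u^2-1)^2$ plus lower-order terms. This explains the factor $4C_{d,4}^2L_7$ next to $L_5$ in \eqref{dissipative for H^1}. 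Altogether we obtain
\begin{equation*}
\mathcal{L}\mathcal{E}\le-\kappa\,\mathcal{E}(u)+C\big(1+\|u\|^{2}+\langle u_0,\phi_0\rangle^4\big),\qquad \kappa>0 .
\end{equation*}

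\textbf{Moments.} Apply It\^o to $e^{\gamma_2 t}\mathcal{E}^{p/2}$. The extra term is $\frac{p}{2}(\frac p2-1)\mathcal{E}^{p/2-2}\sum_k\langle\mu,G(u)Q^{1/2}\phi_k\rangle^2$. Bound it using $\langle\mu,g\rangle=\langle\nabla u,\nabla g\rangle+\langle F(u),g\rangle$ together with Cauchy--Schwarz, \eqref{linear growth in H^1} and \eqref{linear growth in L^4}. This gives a bound of order $p\,\mathcal{E}^{p/2-1}(L_5+C_{d,4}^2L_7)\mathcal{E}$ plus lower order terms. This is the source of the $p$-dependent coefficient $20p-29$. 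The Gaussian moment term $\mathfrak{m}_{4p}$ suggests that the authors bound the martingale/quartic contribution through a Gaussian moment. I would instead handle it by Young's inequality, absorbing it into the dissipation under \eqref{dissipative for H^1}. Lower-order terms $\mathcal{E}^{p/2-1}$ are absorbed by Young with small $\varepsilon$. Terms involving $\|u\|^{p}$ and $\langle u_0,\phi_0\rangle$ are bounded uniformly by Lemma~\ref{uniform boundness in L^2}, which is valid for $p\le72$. The stochastic integral has zero expectation after localization by stopping times. Finally, $\mathcal{E}(u_0)\in L^{p/2}(\Omega)$ follows from $u_0\in L^{2p}(\Omega;H^1)$ and $H^1\hookrightarrow L^4$. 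A Gronwall-type inequality with exponential weight then gives $\sup_t\mathbb{E}[\mathcal{E}^{p/2}]\le C$.

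\textbf{Main obstacle.} The hardest step is controlling the It\^o correction $\langle f'(u)G(u)Q^{1/2}\phi_k,G(u)Q^{1/2}\phi_k\rangle$ and the quadratic variation of $\langle \mu,G(u)dW\rangle$. Both are quartic in $u$ and must be absorbed by the $\|\nabla\mu\|^2$ dissipation, which is only available through Poincar\'e and the quartic part of $\mathcal{E}$. This requires tracking constants sharply, which is exactly what \eqref{dissipative for H^1} encodes. If the direct absorption fails, I would first try to split off the $F(u)$ component of $\mu$ and control it via $\|\nabla\mu\|^2\ge\lambda_1\|\mathcal{P}\mu\|^2$ together with $\langle F(u),u\rangle\ge\|u\|_{L^4}^4-\|u\|^2$.
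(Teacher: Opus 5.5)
Your argument is correct in outline, but it handles the key term differently from the paper. Both proofs get the dissipation in essentially the same way. Your first route uses $\|\nabla\mu\|^2\ge\lambda_1\|\mathcal{P}\mu\|^2$ plus a coercive lower bound for $\mathcal{P}\mu$; the paper writes that bound as \eqref{estimate of derivative Eu}. Both give about $2\lambda_1^2\mathcal{E}$, and both bound the $D^2\mathcal{E}$ correction the same way.

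The difference is the quadratic-variation term $\frac{q(q-1)}{2}\mathcal{E}^{q-2}\sum_k\langle\mu,g_k\rangle^2$, with $q=p/2$ and $g_k:=G(u)Q^{1/2}\phi_k$.
\begin{itemize}
\item \textbf{Paper.} It bounds the sum by $\|\mathcal{P}\mu\|^2\|G(u)\|_{\mathcal{L}_2^0}^2$. It then chooses the shift $C^*$ in \eqref{energy func}, depending on $p$, so large that $L_3\|u\|^2+L_4\le\mathcal{E}(u)/((p-1)\lambda_1)$. This is possible because $\mathcal{E}$ is quartic in $u$ while $\|G(u)\|_{\mathcal{L}_2^0}^2$ is only quadratic. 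The term becomes $\frac{p}{2\lambda_1}\mathcal{E}^{p-1}\|\mathcal{P}\mu\|^2$ and is absorbed by the dissipation $p\lambda_1\mathcal{E}^{p-1}\|\mathcal{P}\mu\|^2$. So the continuous proof needs only the $p$-independent condition $\max\{L_5,6C_{d,4}^2L_7\}<2\lambda_1^2-1$.
\item \textbf{Yours.} You bound $\sum_k\langle\mu,g_k\rangle^2\le 2\|\nabla u\|^2\|G(u)\|_{\mathcal{L}_2^1}^2+2\|F(u)\|_{L^{4/3}}^2\sum_k\|g_k\|_{L^4}^2\le 8(L_5+4C_{d,4}^2L_7+\varepsilon)\mathcal{E}^2+\text{l.o.t.}$ This is exactly what the paper does for $J_6^2$ in Lemma \ref{uniform estimate of u_m^N in H^1}. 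There the shift trick is not directly available, because the dissipation is evaluated at $u_{m+1}$ while the martingale term involves $u_m$.
\end{itemize}

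Your route needs no $p$-dependent shift, but it pays with a $p$-dependent smallness condition. The term has size $4q(q-1)(L_5+4C_{d,4}^2L_7)\mathcal{E}^q$. That is of order $p^2\mathcal{E}^{p/2}$, not $p\,\mathcal{E}^{p/2}$ as you wrote. Against the dissipation $2q\lambda_1^2\mathcal{E}^q$ you need $2(p-2)(L_5+4C_{d,4}^2L_7)+\max\{L_5,6C_{d,4}^2L_7\}<2\lambda_1^2$, which is a factor of about $140$ at $p=72$. This does follow from \eqref{dissipative for H^1}: its $p=16$ instance gives $291(L_5+4C_{d,4}^2L_7)<\lambda_1^2-1$. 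You should verify this explicitly rather than assert that (ii) ``encodes'' it.

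One small advantage of your version: you work with $\mathcal{E}^{p/2}$, which matches the hypothesis $u_0\in L^{2p}(\Omega;H^1)$. The paper's proof runs the argument for $\mathcal{E}^p$.

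Two asides in your proposal should be dropped.
\begin{itemize}
\item The ``more direct'' extraction of $\lambda_1^2\|\nabla u\|^2$ from $\|\nabla\mu\|^2$ does not work as sketched. Expanding $\|A^{1/2}(Au+F(u))\|^2$ produces the cross term $2\langle Au,AF(u)\rangle$, which contains $12\int u|\nabla u|^2\Delta u$. That term is neither signed nor controlled by $\mathcal{E}$ without $L^\infty$ information; only the $\mathcal{P}\mu$/Poincar\'e route closes.
\item In the continuous problem there is no Gaussian-moment term to handle. The quantity $\mathfrak{m}_{4p}$, the factor $20p-29$ and the constant $\lambda_1^2-1$ in \eqref{dissipative for H^1} come from the discrete estimates (the $J_3$ and $J_6$ terms), not from this lemma.
\end{itemize}
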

    \begin{proof}
    	By applying the It\^{o} formula to  $\left ( \mathcal{E}(u(t)) \right )^p$ and using the properties in \eqref{frechet deri}, we have
    	\begin{equation}\label{ito_enengy}
    		\begin{aligned}
    			d\left (\mathcal{E} (u(t)) \right )^p=&-p\left (\mathcal{E} (u(t)) \right )^{p-1}\big \|A^{\frac{1}{2}}(Au(t)+F(u(t)))\big \|^2dt\\
    			&+p\left (\mathcal{E} (u(t)) \right )^{p-1}\left \langle Au(t)+F(u(t)),G(u(t))dW(t) \right \rangle\\
    			&+\frac{p}{2}\left (\mathcal{E} (u(t)) \right )^{p-1} \sum_{k=1}^{\infty}D^2\mathcal{E}(u(t))\big ( G(u(t))Q^{\frac{1}{2}}\phi_{k}, G(u(t))Q^{\frac{1}{2}}\phi_{k}\big)dt\\
    			&+\frac{p(p-1)}{2}\left (\mathcal{E} (u(t)) \right )^{p-2}\sum_{k=1}^{\infty}\big \langle Au(t)+F(u(t)),G(u(t))Q^{\frac{1}{2}}\phi_{k}\big \rangle^{2}dt\\
    			=:&I_1+I_2+I_3+I_4.
    		\end{aligned}
    	\end{equation}
    	For the first term $I_1$,  we can easily obtain 
    	\begin{equation*}
    		\begin{aligned}
    			I_1 \le -p\lambda _{1}\left (\mathcal{E} (u(t)) \right )^{p-1}\|Au(t)+F(u(t))\|_{\dot{H}}^2dt.
    		\end{aligned}
    	\end{equation*}
    	Let $C_{d,4}>0$ be the Sobolev embedding constant from $H^{\frac{d}{4}}\hookrightarrow L^4$.
    	For the term $I_3$, through \eqref{linear growth in H^1} and \eqref{linear growth in L^4} in Assumption \ref{diffusion coeffient} and the H\"{o}lder inequality, we get
    		\begin{align*}
    			 I_3=&\frac{p}{2}\left (\mathcal{E} (u(t)) \right )^{p-1} \sum_{k=1}^{\infty}D^2\mathcal{E}(u(t))\big ( G(u(t))Q^{\frac{1}{2}}\phi_{k}, G(u(t))Q^{\frac{1}{2}}\phi_{k}\big)dt\\
    			=&\frac{p}{2}\left (\mathcal{E} (u(t)) \right )^{p-1} \sum_{k=1}^{\infty}\left ( \big \|\nabla G(u(t))Q^{\frac{1}{2}}\phi_{k}\big\|^2+\big\langle {f}'(u(t))G(u(t))Q^{\frac{1}{2}}\phi_{k},G(u(t))Q^{\frac{1}{2}}\phi_{k}\big\rangle \right )dt\\
    			\le& \frac{p}{2}\left (\mathcal{E} (u(t)) \right )^{p-1}\Big ( \left \|G(u(t))\right \|_{\mathcal{L}_{2}^{1}}^{2}+\sum_{k=1}^{\infty}\left \|{f}'(u(t))\right \|\big \| G(u(t))Q^{\frac{1}{2}}\phi_{k}\big\|_{L^4}^{2}\Big )dt\\
    			\le& \frac{p}{2}\left (\mathcal{E} (u(t)) \right )^{p-1}\Big ( L_5\|u\|_{1}^2+L_6+\big(3C_{d,4}^2\|u\|_{L^4}^2+C_{d,4}^2\big)\sum_{k=1}^{\infty}\big \| G(u(t))Q^{\frac{1}{2}}\phi_{k}\big\|_{\dot{H}^{\frac{d}{4}}}^{2}  \Big )dt \\
    			\le &\frac{p}{2}\left (\mathcal{E} (u(t)) \right )^{p-1}\Big ( L_5\|\nabla u\|^2+(3C_{d,4}^2L_7+\varepsilon )\|u\|_{L^4}^4+L_{5}\left \langle u_0,\phi _0 \right \rangle^{2}+C(\varepsilon,C_{d,4},L_6,L_7, L_8)\Big )dt\\
    			\le &  p\max\left \{L_5+\varepsilon, 6C_{d,4}^2L_7+3\varepsilon \right \}\left (\mathcal{E} (u(t)) \right )^{p}dt+C(p,\varepsilon,L_5)\left \langle u_0,\phi _0 \right \rangle^{2p}+C(p,\varepsilon,C_{d,4},L_6,L_7,L_8)dt
    		\end{align*}
    	 In the definition of the energy functional $\mathcal{E} (u(t))$ in \eqref{energy func}, by taking $C^{\ast }$ large enough, it holds that 
    	\begin{equation*}
    		\begin{aligned}
    			\|G(u(t))\|_{\mathcal{L}_{2}^{0}}^2 \le L_{3}\|u(t)\|^2+L_{4} 
    			\le \frac{1}{(p-1)\lambda_{1}}\mathcal{E} (u(t)).
    		\end{aligned}
    	\end{equation*}
    	Then, for the term $I_4$, we can obtain
    	\begin{equation*}
    		\begin{aligned}
    			I_4=&\frac{p(p-1)}{2}\left (\mathcal{E} (u(t)) \right )^{p-2}\sum_{k=1}^{\infty}\big \langle \mathcal{P}(Au(t)+F(u(t)))+(I-\mathcal{P})(Au(t)+F(u(t))),G(u(t))Q^{\frac{1}{2}}\phi_{k}\big \rangle^{2}dt\\
    			\le  &\frac{p(p-1)}{2}\left (\mathcal{E} (u(t)) \right )^{p-2}\|\mathcal{P}(Au(t)+F(u(t)))\|^2\|G(u(t))\|_{\mathcal{L}_{2}^{0}}^2dt\\
    			\le &\frac{p}{2\lambda_{1}}\left (\mathcal{E} (u(t)) \right )^{p-1}\|Au(t)+F(u(t))\|_{\dot{H}}^2dt,
    		\end{aligned}
    	\end{equation*}
    	Similar to the proof in Lemma \ref{uniform boundness in L^2}, for $\gamma_{2}>0$, applying the product rule to $e^{\gamma_2 t}\mathcal{E}(u(t))^p$ and integrating over $[0,t]$, we substitute the above estimates into the resulting identity to obtain
    	\begin{equation}\label{integral for energy functional}
    		\begin{aligned}
    			e^{\gamma_{2}t}\left(\mathcal{E} (u(t))\right)^p \le&\left(\mathcal{E} (u_0)\right)^p-p\big(\lambda _{1}-\frac{1}{2\lambda_{1}}\big)
    			\int_{0}^{t}e^{\gamma_{2}r}\left(\mathcal{E} (u(r))\right)^{p-1}\|Au(r)+F(u(r))\|_{\dot{H}}^2dr\\
    			&+\left ( p\max\left \{L_5+\varepsilon, 6L_7+3\varepsilon\right\}+\gamma_{2} \right )\int_{0}^{t}e^{\gamma_{2}r}\left(\mathcal{E} (u(r))\right)^{p}dr\\
    			&+C(p,\varepsilon,L_5)\left \langle u_0,\phi _0 \right \rangle^{2p}\int_{0}^{t}e^{\gamma_{2}r}dr
    			+C(\varepsilon,L_6,L_7,L_8)\int_{0}^{t}e^{\gamma_{2}r}dr\\
    			&+p\int_{0}^{t}e^{\gamma_{2}r}\left (\mathcal{E} (u(r)) \right )^{p-1}\left \langle Au(r)+F(u(r)),G(u(r))dW(r) \right \rangle.
    		\end{aligned}
    	\end{equation} 
    	By simple calculations, we have
    	\begin{equation}\label{estimate of derivative Eu}
    		\begin{aligned}
    			\left \|\mathcal{P}(Au(t)+F(u(t)))\right\|_{\dot{H}^{-1}}^{2} &=\left \langle A^{-1}(Au(t)+F(u(t))), Au(t)+F(u(t))\right \rangle\\
    			&=\left \langle Au(t),u(t) \right \rangle+2\left \langle \mathcal{P}u(t),F(u(t)) \right \rangle+\left \langle F(u(t)), A^{-1}F(u(t)) \right \rangle\\
    			&\ge \left \| \nabla u(t) \right \|^2+2\left \|u(t)\right \|_{L^4}^4-2\left \|u(t)\right \|^2-2\left \langle (I-\mathcal{P})u(t),F(u(t)) \right \rangle \\
    			&\ge (\left \| \nabla u(t) \right \|^2+\frac{1}{2}\left \|u(t)\right \|_{L^4}^4-\left \|u(t)\right \|^2)+\frac{3}{2}\left \|u(t)\right \|_{L^4}^4-\left \|u(t)\right \|^2\\
    			&-\Big(\left \|u(t)\right \|_{L^4}^4+\frac{27}{16}\left | \left \langle u_0,\phi_0 \right \rangle \right | ^4-2\left | \left \langle u_0,\phi_0 \right \rangle \right |^2\Big)\\
    			&\ge 2\mathcal{E} (u(t))-\frac{27}{16}\left | \left \langle u_0,\phi_0 \right \rangle \right | ^4-(2C^{\ast }+1).
    		\end{aligned}
    	\end{equation}
    	Taking expectations on both sides of \eqref{integral for energy functional}, and using the localization argument based on stopping times, we obtain
    	\begin{equation*}
    		\mathbb{E}\Big[\int_{0}^{t}e^{\gamma_{2}r}\left (\mathcal{E} (u(r)) \right )^{p-1}\left \langle Au(r)+F(u(r)),G(u(r))dW(r) \right \rangle\Big ]=0.
    	\end{equation*}
    	Therefore, after simplification, we obtain
    	\begin{equation*}
    		\begin{aligned}
    			 \mathbb{E}[(\mathcal{E} (u(t)))^p] \le& e^{-\gamma_{2}t}\mathbb{E}[(\mathcal{E}(u_0))^p]
    			-\gamma_{2}^{\ast}\int_{0}^{t}e^{-\gamma_{2}(t-r)}\mathbb{E}[(\mathcal{E} (u(r)))^p]dr\\
    			&+C(p,\varepsilon,L_5)\mathbb{E}[\left | \left \langle u_0,\phi_0 \right \rangle \right |^{4p}]\int_{0}^{t}e^{-\gamma_{2}(t-r)}dr+C(p,\varepsilon,C_{d,4}, C^{\ast },L_6,L_7,L_8)\int_{0}^{t}e^{-\gamma_{2}(t-r)}dr,
    		\end{aligned}
    	\end{equation*}
    	where $\gamma_{2}^{\ast}=p\Big(2\lambda_{1}^{2}-1-\max\left \{L_5+2\varepsilon, 6C_{d,4}^2L_7+4\varepsilon\right\}\Big)-\gamma_{2}$. Through \eqref{dissipative for H^1} in Assumption \ref{dissipative condition}, we  can choose
    	$\varepsilon$ and $\gamma_{2}$ small enough to ensure that $\gamma_{2}^{\ast}>0$. The desired estimate then follows immediately, and the proof is complete.

    \end{proof}

    \subsection{Uniform moment bounds for the fully discrete scheme}
    In this subsection, we establish uniform-in-time moment bounds for the proposed fully discrete numerical solution $u_m^N$. In contrast to the continuous setting, the Itô formula is not directly available at the discrete level. We first derive a uniform second-moment estimate in $H$ by taking the inner product of the fully discrete equation \eqref{full-discrete scheme} with $u_{m+1}^N$. This estimate serves as the base estimate for a mathematical induction argument, which subsequently yields uniform bounds for higher-order moments of $u_m^N$.

\begin{lemma}\label{p-moment uniform in H}
	Let $2 \le p \le 32$ and assume that $u_0\in L^{32}(\Omega;H)$. Suppose that Assumptions \ref{diffusion coeffient} and \ref{dissipative condition} hold. Define $\tau_0^*:=\displaystyle\inf_{2\le p\le32}\tau_0(p,L_3)$. Then, for every $0<\tau<\tau_0^*$, it holds that
	\begin{equation*}
		\sup_{m\geq0}\|u_{m}^{N}\|_{L^{p}(\Omega;H)}\le C.
	\end{equation*}
\end{lemma}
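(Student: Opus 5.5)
Both the base case and the induction step come from testing the scheme \eqref{full-discrete scheme} against the new iterate. As in the introduction to this subsection, the plan is to take the $H$-inner product of \eqref{full-discrete scheme} with $u_{m+1}^N$ and use
\[
\langle u_{m+1}^N-u_m^N,u_{m+1}^N\rangle=\tfrac12\big(\|u_{m+1}^N\|^2-\|u_m^N\|^2+\|u_{m+1}^N-u_m^N\|^2\big).
\]
Mass conservation gives $\langle u_m^N,\phi_0\rangle=\langle P_Nu_0,\phi_0\rangle$ for all $m$, because $A$ annihilates constants and $G$ takes values in $\dot H$. Hence $\|Au_{m+1}^N\|^2\ge\lambda_1^2\|u_{m+1}^N\|^2-\lambda_1^2\langle u_0,\phi_0\rangle^2$. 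For the nonlinear drift, integration by parts gives
\[
\langle AP_NF(u),u\rangle=3\|u\nabla u\|^2-\|\nabla u\|^2\ge-\tfrac1{2\lambda_1}\|Au\|^2-\tfrac{\lambda_1}{2}\|u\|^2,
\]
which is the same bound used in Lemma~\ref{uniform boundness in L^2}. For the noise term I split $u_{m+1}^N=u_m^N+(u_{m+1}^N-u_m^N)$. The piece $\langle u_m^N,P_NG(u_m^N)\triangle_mW\rangle$ has zero conditional expectation. The piece $\langle u_{m+1}^N-u_m^N,P_NG(u_m^N)\triangle_mW\rangle$ is bounded by $\tfrac12\|u_{m+1}^N-u_m^N\|^2+\tfrac12\|G(u_m^N)\triangle_mW\|^2$, so the increment norm cancels. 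Taking expectations and using \eqref{linear growth in H} then gives
\[
(1+c_1\tau)\,\mathbb E\|u_{m+1}^N\|^2\le(1+\tau L_3)\,\mathbb E\|u_m^N\|^2+C\tau,
\qquad c_1=2(\lambda_1^2-\lambda_1)-\varepsilon .
\]
By \eqref{dissipative for H} we have $L_3<c_1$, so the ratio $(1+\tau L_3)/(1+c_1\tau)\le 1-c\tau$, and iterating gives $\sup_m\mathbb E\|u_m^N\|^2\le C$. This is the base case.

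For higher moments I would argue by induction on $p$, doubling along $p=2,4,8,16,32$; intermediate values then follow from Hölder's inequality. Write the one-step identity as $a_{m+1}\le a_m-c_1\tau a_{m+1}+\tau C+\eta_m$, where $a_m=\|u_m^N\|^2$ and
\[
\eta_m=2\langle u_m^N,P_NG(u_m^N)\triangle_mW\rangle+\|G(u_m^N)\triangle_mW\|^2 .
\]
Raising $(1+c_1\tau)a_{m+1}\le a_m+C\tau+\eta_m$ to the power $q=p/2$ and expanding binomially, the linear term in $\eta_m$ has zero conditional expectation. The quadratic term is bounded by
\[
\tfrac{q(q-1)}{2}\,a_m^{q-2}\,\mathbb E\big[\eta_m^2\mid\mathcal F_{t_m}\big]\le q(q-1)\,a_m^{q-1}\,2\tau\,(L_3a_m+L_4)+O(\tau^2)\,(a_m^{q}+1),
\]
using Gaussian moments $\mathbb E\|G\triangle W\|^{2k}\le C_k\tau^k\|G\|_{\mathcal L_2^0}^{2k}$. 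The higher binomial terms are $O(\tau^{2})$ times polynomials in $a_m$ of degree at most $q$. Lower-order contributions $a_m^{j}$ with $j<q$ are controlled by the induction hypothesis or by Young's inequality.

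This yields $(1+c_1\tau)^q\,\mathbb E a_{m+1}^q\le(1+\tau(q L_3+2q(q-1)L_3)+C\tau^2)\,\mathbb E a_m^q+C\tau$. The coefficient $59L_3$ in \eqref{dissipative for H}, which is large enough for $q$ up to $16$, should guarantee $q c_1>(2q-1)qL_3$ up to an $\varepsilon$ margin. Choosing $\tau<\tau_0(p,L_3)$ then absorbs the $C\tau^2$ terms, and the discrete Gronwall step gives the uniform bound. The main obstacle I expect is bookkeeping this binomial expansion sharply enough: the $O(\tau^2)$ and higher terms grow like $\tau^k q^k L_3^k a_m^q$ with combinatorial constants, and one must check that they are absorbed for small $\tau$ without spending the margin in \eqref{dissipative for H}. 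That absorption is exactly what determines $\tau_0(p,L_3)$. I would first try handling the cross terms $a_m^{q-j}\eta_m^j$, $j\ge3$, by conditional Gaussian moment bounds followed by Young's inequality into $\varepsilon\tau a_m^q+C$.
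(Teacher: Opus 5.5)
Your argument is correct. The base case is essentially the paper's Step~1, but for the higher moments you take a different route.

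The paper multiplies the one-step inequality by $\|u_{m+1}^N\|^p$. It then uses $(a-b)a=\frac12(a^2-b^2+(a-b)^2)$ and Young's inequality to absorb $(\|u_{m+1}^N\|^p-\|u_m^N\|^p)^2$. This propagates a \emph{pathwise} inequality from $p=2^k$ to $2^{k+1}$, with recursively defined remainder coefficients $\mathcal R_i^k$, and expectations are taken only at the end.

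You instead raise $(1+c_1\tau)a_{m+1}\le a_m+C\tau+\eta_m$ directly to the integer power $q$. This is legitimate, since the right side dominates the nonnegative left side. You then take conditional expectations term by term.

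What your route buys:
\begin{itemize}
\item It is shorter and needs no doubling, since it works for every integer $q$.
\item It is sharper. It yields the threshold $2\lambda_1(\lambda_1-1)>(2q-1)L_3$, i.e.\ $\lambda_1(\lambda_1-1)>\frac{p-1}{2}L_3$. This is the same threshold as the continuous It\^o computation in Lemma~\ref{uniform boundness in L^2}.
\item By contrast, the paper's Young-based bookkeeping needs $(8p-11)L_3<2\lambda_1(\lambda_1-1)$ at exponent $2p$. That is exactly where the constant $59L_3$ in \eqref{dissipative for H} comes from; your argument uses only a fraction of that margin.
\end{itemize}

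What the paper's template buys is reuse. The same multiply-and-absorb scheme is run again in Lemma~\ref{uniform estimate of u_m^N in H^1}. There the right-hand side contains the term $J_3$, which depends on $u_{m+1}^N$ and is not $\mathcal F_{t_m}$-measurable. A direct power expansion would be messier in that setting. Yours is cleanest precisely because here the right-hand side involves only $u_m^N$, $\langle u_0,\phi_0\rangle$ and $\triangle_mW$.

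Two small corrections to your write-up:
\begin{itemize}
\item The linear term $q(a_m+C\tau)^{q-1}\eta_m$ does not have zero conditional expectation; only its martingale part $2\langle u_m^N,P_NG(u_m^N)\triangle_mW\rangle$ does. The part $\|G(u_m^N)\triangle_mW\|^2$ contributes $q\tau a_m^{q-1}\|G(u_m^N)\|_{\mathcal L_2^0}^2$. This is exactly the $qL_3$ in your final coefficient, so your accounting is right even though the sentence is not.
\item The $j=3$ term is $O(\tau^2)$ only after using that odd conditional moments of the Gaussian increment vanish. A crude moment bound gives $O(\tau^{3/2})$ instead, which is still absorbable for small $\tau$.
\end{itemize}
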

 \begin{proof}
 	To establish the uniform boundedness of $\|u_{m+1}^{N}\|_{L^{p}(\Omega;H)}$, we divide the proof into three steps.\\
 	\textbf{Step 1.} We begin with the case $p=2$.
 	Taking the inner product in $H$ on both sides of \eqref{full-discrete scheme} with $u_{m+1}^{N}$ yields
    \begin{equation*}
    	\begin{aligned}
    		\frac{1}{2}\big(\|u_{m+1}^{N}\|^2 - \|u_{m}^{N}\|^2 + \|u_{m+1}^{N}-u_{m}^{N}\|^2\big)
    		= &-\tau \|Au_{m+1}^{N}\|^2
    		-\tau \langle AF(u_{m+1}^{N}), u_{m+1}^{N}\rangle \\
    		&+ \langle G(u_{m}^{N})\triangle_m W, u_{m+1}^{N}-u_{m}^{N} \rangle + \langle G(u_{m}^{N})\triangle_m W, u_{m}^{N} \rangle .
    	\end{aligned}
    \end{equation*}
 	By the formula of integration by parts and the Cauchy-Schwarz inequality, it holds that
 	\begin{equation*}
 		\begin{aligned}
 			-\tau \langle AF(u_{m+1}^{N}), u_{m+1}^{N}\rangle
 			&=-3\tau \|u_{m+1}^{N}\nabla u_{m+1}^{N}\|^2+\tau\|\nabla u_{m+1}^{N}\|^2 ,\\
 			\langle G(u_{m}^{N})\triangle_m W, u_{m+1}^{N}-u_{m}^{N} \rangle
 			&\le \frac{1}{2}\|u_{m+1}^{N}-u_{m}^{N}\|^2+\frac{1}{2}\|G(u_{m}^{N})\triangle_m W\|^2.
 		\end{aligned}
 	\end{equation*}
 	Collecting the above estimates, utilizing the inequalities $\|\nabla u_{m+1}^{N}\|^2 \le \frac{1}{2\lambda_{1}} \|A u_{m+1}^{N}\|^2 + \frac{\lambda_{1}}{2} \|u_{m+1}^{N}\|^2$ and $-\|A u_{m+1}^{N}\|^2 \le -\lambda_1^2 \|u_{m+1}^{N}\|^2+\lambda_1^2\left \langle u_0, \phi _{0}\right \rangle^2$ yields
 	\begin{equation}\label{case p=2}
 		\begin{aligned}
 			&\frac{1}{2}\big(\|u_{m+1}^{N}\|^2 - \|u_{m}^{N}\|^2\big)+\lambda_{1}(\lambda_{1}-1)\tau\|u_{m+1}^{N}\|^2-\lambda_1^2\tau \left \langle u_0, \phi _{0}\right \rangle^2 \\
 			\le&\frac{1}{2}\|G(u_{m}^{N})\triangle_m W\|^2+\langle G(u_{m}^{N})\triangle_m W, u_{m}^{N} \rangle 
 			=:I_1+I_2.
 		\end{aligned}
 	\end{equation}
 	Taking expectations on both sides of \eqref{case p=2} and using \eqref{linear growth in H} in Assumption \ref{diffusion coeffient} yields
 	\begin{equation*}
 		\begin{aligned}
 			\Big( \frac{1}{2}+\lambda_{1}(\lambda_{1}-1)\tau \Big )\mathbb{E}[\|u_{m+1}^{N}\|^2] 
 			\le \Big(\frac{1}{2}+\frac{1}{2}L_{3}\tau\Big )\mathbb{E}[\|u_{m}^{N}\|^2] +\lambda_1^2\tau \mathbb{E}[\left\langle u_0, \phi _{0}\right\rangle^2]+\frac{1}{2}L_{4}\tau.
 		\end{aligned}
 	\end{equation*}
 	 Set $\kappa_{2}=\mathbb{E}[\left\langle u_0, \phi _{0}\right\rangle^{2}]$. By simple calculations, we have
 	\begin{equation*}
 		\begin{aligned}
 			\mathbb{E}[\|u_{m+1}^{N}\|^2]
 			\le& \left ( \frac{1+L_{3}\tau}{1+2\lambda_{1}(\lambda_{1}-1)\tau} \right )^{m+1}\mathbb{E}[\|u_{0}^{N}\|^2]+\left ( \frac{L_{4}\tau+2\kappa_{1}\lambda_{1}^2\tau}{1+2\lambda_{1}(\lambda_{1}-1)\tau} \right )\sum_{i=0}^{m}\left ( \frac{1+L_{3}\tau}{1+2\lambda_{1}(\lambda_{1}-1)\tau} \right )^{i}.
 		\end{aligned}
 	\end{equation*}
 	It follows from \eqref{dissipative for H} in Assumption \ref{dissipative condition} that $\frac{1+L_{3}\tau}{1+2\lambda_{1}(\lambda_{1}-1)\tau} <1$. Applying the inequality 
 	$a^{m} \le e^{-(1-a)m}$  for all  $a \in (0,1)$ and $m \ge 0$, we obtain
 	\begin{equation*}
 		\mathbb{E}[\|u_{m+1}^{N}\|^2] \le e^{-\frac{2\lambda_{1}(\lambda_{1}-1)-L_{3} }{1+2\lambda_{1}(\lambda_{1}-1)\tau}t_{m+1}}\mathbb{E}\big [ \left \| u_{0}^{N}\right \|^2  \big ]+\frac{L_{4}+2\kappa_{2}\lambda_{1}^2}{2\lambda_{1}(\lambda_{1}-1)-L_{3}}=:K_1  \le C.
 	\end{equation*}
 	This completes the proof of uniform moment boundedness for the case $p=2$. \\

 	\noindent\textbf{Step 2.} We show the case for $p=4$.  Multiplying the inequality \eqref{case p=2} by $\|u_{m+1}^{N}\|^2$, and applying the identity $(a-b)a=\frac{1}{2}\left ( a^2-b^2+(a-b)^2 \right )$ together with the Young inequality yields
 	\begin{equation*}
 		\begin{aligned}
 			&\frac{1}{4}\Big[\|u_{m+1}^{N}\|^4 - \|u_{m}^{N}\|^4+\left (\|u_{m+1}^{N}\|^2-\|u_{m}^{N}\|^2\right )^2 \Big]
 			+\lambda_{1}(\lambda_{1}-1)\tau\|u_{m+1}^{N}\|^4-\lambda_1^2\tau \left \langle u_0, \phi _{0}\right \rangle^2\|u_{m+1}^{N}\|^2\\
 			\le &\left ( I_1+I_2 \right )\left (\|u_{m+1}^{N}\|^2-\|u_{m}^{N}\|^2\right )+\left ( I_1+I_2 \right )\|u_{m}^{N}\|^2\\
 			\le &\left ( I_1+I_2 \right )^2+\frac{1}{4}\left (\|u_{m+1}^{N}\|^2-\|u_{m}^{N}\|^2\right )^2+\left ( I_1+I_2 \right )\|u_{m}^{N}\|^2.
 		\end{aligned}
 	\end{equation*}
 	After a straightforward simplification, we rewrite the above inequality as follows
 	\begin{equation}\label{case p=4}
 		\begin{aligned}
 			&\frac{1}{4}\big[\|u_{m+1}^{N}\|^4 - \|u_{m}^{N}\|^4 \big]
 			+\left ( \lambda_{1}(\lambda_{1}-1)\right )  \tau\|u_{m+1}^{N}\|^4-\lambda_{1}^2\tau \left \langle u_0, \phi _{0}\right \rangle^2\|u_{m+1}^{N}\|^2\\
 			\le &\left ( I_1+I_2 \right )^2+\left ( I_1+I_2 \right )\|u_{m}^{N}\|^2
 		\end{aligned}
 	\end{equation}
 	Using the Burkh\"{o}lder-Davis-Gundy inequality and \eqref{linear growth in H} in Assumption \ref{diffusion coeffient}, we deduce that
 	\begin{equation*}
 		\begin{aligned}
 			\mathbb{E}[I_1^2]\le& \frac{1}{4}c_4\tau ^{2} \mathbb{E}\big [ \| G(u_{m}^{N})\|_{\mathcal{L}_{2}^{0}}^{4}\big]
 			\le \frac{1}{2}c_4L_{3}^2\tau ^{2}\mathbb{E}\big [ \| u_{m}^{N}\|^{4}\big]+\frac{1}{2}c_4L_{4}^2\tau ^{2}\\
 			\le &18L_{3}^2\tau ^{2}\mathbb{E}\big [ \| u_{m}^{N}\|^{4}\big]+18L_{4}^2\tau ^{2},
 		\end{aligned}
 	\end{equation*}
 	where $c_{4}$ denotes the constant in the Burkh\"{o}lder-Davis-Gundy inequality and $c_{p}=\big(\frac{p(p-1)}{2}\big)^{\frac{p}{2}}$.
 	By the It\^{o}-isometry formula and \eqref{linear growth in H} in Assumption \ref{diffusion coeffient}, it can be verified that
 	\begin{equation*}
 		\begin{aligned}
 			\mathbb{E}[I_2^2]=&\mathbb{E}\Big[ \Big(\int_{t_m}^{t_{m+1}}\left \langle u_{m}^{N}, G(u_{m}^{N})dW(s)\right\rangle\Big)^2 \Big]\\
 			\le& \mathbb{E}\Big[ \int_{t_m}^{t_{m+1}}\| u_{m}^{N}\|^{2}\| G(u_{m}^{N})\|_{\mathcal{L}_{2}^{0}}^{2}ds\Big]\\
 			\le& (L_{3}+\varepsilon) \tau\mathbb{E}\big [ \| u_{m}^{N}\|^{4}\big]+\frac{1}{4\varepsilon} L_{4}^2\tau.
 		\end{aligned}
 	\end{equation*}
 	Similarly, by the It\^{o}-isometry formula and \eqref{linear growth in H} in Assumption \ref{diffusion coeffient},
 	we have
 	\begin{equation*}
 		\begin{aligned}
 			\mathbb{E}[I_1\|u_{m}^{N}\|^{2}]=&\frac{1}{2}\mathbb{E}\Big[\Big\| \int_{t_m}^{t_{m+1}}\|u_{m}^{N}\|G(u_{m}^{N})dW(s)\Big\|^2 \Big]\\
 			=&\frac{1}{2}\tau \mathbb{E}\Big[\|u_{m}^{N}\|^2\|G(u_{m}^{N})\|_{\mathcal{L}_{2}^{0}}^{2}\Big]\\
 			\le &\Big(\frac{1}{2}L_{3}+\frac{1}{4}\varepsilon\Big)\tau\mathbb{E}\big [ \| u_{m}^{N}\|^{4}\big]+\frac{1}{4\varepsilon}L_{4}^2\tau.
 		\end{aligned}
 	\end{equation*}
 	Taking expectations on both sides of inequality \eqref{case p=4} and noting that the term $I_2\|u_{m}^{N}\|^2$ vanishes upon taking expectation, we substitute the above inequalities into \eqref{case p=4} and conclude that
 	\begin{equation}
 		\begin{aligned}
			\Big ( \frac{1}{4}+\left ( \lambda_{1}(\lambda_{1}-1)-\varepsilon \right )\tau \Big ) \mathbb{E}\big[\|u_{m+1}^{N}\|^4 \big]
			\le& \Big ( \frac{1}{4}+\Big( \frac{5}{2}L_{3}+\frac{9}{4}\varepsilon+36L_{3}^2\tau\Big)\tau\Big ) \mathbb{E}\big [ \| u_{m}^{N}\|^{4}\big]\\
			&+\frac{1}{4\varepsilon}\lambda _{1}^{4}\tau\mathbb{E}[\left\langle u_0, \phi _{0}\right\rangle^{4}]+\frac{3}{4\varepsilon}L_{4}^2\tau+36L_{4}^2\tau ^{2}.
 		\end{aligned}
 	\end{equation}
 	Through \eqref{dissipative for H} in Assumption \ref{dissipative condition}, we can know that $\varepsilon:=\frac{2\lambda_{1}(\lambda_{1}-1)-5L_{3}}{10}>0$. 
 	Choose $\tau _{0}:=\frac{2\lambda_{1}(\lambda_1-1)-5L_3 }{360L_3^2}$. Then, for any $0<\tau <\tau_0$, it follows that $36L_{3}^2\tau < \varepsilon$. Let $\kappa_{4}=\mathbb{E}[\langle u_0, \phi _{0}\rangle^{4}]$. By following similar arguments to those in Step 1 and employing a numerical iteration, we obtain
 	\begin{equation*}
 		\begin{aligned}
    		\mathbb{E}[\|u_{m+1}^{N}\|^4] \le e^{-\gamma_{1}t_{m+1}}\mathbb{E}\big [ \| u_{0}^{N}\|^4  \big ]+ \frac{\varepsilon^{-1}\lambda_{1}^{4}\kappa_4+3\varepsilon^{-1}L_{4}^2+144L_{4}^2\tau}{4\lambda_{1}(\lambda_{1}-1)-10L_{3}-17\varepsilon}\le C,
 		\end{aligned}
 	\end{equation*}
 	where $\gamma_{1}=\frac{4\lambda_{1}(\lambda_{1}-1)-10L_{3}-17\varepsilon}{1+4(\lambda_{1}(\lambda_{1}-1)-\varepsilon)\tau }>0$.

 	\noindent\textbf{Step 3.} For $p=2^k$ with  $k \ge 3$, assuming that the following inequality holds 
 	\begin{equation}\label{case p=2^k}
 		\begin{aligned}
 			&\frac{1}{p}\big[\|u_{m+1}^{N}\|^p - \|u_{m}^{N}\|^p \big]+\left ( \lambda_{1}(\lambda_{1}-1)\right )  \tau\|u_{m+1}^{N}\|^p
 			-\lambda_{1}^2\tau \left \langle u_0, \phi _{0}\right \rangle^2\|u_{m+1}^{N}\|^{p-2}\\
 			\le &\left ( I_1+I_2 \right )\|u_{m}^{N}\|^{p-2}+(p-3)\left ( I_1+I_2 \right )^2\|u_{m}^{N}\|^{p-4}
 			+\sum_{i=1}^{k-2}\mathcal{R}_{i}^{k}(I_1+I_2)^{2^{i+1} }\|u_{m}^{N}\|^{p-2^{i+2}},
 		\end{aligned}
 	\end{equation}
 	where 
 	\begin{equation*}
 		\mathcal{R}_{i}^{k}=
 		\left\{\begin{aligned}
 			&\mathcal{R}_1^{k-1}+2^{k-1}(k-2)(2^{k-1}-3)^2,  &&i=1, \\
 			&2^{k-1}(k-2)(\mathcal{R}_{i-1}^{k-1})^2+\mathcal{R}_{i}^{k-1},  &&2\le i \le k-3,\\
 			&2^{k-1}(k-2)(\mathcal{R}_{k-3}^{k-1})^2, &&i=k-2.
 		\end{aligned}\right.
 	\end{equation*}
 	It can be verified that  when $k=3$, $\mathcal{R}_1^{2}=0$ and $\mathcal{R}_1^{3}=4$.  
 	We aim to prove that the same inequality remains valid for the exponent $2p=2^{k+1}$.
 	Multiplying the inequality \eqref{case p=2^k} by $\|u_{m+1}^{N}\|^p$ and applying the identity $(a-b)a=\frac{1}{2}\left ( a^2-b^2+(a-b)^2 \right )$ together with the Young inequality yields
 		\begin{align*}
 			&\frac{1}{2p}\big[\|u_{m+1}^{N}\|^{2p} - \|u_{m}^{N}\|^{2p}+\left(\|u_{m+1}^{N}\|^{p}-\|u_{m}^{N}\|^{p}\right)^2\big]
 			+\left ( \lambda_{1}(\lambda_{1}-1)\right )  \tau\|u_{m+1}^{N}\|^{2p}
 			-\lambda_{1}^2\tau \left \langle u_0, \phi _{0}\right \rangle^2\|u_{m+1}^{N}\|^{2p-2}\\
 			\le& \left (I_1+I_2\right )\|u_{m}^{N}\|^{p-2}\left(\|u_{m+1}^{N}\|^{p}-\|u_{m}^{N}\|^{p}\right)+\left ( I_1+I_2 \right )\|u_{m}^{N}\|^{2p-2}\\
 			&+(p-3)\left (I_1+I_2\right )^2\|u_{m}^{N}\|^{p-4}\left(\|u_{m+1}^{N}\|^{p}-\|u_{m}^{N}\|^{p}\right)+(p-3)\left (I_1+I_2\right )^2\|u_{m}^{N}\|^{2p-4}\\
 			&+\Big( \sum_{i=1}^{k-2}\mathcal{R}_{i}^k(I_1+I_2)^{2^{i+1} }\|u_{m}^{N}\|^{p-2^{i+2}}\Big)\left(\|u_{m+1}^{N}\|^{p}-\|u_{m}^{N}\|^{p}\right)+\Big( \sum_{i=1}^{k-2}\mathcal{R}_{i}^k(I_1+I_2)^{2^{i+1} }\|u_{m}^{N}\|^{p-2^{i+2}}\Big)\|u_{m}^{N}\|^{p} \\
 			\le& \frac{1}{4p}\left(\|u_{m+1}^{N}\|^{p}-\|u_{m}^{N}\|^{p}\right)^2+p\left (I_1+I_2\right )^2\|u_{m}^{N}\|^{2p-4}+\left ( I_1+I_2 \right )\|u_{m}^{N}\|^{2p-2}\\
 			&+\frac{1}{4p(k-1)}\left(\|u_{m+1}^{N}\|^{p}-\|u_{m}^{N}\|^{p}\right)^2+p(k-1)(p-3)^2\left (I_1+I_2\right )^4\|u_{m}^{N}\|^{2p-8}\\
 			&+(p-3)\left (I_1+I_2\right )^2\|u_{m}^{N}\|^{2p-4}
 			+\frac{k-2}{4p(k-1)}\left(\|u_{m+1}^{N}\|^{p}-\|u_{m}^{N}\|^{p}\right)^2\\
 			&+\frac{p(k-1)}{k-2}\Big( \sum_{i=1}^{k-2}\mathcal{R}_{i}^k(I_1+I_2)^{2^{i+1} }\|u_{m}^{N}\|^{p-2^{i+2}}\Big)^2+\Big( \sum_{i=1}^{k-2}\mathcal{R}_{i}^k(I_1+I_2)^{2^{i+1} }\|u_{m}^{N}\|^{p-2^{i+2}}\Big)\|u_{m}^{N}\|^{p}
 		\end{align*}
 	After a straightforward simplification, we rewrite the above inequality as follows
 	\begin{equation}\label{case p=2^k+1}
 		\begin{aligned}
 			&\frac{1}{2p}\big[\|u_{m+1}^{N}\|^{2p} - \|u_{m}^{N}\|^{2p}\big]
 			+\left ( \lambda_{1}(\lambda_{1}-1)\right )  \tau\|u_{m+1}^{N}\|^{2p}
 			-\lambda_{1}^2\tau \left \langle u_0, \phi _{0}\right \rangle^2\|u_{m+1}^{N}\|^{2p-2}\\
 			\le &\left ( I_1+I_2 \right )\|u_{m}^{N}\|^{2p-2}+(2p-3)\left (I_1+I_2\right )^2\|u_{m}^{N}\|^{2p-4}+\sum_{i=1}^{k-1}\mathcal{R}_{i}^{k+1}(I_1+I_2)^{2^{i+1} }\|u_{m}^{N}\|^{2p-2^{i+2}}.
 		\end{aligned}
 	\end{equation}
 	Assuming that the inequality \eqref{case p=2^k} holds for the exponent $p$, the above argument yields inequality \eqref{case p=2^k+1} for the exponent $2p$, thereby completing the induction step. We next use the inequality \eqref{case p=2^k+1} as the starting point to establish the uniform-in-time bounds for $\|u_{m+1}^{N}\|^{2p}$. Noting that $\mathbb{E}\left[I_1\right]\sim \tau$ and $\mathbb{E}\left[I_2^2\right]\sim \tau$, we concentrate on the dominant terms, namely $I_1\|u_{m}^{N}\|^{2p-2}$ and $I_2^2\|u_{m}^{N}\|^{2p-4}$. 
 	By the It\^{o}-isometry formula, \eqref{linear growth in H} in Assumption \ref{diffusion coeffient} and the Young inequality, it can be verified that
 	\begin{equation*}
 		\begin{aligned}
 			\mathbb{E}[I_1\|u_{m}^{N}\|^{2p-2}]=&\frac{1}{2}\mathbb{E}\Big[\Big\| \int_{t_m}^{t_{m+1}}\|u_{m}^{N}\|^{p-1}G(u_{m}^{N})dW(s)\Big\|^2 \Big]
 			=\frac{1}{2}\tau \mathbb{E}\Big[\|u_{m}^{N}\|^{2p-2}\|G(u_{m}^{N})\|_{\mathcal{L}_{2}^{0}}^{2}\Big]\\
 			\le &\big( \frac{1}{2}L_{3}+\varepsilon \big) \tau\mathbb{E}\big [ \| u_{m}^{N}\|^{2p}\big]
 			+p^{-1}\varepsilon^{1-p}L_{4}^{p}\tau.
 		\end{aligned}
 	\end{equation*}
 	Since $I_2\|u_{m}^{N}\|^{2p-2}$ defines a real-valued martingale, the expectation of this term vanishes. Similarly, by the It\^{o}-isometry formula and \eqref{linear growth in H} in Assumption \ref{diffusion coeffient},
 	we have
 	\begin{equation*}
 		\begin{aligned}
 			\mathbb{E}[I_2^2\|u_{m}^{N}\|^{2p-4}]=&\mathbb{E}\Big[\Big( \int_{t_m}^{t_{m+1}}\left \langle \|u_{m}^{N}\|^{p-2}u_{m}^{N}, G(u_{m}^{N})dW(s)\right\rangle \Big)^2 \Big]\\
 			\le& \mathbb{E}\Big[ \int_{t_m}^{t_{m+1}}\| u_{m}^{N}\|^{2p-2}\| G(u_{m}^{N})\|_{\mathcal{L}_{2}^{0}}^{2}ds\Big]\\
 			\le& \big(L_{3}+\varepsilon \big) \tau\mathbb{E}\big [ \| u_{m}^{N}\|^{2p}\big]
 			+p^{-1}\varepsilon^{1-p}L_{4}^{p}\tau.
 		\end{aligned}
 	\end{equation*}
 	As $\mathbb{E}\big [I_{1}^{2^{i+1}}\big]\sim \tau^{2^{i+1}}$ and $\mathbb{E}\big [I_{2}^{2^{i+1}}\big]\sim \tau^{2^{i}}$, we can obtain the following inequality  
 	\begin{equation*}
 		\begin{aligned}
 			(4p-6)\mathbb{E}\left[I_{1}^{2}\|u_{m}^{N}\|^{2p-4}\right ]  
 			+\mathbb{E}\Big[\sum_{i=1}^{k-1}\mathcal{R}_{i}^{k+1}(I_1+I_2)^{2^{i+1} }\|u_{m}^{N}\|^{2p-2^{i+2}}\Big]
 			\le C(p,L_3)\tau ^{2}\mathbb{E}\left[\|u_{m}^{N}\|^{2p}\right ]+C(p,L_4)\tau ^{2}.
 		\end{aligned}
 	\end{equation*}
 	By substituting the above inequalities into \eqref{case p=2^k+1}, we conclude that
 	\begin{equation}
 		\begin{aligned}
 			&\Big ( \frac{1}{2p}+\bigl ( \lambda_{1}(\lambda_{1}-1)-\varepsilon \bigl)\tau \Big ) \mathbb{E}\big[\|u_{m+1}^{N}\|^{2p} \big]\\
 			\le &\Big ( \frac{1}{2p}+\Big( (4p-\frac{11}{2})L_{3}+(4p-5)\varepsilon +C(p,L_3)\tau\Big)\tau \Big ) \mathbb{E}\big [ \| u_{m}^{N}\|^{2p}\big] \\
 			&+(4p-5)p^{-1}\varepsilon^{1-p}L_{4}^{p}\tau
 			+p^{-1}\varepsilon^{1-p}\lambda_{1}^{2p}\mathbb{E}[\left \langle u_0,\phi_0\right \rangle^{2p}]\tau+C(p,L_4)\tau^2.
 		\end{aligned}
 	\end{equation}
 	From \eqref{dissipative for H} in Assumption \ref{dissipative condition}, we know that $\varepsilon :=\frac{2\lambda_{1}(\lambda_{1}-1)-(8p-11)L_{3} }{8p}>0$.
 	Set $\tau _{0}:=\frac{2\lambda_{1}(\lambda_1-1)-(8p-11)L_3 }{8pC(p,L_3)}$. Then, for any $0<\tau<\tau_0$, it follows that $C(p,L_3)\tau<\varepsilon$. Let $\kappa_{2p}=\mathbb{E}[\langle u_0, \phi _{0}\rangle^{2p}]$. Similarly, we can apply a numerical iteration to deduce that 
 	\begin{equation*}
 		\begin{aligned}
 			\mathbb{E}[\|u_{m+1}^{N}\|^{2p}] \le e^{-\gamma_{2}t_{m+1}}\mathbb{E}\big [ \| u_{0}^{N}\|^{2p} \big ]+ \frac{(8p-10)p^{-1}\varepsilon^{1-p}L_{4}^{p}+2p^{-1}\varepsilon^{1-p}\lambda _{1}^{2p}\kappa_{2p}+C(p,L_4)\tau}{2\lambda_{1}(\lambda_{1}-1)-(8p-11)L_{3}-(8p-6)\varepsilon} \le C,
 		\end{aligned}
 	\end{equation*}
 	where $\gamma_{2}=\frac{2\lambda_{1}(\lambda_{1}-1)-(8p-11)L_{3}-(8p-6)\varepsilon}{\frac{1}{p}+2(\lambda_{1}(\lambda_{1}-1)-\varepsilon)\tau} >0$.
 	Thus, the proof is complete. 
 	
 \end{proof}

 Analogous to the approach used for the continuous equation, we leverage the energy functional analysis method to establish the uniform moment boundedness of the mild solution $u_{m}^{N}$ in $H^1$. Recall the energy functional $\mathcal{E}(u_{m}^{N})$ defined as 
 \begin{equation}\label{full energy func}
 	\begin{aligned}
 		\mathcal{E}(u_{m}^{N})=\frac{1}{2}\|\nabla u_{m}^{N} \|^2 +\frac{1}{4}\left\|(u_{m}^{N})^2-1 \right\|^2.
 	\end{aligned}
 \end{equation}
 
 \begin{lemma}\label{uniform estimate of u_m^N in H^1}
 	Let $2 \le p \le 32$ and assume that $u_{0}\in L^{64}(\Omega;H^1)$. Suppose that Assumptions \ref{diffusion coeffient} and \ref{dissipative condition} hold. Define $\tau_1^*:=\displaystyle\inf_{2\le p\le32}\tau_1(p,C_{d,4}, L_5,L_7)$. Then, for every $0<\tau<\tau_1^*$, it holds that
 	\begin{equation}
 		\sup_{m\ge0}\mathbb{E}\big [ \mathcal{E}^{\frac{p}{2}}(u_{m+1}^{N}) \big ]\le C .
 	\end{equation}
 \end{lemma}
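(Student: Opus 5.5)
We will mirror the structure of Lemma~\ref{p-moment uniform in H}: first derive a one-step energy inequality for $\mathcal{E}(u_{m+1}^N)$, take expectations to get a uniform second moment (i.e.\ $p=2$, bounding $\mathbb{E}[\mathcal{E}]$), and then raise it to higher powers by repeated squaring (induction on $p=2^k$). All three steps rest on the one-step inequality below.

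\textbf{One-step inequality.} Take the inner product of \eqref{full-discrete scheme} with $\mu_{m+1}:=Au_{m+1}^N+P_NF(u_{m+1}^N)$, the discrete chemical potential. Convexity of $\frac12\|\nabla\cdot\|^2$ gives
\begin{equation*}
\langle A u_{m+1}^N, u_{m+1}^N-u_m^N\rangle\ge \tfrac12\|\nabla u_{m+1}^N\|^2-\tfrac12\|\nabla u_m^N\|^2 .
\end{equation*}
For the quartic part, the Taylor expansion of $\frac14(s^2-1)^2$ at $u_{m+1}^N$ gives
\begin{equation*}
\langle F(u_{m+1}^N),u_{m+1}^N-u_m^N\rangle\ge \tfrac14\|(u_{m+1}^N)^2-1\|^2-\tfrac14\|(u_m^N)^2-1\|^2-\tfrac12\|u_{m+1}^N-u_m^N\|^2 ,
\end{equation*}
since $f'\ge-1$. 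The drift contributes $-\tau\|A^{1/2}\mu_{m+1}\|^2\le-\tau\lambda_1\|\mathcal P\mu_{m+1}\|^2$. The increment $\|u_{m+1}^N-u_m^N\|^2$ is absorbed by testing the scheme also with $A^{-1}(u_{m+1}^N-u_m^N)$, which controls it by $\tau\|\mu\|^2$ plus the noise in $H^{-1}$.

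The noise term $\langle \mu_{m+1},P_NG(u_m^N)\triangle_mW\rangle$ is split as
\begin{equation*}
\langle \mu_{m},G\triangle W\rangle+\langle \mu_{m+1}-\mu_m,G\triangle W\rangle .
\end{equation*}
The first piece is a martingale increment. The second is handled by Taylor-expanding $\mathcal E$ to second order around $u_m^N$, i.e.\ the discrete analogue of $D^2\mathcal E$ in \eqref{frechet deri}. This gives $\frac12D^2\mathcal E(\xi)(G\triangle W,G\triangle W)$, whose expectation is bounded by \eqref{linear growth in H^1} and \eqref{linear growth in L^4} exactly as $I_3$ in Lemma~\ref{uniform estimate of u(t) in H^1}. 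The cubic remainder from $f''$ is of higher order in $\tau$.

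The coercivity bound \eqref{estimate of derivative Eu} then converts $\|\mathcal P\mu_{m+1}\|^2\ge\lambda_1\|\mathcal P\mu_{m+1}\|_{-1}^2$ into $2\lambda_1\mathcal E(u_{m+1}^N)$ minus constants. Mass conservation gives $\langle u_m^N,\phi_0\rangle=\langle u_0^N,\phi_0\rangle$. The result is a recursion of the form
\begin{equation*}
(1+c_1\tau)\mathcal E_{m+1}\le (1+c_2\tau+C\tau^2)\mathcal E_m+C\tau+M_m+R_m ,
\end{equation*}
with $c_1>c_2$ guaranteed by \eqref{dissipative for H^1}, $M_m$ a martingale increment and $R_m$ a higher-order remainder. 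The $p=2$ case then follows by iteration as in Step~1 of Lemma~\ref{p-moment uniform in H}, with $\tau<\tau_1$ small.

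\textbf{Higher moments.} We multiply the recursion by $\mathcal E_{m+1}^{p/2}$ and use $(a-b)a=\frac12(a^2-b^2+(a-b)^2)$ with Young's inequality, exactly as in Step~3 of Lemma~\ref{p-moment uniform in H}. The cross terms $M_m^2\mathcal E_m^{p-2}$ are estimated through the It\^o isometry by
\begin{equation*}
\tau\,\mathcal E_m^{p-2}\|\mathcal P\mu_m\|^2\|G\|_{\mathcal L_2^0}^2\le \tfrac{\tau}{(p-1)\lambda_1}\|\mathcal P\mu_m\|^2\mathcal E_m^{p-1},
\end{equation*}
using the choice of $C^*$. This term is absorbed by the dissipation, as $I_4$ was in the continuous case. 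The Gaussian moment factor $\mathfrak m_{4p}$ in \eqref{dissipative for H^1} enters here through the BDG/Gaussian bound on $(\triangle W)^{4p}$ in the $L^4$ noise terms. The terms $R_m$ and higher powers give $O(\tau^2)\mathbb E\mathcal E_m^{p}$ and are absorbed for $\tau<\tau_1(p)$. Iterating yields the uniform bound. The moments of $u_m^N$ in $L^{p}(\Omega;H)$ needed along the way come from Lemma~\ref{p-moment uniform in H}, and the assumption $u_0\in L^{64}(\Omega;H^1)$ supplies the initial energy moments.

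\textbf{Main obstacle.} The implicit drift produces $\mu_{m+1}$, while the noise is adapted to $u_m^N$. Controlling $\langle\mu_{m+1}-\mu_m,G\triangle_mW\rangle$ in $L^p(\Omega)$ without an $L^\infty$ bound is the hard part. I would first try bounding $\|\nabla(u_{m+1}^N-u_m^N)\|$ and the cubic differences via $\mathcal E_m^{1/2}$ times $\|u_{m+1}^N-u_m^N\|_{L^4}$, and then use $H^{d/4}\hookrightarrow L^4$ with \eqref{linear growth in L^4} to close. The explicit smallness constants in \eqref{dissipative for H^1} suggest this is precisely where they are consumed.
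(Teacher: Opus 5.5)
\textbf{Verdict: there is a genuine gap.} Your overall architecture matches the paper's: test with the chemical potential, add the $A^{-1}(u_{m+1}^N-u_m^N)$ test, get coercivity from \eqref{estimate of derivative Eu}, then square inductively over $p=2^k$ using Lemma~\ref{p-moment uniform in H}. The gap sits at the step you call the main obstacle, and your one-step inequality is what prevents it from closing.

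\textbf{What your one-step inequality throws away.} Replacing the exact identities by the convexity bound for $\frac12\|\nabla\cdot\|^2$ and the Taylor bound with $f'\ge-1$ discards the numerical-dissipation terms
\begin{equation*}
\tfrac12\|\nabla\delta\|^2,\qquad \tfrac14\big\|(u_{m+1}^N)^2-(u_m^N)^2\big\|^2,\qquad \tfrac12\|u_{m+1}^N\delta\|^2,\qquad \delta:=u_{m+1}^N-u_m^N,
\end{equation*}
produced by $(a-b)a=\frac12(a^2-b^2+(a-b)^2)$. These are the only quantities that can absorb the non-adapted cross term $\langle\mu_{m+1}-\mu_m,P_NG(u_m^N)\triangle_mW\rangle$.

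\textbf{Why your substitute fails.} Since $\delta=-\tau A\mu_{m+1}+P_NG(u_m^N)\triangle_mW$, the mean value theorem gives $\int_0^1D^2\mathcal E(u_m^N+s\delta)(\delta,P_NG\triangle_mW)\,ds$, not $\frac12D^2\mathcal E(\xi)(G\triangle_mW,G\triangle_mW)$. Besides the full (not half) noise form, this contains the drift coupling $-\tau\langle A^{1/2}\mu_{m+1},A^{3/2}P_NG\triangle_mW\rangle$ and an $f'$-weighted term in $A\mu_{m+1}$. Controlling these needs $\|A^{3/2}\mu_{m+1}\|$ or a diffusion that is Hilbert--Schmidt into $\dot H^{3}$. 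Neither is controlled by the dissipation $\tau\|A^{1/2}\mu_{m+1}\|^2$ uniformly in $N$, short of a CFL condition $\tau\lambda_N^2\lesssim1$. Two further problems:
\begin{itemize}
\item The weight $f'$ at intermediate points is not $\mathcal F_{t_m}$-measurable, so you cannot take its expectation as for $I_3$ in the continuous proof.
\item The $f''$-remainder is not of higher order in $\tau$, because $\delta$ contains $\tau A\mu_{m+1}$, over which you have no pointwise control.
\end{itemize}
The same loss hits the $\frac12\|\delta\|^2$ term. Testing with $A^{-1}\delta$ only controls $\|A^{-1/2}\delta\|$, and returning to $H$ requires $\|\delta\|^2\le\frac14\|\nabla\delta\|^2+\|A^{-1/2}\delta\|^2$, i.e.\ the discarded gradient increment again.

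\textbf{The missing idea.} Keep the exact identities and integrate by parts in $\langle A\delta,P_NG\triangle_mW\rangle$. Then use
\begin{equation*}
F(u_{m+1}^N)-F(u_m^N)=u_m^N\big((u_{m+1}^N)^2-(u_m^N)^2\big)+\big((u_{m+1}^N)^2-1\big)\delta,
\end{equation*}
so that Young's inequality absorbs each piece of the cross term into one retained term. What remains is only $\|\nabla G\triangle_mW\|^2$, $\|u_m^NP_NG\triangle_mW\|^2$, $\frac12\|u_{m+1}^NP_NG\triangle_mW\|^2$, $\frac12\|G\triangle_mW\|^2$, and $\|A^{-1/2}G\triangle_mW\|^2$ from the $A^{-1}\delta$ test. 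The only non-adapted one is split as $\frac\tau4\|u_{m+1}^N\|_{L^4}^4+\frac1{4\tau}\|P_NG\triangle_mW\|_{L^4}^4$. The first piece is absorbed on the left via $\|u\|_{L^4}^4\le4\mathcal E(u)+2\|u\|^2$. The second is bounded by conditional Gaussian moments and \eqref{linear growth in L^4}, which is where the constants $\mathfrak m_r$ in \eqref{dissipative for H^1} come from.

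\textbf{A secondary problem in your higher-moment step.} The $C^*$ trick from the continuous proof does not transfer directly. The dissipation $\tau\|A^{1/2}\mu_{m+1}\|^2\mathcal E^{p-1}(u_{m+1}^N)$ sits at level $m+1$, while $\|\mathcal P\mu_m\|^2\mathcal E^{p-1}(u_m^N)$ sits at level $m$. So it cannot be absorbed in a one-step recursion, and telescoping leaves $\tau\|\mu_0\|^2$, which is not bounded uniformly in $N$ for $u_0\in H^1$. Instead, integrate by parts in $\langle P_NG(u_m^N)\triangle_mW,Au_m^N+F(u_m^N)\rangle$ and bound its square directly via \eqref{linear growth in H^1} and \eqref{linear growth in L^4}. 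This is why \eqref{dissipative for H^1} involves $p$-dependent multiples of $L_5$ and $L_7$.
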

 \begin{proof}Similar to the proof in Lemma \ref{p-moment uniform in H}, the proof of the uniform boundedness of $\mathbb{E}\bigl[\mathcal{E}^{\frac{p}{2}}(u_{m+1}^{N})\bigr]$ is divided into three steps.\\
 	\textbf{Step 1.} We begin with the case $p=2$.
 	Taking the inner product in $H$ on both sides of \eqref{full-discrete scheme} with $Au_{m+1}^{N}+F(u_{m+1}^{N})$ yields
 	\begin{equation}\label{full energy}
 		\begin{aligned}
 			&\left\langle u_{m+1}^{N}-u_{m}^{N}, Au_{m+1}^{N}+F(u_{m+1}^{N})\right\rangle +\tau \big \|A^{\frac{1}{2}}(Au_{m+1}^{N}+F(u_{m+1}^{N}))\big\|^2\\
 			=&\left\langle P_NG(u_{m}^{N})\Delta_m W, Au_{m+1}^{N}+F(u_{m+1}^{N})-Au_{m}^{N}-F(u_{m}^{N})\right\rangle
 			+\left\langle P_NG(u_{m}^{N})\Delta_m W, Au_{m}^{N}+F(u_{m}^{N})\right\rangle.
 		\end{aligned}
 	\end{equation}
  Using the identity $(a-b)a=\frac{1}{2}\left ( a^2-b^2+(a-b)^2 \right )$ and the integration by parts formula, we have
  \begin{equation*}
  	\begin{aligned}
  		\left\langle u_{m+1}^{N}-u_{m}^{N}, Au_{m+1}^{N}\right\rangle =&\left\langle \nabla (u_{m+1}^{N}-u_{m}^{N}),\nabla u_{m+1}^{N}\right\rangle\\
  		=&\frac{1}{2}\left(\|\nabla u_{m+1}^{N}\|^2-\|\nabla u_{m}^{N}\|^2+\|\nabla (u_{m+1}^{N}-u_{m}^{N})\|^2 \right),\\
  		\left\langle u_{m+1}^{N}-u_{m}^{N}, F(u_{m+1}^{N})\right\rangle=&\Big\langle u_{m+1}^{N}-u_{m}^{N},\frac{1}{2}\big ( (u_{m+1}^{N})^{2}-1\big )\left(u_{m+1}^{N}+u_{m}^{N}+u_{m+1}^{N}-u_{m}^{N}\right)\Big\rangle\\
  		=&\frac{1}{2}\Big\langle (u_{m+1}^{N})^2-(u_{m}^{N})^2,(u_{m+1}^{N})^{2}-1\Big\rangle 
  		+\frac{1}{2}\Big\langle (u_{m+1}^{N}-u_{m}^{N})^2,(u_{m+1}^{N})^{2}-1\Big\rangle \\
  		=&\frac{1}{4}\left(\left\|(u_{m+1}^{N})^{2}-1\right\|^2-\left\|(u_{m}^{N})^{2}-1\right\|^2+\left\|(u_{m+1}^{N})^{2}-(u_{m}^{N})^{2}\right\|^2\right)\\
  		&+\frac{1}{2}\left (\left \|u_{m+1}^{N}(u_{m+1}^{N}-u_{m}^{N})\right\|^2-\left\|u_{m+1}^{N}-u_{m}^{N}\right\|^2\right ).
  	\end{aligned}
  \end{equation*}
 	By the Cauchy-Schwarz inequality, the Young inequality, and the integration by parts formula, one can easily deduce that
 		\begin{align*}
 			&\left\langle G(u_{m}^{N})\Delta_m W, Au_{m+1}^{N}-Au_{m}^{N}\right\rangle+\left\langle P_NG(u_{m}^{N})\Delta_m W, F(u_{m+1}^{N})-F(u_{m}^{N})\right\rangle \\
 			=&\left\langle \nabla G(u_{m}^{N})\Delta_m W,\nabla(u_{m+1}^{N}-u_{m}^{N}) \right \rangle+\left\langle P_NG(u_{m}^{N})\Delta_m W, u_{m}^{N}\big((u_{m+1}^{N})^{2}-(u_{m}^{N})^{2}\big)\right\rangle \\
 			&+\left\langle P_NG(u_{m}^{N})\Delta_m W, \big((u_{m+1}^{N})^{2}-1\big)(u_{m+1}^{N}-u_{m}^{N})\right\rangle\\
 			\le &\left\|\nabla G(u_{m}^{N})\Delta_m W\right\|^2+\frac{1}{4}\|\nabla (u_{m+1}^{N}-u_{m}^{N})\|^2+\left\|u_{m}^{N}P_NG(u_{m}^{N})\Delta_m W\right\|^2 \\
 			&+\frac{1}{4}\left\|(u_{m+1}^{N})^{2}-(u_{m}^{N})^{2}\right\|^2 +\frac{1}{2}\left\|u_{m+1}^{N}P_NG(u_{m}^{N})\Delta_m W\right\|^2
 			+\frac{1}{2}\left \|u_{m+1}^{N}(u_{m+1}^{N}-u_{m}^{N})\right\|^2 \\ &+\frac{1}{2}\left\|G(u_{m}^{N})\Delta_m W\right\|^2
 			+\frac{1}{2}\left \|u_{m+1}^{N}-u_{m}^{N}\right\|^2.
 		\end{align*}
 	Substituting the above inequalities into \eqref{full energy}, we obtain
 	\begin{equation}\label{middle full energy with p=1}
 		\begin{aligned}
 			&\mathcal{E}(u_{m+1}^{N})-\mathcal{E}(u_{m}^{N})+\frac{1}{4}\|\nabla (u_{m+1}^{N}-u_{m}^{N})\|^2+\tau \big \|A^{\frac{1}{2}}(Au_{m+1}^{N}+F(u_{m+1}^{N}))\big\|^2\\
 			\le &\left\|\nabla G(u_{m}^{N})\Delta_m W\right\|^2+\left\|u_{m}^{N}P_NG(u_{m}^{N})\Delta_m W\right\|^2
 			+\frac{1}{2}\left\|u_{m+1}^{N}P_NG(u_{m}^{N})\Delta_m W\right\|^2\\
 			&+\frac{1}{2}\left\|G(u_{m}^{N})\Delta_m W\right\|^2
 			+\left \|u_{m+1}^{N}-u_{m}^{N}\right\|^2+\left\langle P_NG(u_{m}^{N})\Delta_m W, Au_{m}^{N}+F(u_{m}^{N})\right\rangle .
 		\end{aligned}
 	\end{equation}
 	Analogous to the preceding method, by taking the inner product in $H$ on both sides of \eqref{full-discrete scheme} with $A^{-1}(u_{m+1}^{N}-u_{m}^{N})$, we can deduce 
 	\begin{equation}\label{middle prperty}
 		\begin{aligned}
 			\frac{5}{8}\big \| A^{-\frac{1}{2}}(u_{m+1}^{N}-u_{m}^{N})\big \|^2
 			\le -\tau (\mathcal{E}(u_{m+1}^{N})-\mathcal{E}(u_{m}^{N}))+\big \| A^{-\frac{1}{2}}G(u_{m}^{N})\Delta_{m}W \big \|^2.
 		\end{aligned}
 	\end{equation}
 	Utilizing the inequality \eqref{middle prperty} and the Young inequality, we obtain
 	\begin{equation*}
 		\begin{aligned}
 			\|u_{m+1}^{N}-u_{m}^{N}\|^2\le&\frac{1}{4}\|\nabla (u_{m+1}^{N}-u_{m}^{N})\|^2+\| A^{-\frac{1}{2}}(u_{m+1}^{N}-u_{m}^{N})\big \|^2\\
 			\le &\frac{1}{4}\|\nabla (u_{m+1}^{N}-u_{m}^{N})\|^2-\frac{8}{5}\tau(\mathcal{E}(u_{m+1}^{N})-\mathcal{E}(u_{m}^{N}))+\frac{8}{5}\big \| A^{-\frac{1}{2}}G(u_{m}^{N})\Delta_{m}W \big \|^2.
 		\end{aligned}
 	\end{equation*}
 	Substituting the above inequality into \eqref{middle full energy with p=1}, we can rewrite \eqref{middle full energy with p=1}  as follows
 	\begin{equation}\label{full energy with p=1}
 		\begin{aligned}
 			&\Big (1+\frac{8}{5}\tau \Big ) \big (\mathcal{E}(u_{m+1}^{N})-\mathcal{E}(u_{m}^{N})\big )+\tau \big \|A^{\frac{1}{2}}(Au_{m+1}^{N}+F(u_{m+1}^{N}))\big\|^2\\
 			\le &\left\|\nabla G(u_{m}^{N})\Delta_m W\right\|^2+\left\|u_{m}^{N}P_NG(u_{m}^{N})\Delta_m W\right\|^2
 			+\frac{1}{2}\left\|u_{m+1}^{N}P_NG(u_{m}^{N})\Delta_m W\right\|^2\\
 			&+\frac{1}{2}\left\|G(u_{m}^{N})\Delta_m W\right\|^2
 			+\frac{8}{5}\big \| A^{-\frac{1}{2}}G(u_{m}^{N})\Delta_{m}W \big \|^2+\left\langle P_NG(u_{m}^{N})\Delta_m W, Au_{m}^{N}+F(u_{m}^{N})\right\rangle \\
 			=:&J_1+J_2+J_3+J_4+J_5+J_6.
 		\end{aligned}
 	\end{equation}
 	Let $C_{d,4}>0$ be the Sobolev embedding constant from $H^{\frac{d}{4}}\hookrightarrow L^4$.
 	It follows from the It\^{o}-isometry formula, \eqref{linear growth in H^1}-\eqref{linear growth in L^4} in Assumption \ref{diffusion coeffient} and the H\"{o}lder inequality that
 		\begin{align}\label{J1-J6 estimate}
 			\mathbb{E}\left [ J_1\right ]=&\tau\mathbb{E}\big [\|G(u_{m}^{N})\|_{\mathcal{L}_{2}^{1}}^{2}\big ]
 			\le L_5\tau\mathbb{E}\left [\|\nabla u_{m}^{N}\|^{2}\right]+L_{5}\tau \mathbb{E}[ \|u_{0}\|^{2}]+L_6\tau ,\notag\\
 			\mathbb{E}\left [ J_2\right ]=&\mathbb{E}\Big[\Big\|\int_{t_m}^{t_{m+1}}u_{m}^{N}P_NG(u_{m}^{N})dW(s)\Big\|^2\Big]
 			=\tau\mathbb{E}\left[\left\|u_{m}^{N}P_NG(u_{m}^{N})\right\|_{\mathcal{L}_{2}^{0}}^{2}\right ]\notag\\
 			\le& C_{d,4}^2\tau\mathbb{E}\Big[\left\|u_{m}^{N}\right\|_{L^4}^{2}\sum_{k=1}^{\infty}\|G(u_{m}^{N})Q^{\frac{1}{2}}\phi_{k}\|_{\dot{H}^{\frac{d}{4}}}^{2} \Big]\\ \notag
 			\le& (C_{d,4}^2L_7+\varepsilon) \tau\mathbb{E}\big[\left\|u_{m}^{N}\right\|_{L^4}^{4}\big]+\frac{1}{4\varepsilon}C_{d,4}^4L_{8}^{2}\tau,  \\
 			\mathbb{E}\left [ J_4+J_5\right ]=&\frac{1}{2}\tau\mathbb{E}\big[\|G(u_{m}^{N})\|_{\mathcal{L}_{2}^{0}}^{2}\big]+\frac{8}{5}\tau\mathbb{E}\big[\|G(u_{m}^{N})\|_{\mathcal{L}_{2}^{-1}}^{2}\big ]\notag\\
 			\le &\frac{21}{10}\tau\mathbb{E}\big[\|G(u_{m}^{N})\|_{\mathcal{L}_{2}^{0}}^{2}\big]
 			\le\frac{21}{10}L_{3}\tau\mathbb{E}[\|u_{m}^{N}\|^{2}]+\frac{21}{10}L_{4}\tau\notag.
 		\end{align}
 	Let $Z$ denote the standard Gaussian random variable. Define $\mathfrak{m}_{r}:= \mathbb{E}|Z|^r=\frac{2^{\frac{r}{2}}\Gamma(\frac{r+1}{2})}{\sqrt{\pi } }$.
 	Since $u_m^N$ is $\mathcal F_{t_m}$-measurable and $\Delta_mW$
 	is independent of $\mathcal F_{t_m}$, using the Young inequality,
 	the tower property, the conditional fourth-moment formula for
 	Gaussian random variables, and the Sobolev embedding
 	$H^{\frac{d}{4}}\hookrightarrow L^4$, we obtain
 	
 	\begin{equation}\label{J_3 estimate}
 		\begin{aligned}
 			\mathbb{E}\left [ J_3\right ]
 			\le&\frac{1}{4}\tau\mathbb{E}[\|u_{m+1}^{N}\|_{L^4}^{4}]
 			+\frac{1}{4\tau}\mathbb{E}[\mathbb{E}[\|P_NG(u_{m}^{N})\Delta_m W\|_{L^4}^{4}|\mathcal{F}_{t_m} ]]\\
 			=&\frac{1}{4}\tau\mathbb{E}[\|u_{m+1}^{N}\|_{L^4}^{4}]+\frac{\mathfrak{m}_{4}}{4}\tau\mathbb{E}\Big[\int _{\mathcal{D}}\Big ( \sum_{k=1}^{\infty} \big |P_NG(u_m^N)Q^{\frac{1}{2}}\phi_k(x)\big |^2\Big )^2dx \Big]\\
 			\le&\frac{1}{4}\tau\mathbb{E}[\|u_{m+1}^{N}\|_{L^4}^{4}]+\frac{\mathfrak{m}_{4}}{4}C_{d,4}^4\tau\mathbb{E}\Big[\Big ( \sum_{k=1}^{\infty} \|P_NG(u_m^N)Q^{\frac{1}{2}}\phi_k\|_{\dot{H}^{\frac{d}{4}}}^2\Big )^2\Big]\\
 			\le&\frac{1}{4}\tau\mathbb{E}[\|u_{m+1}^{N}\|_{L^4}^{4}]+\frac{\mathfrak{m}_{4}}{2}C_{d,4}^4 L_7^2\tau\mathbb{E}[\|u_m^N\|_{L^4}^4]+\frac{\mathfrak{m}_{4}}{2}C_{d,4}^4 L_8^2\tau
 		\end{aligned}
 	\end{equation}
 	Using the estimate in \eqref{estimate of derivative Eu}, we have
 	\begin{equation}\label{full discrete for energy basis}
 		\big\|A^{\frac{1}{2}}(Au_{m}^{N}+F(u_{m}^{N}))\big\|^2 \ge 2\lambda_{1}^{2}\mathcal{E}(u_{m+1}^{N})-\frac{27}{16}\lambda_{1}^{2}\|u_0\| ^4-\lambda_{1}^{2}.
 	\end{equation}
 	By the definition of $\mathcal{E}(u_{m}^{N})$ in \eqref{full energy func}, we have $\|u_m^N\|_{L^4}^4 \le 4\mathcal{E}(u_{m}^{N})+2\|u_m^N\|^2$.
 	Taking expectations, combining the inequalities \eqref{J1-J6 estimate}, \eqref{J_3 estimate} and \eqref{full discrete for energy basis} into \eqref{full energy with p=1} gives
 	\begin{equation*}
 		\begin{aligned}
 			&\Big (1+\frac{8}{5}\tau \Big ) \mathbb{E}\big[ \big (\mathcal{E}(u_{m+1}^{N})-\mathcal{E}(u_{m}^{N})\big )\big]
 			+\big(2\lambda_{1}^{2}-1\big)\tau\mathbb{E}\big[\mathcal{E}(u_{m+1}^{N})\big]\\
 			\le &\big(2L_5+4C_{d,4}^2L_7+2\mathfrak{m}_{4}C_{d,4}^4L_7^2+4\varepsilon\big) \tau\mathbb{E}\big[\mathcal{E}(u_{m}^{N})\big] +\big(\frac{21}{10}L_{3}+2C_{d,4}^2L_7+\mathfrak{m}_{4}C_{d,4}^4L_7^2+2\varepsilon\big) \tau\mathbb{E}[\|u_{m}^{N}\|^{2}]\\
 			&+\frac{1}{2}\tau\mathbb{E}[\|u_{m+1}^{N}\|^{2}]+\frac{27}{16}\lambda_{1}^{2}\tau \mathbb{E}\big[\|u_0\|^4\big ]
 			+L_{5}\tau \mathbb{E}[ \|u_{0}\|^{2}]+\big(\frac{1}{4\varepsilon}+\frac{\mathfrak{m}_{4}}{2}\big)C_{d,4}^4L_{8}^{2}\tau +\big( \frac{21}{10}L_{4}+L_6+\lambda_{1}^{2} \big)\tau.
 		\end{aligned}
 	\end{equation*}
 	Through \eqref{dissipative for H^1} in Assumption \ref{dissipative condition}, we have $\varepsilon:=\frac{2\lambda_{1}^{2}-1-2L_{5}-4C_{d,4}^2L_{7}-2\mathfrak{m}_{4}C_{d,4}^4L_{7}^{2}}{5}>0$. Then, we can deduce that
 	\begin{equation*}
 		\frac{1+\frac{8}{5}\tau+\big(2L_5+4C_{d,4}^2L_7+2\mathfrak{m}_{4}C_{d,4}^4L_{7}^{2}+4\varepsilon \big)\tau }{1+\frac{8}{5}\tau +(2\lambda_{1}^{2}-1)\tau }<1 .
 	\end{equation*}
 	Set $\kappa_{4}=\mathbb{E}[\|u_0\|^{4}]$ and $\kappa_{2}=\mathbb{E}[\|u_0\|^{2}]$. Given that $\|u_{m+1}^{N}\|_{L^{p}(\Omega;H)}\le C$ has been established in Lemma \ref{p-moment uniform in H}, it follows that there exists a constant $K_{2}>0$ such that $\mathbb{E}[\|u_{m}^{N}\|^{2}]\le K_{2}$.  Then, by applying numerical iteration, we have
    \begin{equation*}
	   \begin{aligned}
		\mathbb{E}[\mathcal{E}(u_{m}^{N})] \le e^{-\beta_{1}t_{m+1}}\mathbb{E}\big [ \mathcal{E}(u_{0}^{N})\big]
		+\frac{C(\varepsilon,L_3,L_7,C_{d,4})K_2+\frac{27}{16}\lambda_{1}^{2}\kappa_{4}+L_{5}\kappa_{2}+C(\varepsilon,L_4,L_6,L_8,C_{d,4})}{2\lambda_{1}^{2}-1-\big(2L_5+4L_7+2L_{7}^{2}+4\varepsilon\big)} \le C
     	\end{aligned}
     \end{equation*}
     where $\beta_{1}=\frac{2\lambda_{1}^{2}-1-\big(2L_5+4C_{d,4}^2L_7+6C_{d,4}^4L_{7}^{2}+4\varepsilon\big)}{1+\frac{8}{5}\tau +(2\lambda_{1}^{2}-1)\tau}>0$.

 	\noindent \textbf{Step 2.} We show the case for $p=4$. Multiplying the inequality \eqref{full energy with p=1} by $\mathcal{E}(u_{m+1}^{N})$ and applying the identity $(a-b)a=\frac{1}{2}\left ( a^2-b^2+(a-b)^2 \right )$ together with the Young inequality yields
 	\begin{equation}\label{full energy with p=2}
 		\begin{aligned}
 			&\frac{1}{2}\Big (1+\frac{8}{5}\tau \Big ) \big (\mathcal{E}^2(u_{m+1}^{N})-\mathcal{E}^2(u_{m}^{N})\big )
 			+\tau \big \|A^{\frac{1}{2}}D\mathcal{E}(u_{m+1}^{N})\big\|^2\mathcal{E}(u_{m+1}^{N})\\
 			\le& \left(J_1+J_2+J_4+J_5+J_6\right)\mathcal{E}(u_{m}^{N})+\frac{1}{2}\left(J_1+J_2+J_4+J_5+J_6\right)^2+J_3\mathcal{E}(u_{m+1}^{N}).
 		\end{aligned} 
 	\end{equation}
 	Noting that $\mathbb{E}\left[J_1+J_2+J_3+J_4+J_5\right]\sim \tau$ and $\mathbb{E}\left[J_6^2\right]\sim \tau$, we concentrate on the dominant terms, namely $\left(J_1+J_2+J_4+J_5\right)\mathcal{E}(u_{m}^{N})$, $J_6^2$ and $J_3\mathcal{E}(u_{m+1}^{N})$. 
 	Using the It\^{o}-isometry formula, \eqref{linear growth in H^1}-\eqref{linear growth in L^4} in Assumption \ref{diffusion coeffient}, the Sobolev embedding inequality $H^{\frac{d}{4}}\hookrightarrow L^4$ and the Young inequality, we have
 	\begin{equation*}
 		\begin{aligned}
 			\mathbb{E}\left[J_6^2\right]
 			\le &2\mathbb{E}\left[\|\nabla u_{m}^{N}\|^2\|\nabla G(u_{m}^{N})\Delta_{m}W\|^2\right]+2\mathbb{E}\left[\|u_{m}^{N}P_NG(u_{m}^{N})\Delta_{m}W\|^2\|(u_{m}^{N})^2-1\|^2\right] \\
 			\le &2\tau \mathbb{E}\left[\|\nabla u_{m}^{N}\|^2\| G(u_{m}^{N})\|_{\mathcal{L}_{2}^{1}}^2\right]
 			+2C_{d,4}^2\tau\mathbb{E}\Big[\|u_{m}^{N}\|_{L^4}^2\sum_{k=1}^{\infty}\|G(u_{m}^{N})Q^{\frac{1}{2}}\phi_{k}\|_{\dot{H}^{\frac{d}{4}}}^{2}\|(u_{m}^{N})^2-1\|^2\Big]\\
 			\le&2L_5\tau\mathbb{E}\left[\|\nabla u_{m}^{N}\|^4\right]+2L_5\tau\mathbb{E}\left[\|\nabla u_{m}^{N}\|^2\|u_{0}\|^2\right]+2L_6\tau\mathbb{E}\left[\|\nabla u_{m}^{N}\|^2\right]\\ &+2C_{d,4}^2L_7\tau\mathbb{E}\Big[\|u_{m}^{N}\|_{L^4}^4\|(u_{m}^{N})^2-1\|^2\Big]
 			+2C_{d,4}^2L_8\tau\mathbb{E}\Big[\|u_{m}^{N}\|_{L^4}^2\|(u_{m}^{N})^2-1\|^2\Big]\\
 			\le& \left(8L_5+\varepsilon \right )\tau\mathbb{E}\left[\mathcal{E}^{2}(u_{m}^{N})\right]+\Big ( 32C_{d,4}^2L_7+\varepsilon \Big)\tau \mathbb{E}\left [ \mathcal{E}^{2}(u_{m}^{N})\right ]+C(\varepsilon,C_{d,4},L_{7},L_{8})\tau \|u_{m}^{N}\|^4\\
 			&+C(\varepsilon,L_{5})\tau \|u_0\|^4+C(\varepsilon,C_{d,4},L_{6},L_{8})\tau\\
 			\le& \left(8L_5+32C_{d,4}^2L_7+2\varepsilon \right )\tau\mathbb{E}\left[\mathcal{E}^{2}(u_{m}^{N})\right]+C(\varepsilon,C_{d,4},L_{7},L_{8})\tau \|u_{m}^{N}\|^4\\
 			&+C(\varepsilon,L_{5})\tau \|u_0\|^4+C(\varepsilon,C_{d,4},L_{6},L_{8})\tau.
 		\end{aligned}
 	\end{equation*}
 	Similar to the estimates in \eqref{J1-J6 estimate}, by using \eqref{linear growth in H^1}-\eqref{linear growth in L^4} in Assumption \ref{diffusion coeffient}, the It\^{o}-isometry formula and H\"{o}lder inequality, we have
 		\begin{align}
 			\mathbb{E}\left [ J_1\mathcal{E}(u_{m}^{N})\right ]=&\tau\mathbb{E}\big [\|\nabla G(u_{m}^{N})\|_{\mathcal{L}_{2}^{0}}^{2}\mathcal{E}(u_{m}^{N})\big ]
 			\le (2L_5+\varepsilon)\tau\mathbb{E}\left [\mathcal{E}^2(u_{m}^{N})\right]+\frac{L_{5}^{2}}{2\varepsilon }\tau\|u_0\|^4+\frac{L_{6}^{2}}{2\varepsilon }\tau ,\notag\\
 			\mathbb{E}\left [ J_2\mathcal{E}(u_{m}^{N})\right ]=&\mathbb{E}\Big[\Big\|\int_{t_m}^{t_{m+1}}\sqrt{\mathcal{E}(u_{m}^{N})}u_{m}^{N}P_NG(u_{m}^{N})dW(s)\Big\|^2\Big]
 			=\tau\mathbb{E}\left[\left\|u_{m}^{N}P_NG(u_{m}^{N})\right\|_{\mathcal{L}_{2}^{0}}^{2}\mathcal{E}(u_{m}^{N})\right ]\notag\\
 			\le &C_{d,4}^2L_7\tau\mathbb{E}\big[\|u_{m}^{N}\|_{L^4} ^{4}\mathcal{E}(u_{m}^{N})\big]+C_{d,4}^2L_8\tau\mathbb{E}\big[\|u_{m}^{N}\|_{L^4}^{2}\mathcal{E}(u_{m}^{N})\big]\label{J1-J6 estimate p=2}\\  
 			\le &\big( 4C_{d,4}^2L_7+\varepsilon \big)\tau \mathbb{E}\big[\mathcal{E}^2(u_{m}^{N})\big]+C(\varepsilon,C_{d,4},L_{7},L_{8})\tau \|u_{m}^{N}\|^4+C(\varepsilon,C_{d,4},L_{8})\tau,  \notag\\
 			\mathbb{E}\left [ (J_4+J_5)\mathcal{E}(u_{m}^{N})\right ] \le &\frac{21}{10}\tau\mathbb{E}\big[\left\|G(u_{m}^{N})\right\|_{\mathcal{L}_{2}^{0}}^{2}\mathcal{E}(u_{m}^{N})\big ]\notag\\
 			\le & \frac{1}{2}\varepsilon\mathbb{E}\big[\mathcal{E}^2(u_{m}^{N})\big]
 			+C(\varepsilon,L_{3})\tau \|u_{m}^{N}\|^4+C(\varepsilon,L_{4})\tau.\notag
 		\end{align}
 		Since $u_m^N$ is $\mathcal F_{t_m}$-measurable and $\Delta_mW$
 		is independent of $\mathcal F_{t_m}$, using the Young inequality,
 		the tower property, and the Sobolev embedding
 		$H^{\frac{d}{4}}\hookrightarrow L^4$, we obtain
 		\begin{align}\label{J_3 estimate p=2}
 			\mathbb{E}\left [J_3\mathcal{E}(u_{m+1}^{N})\right ]
 			\le&\frac{1}{4}\tau\mathbb{E}[\|u_{m+1}^{N}\|_{L^4}^{4}\mathcal{E}(u_{m+1}^{N})]
 			+\frac{1}{4\tau}\mathbb{E}[\|P_NG(u_{m}^{N})\Delta_m W\|_{L^4}^{4}\mathcal{E}(u_{m+1}^{N})]\notag\\
 			\le &\big(\frac{3}{2}+\varepsilon\big)\tau\mathbb{E}\Big[\mathcal{E}^2(u_{m+1}^{N})\Big]+\frac{1}{16\varepsilon }\tau\mathbb{E}\left [ \|u_{m+1}^{N}\|^4 \right ] 
 			+\frac{1}{32\tau^3}\mathbb{E}[\mathbb{E}[\|P_NG(u_{m}^{N})\Delta_m W\|_{L^4}^{8}|\mathcal{F}_{t_m} ]]\notag \\
 			\le &\big(\frac{3}{2}+\varepsilon\big)\tau\mathbb{E}\Big[\mathcal{E}^2(u_{m+1}^{N})\Big]+\frac{1}{16\varepsilon }\tau\mathbb{E}\left [ \|u_{m+1}^{N}\|^4 \right ]
 			+\frac{\mathfrak{m}_{8}}{32}C_{d,4}^8\tau\mathbb{E}\Big[\Big( \sum_{k=1}^{\infty}\|G(u_{m}^{N})Q^{\frac{1}{2}}\phi_{k}\|_{\dot{H}^{\frac{d}{4}}}^{2}\Big)^4\Big]\notag\\
 			\le& \big(\frac{3}{2}+\varepsilon\big)\tau\mathbb{E}\Big[\mathcal{E}^2(u_{m+1}^{N})\Big]+8\mathfrak{m}_{8}C_{d,4}^8L_{7}^{4}\tau\mathbb{E}\Big[\mathcal{E}^2(u_{m}^{N})\Big]+\frac{1}{16\varepsilon }\tau\mathbb{E}\left [ \|u_{m+1}^{N}\|^4 \right ] \\
 			&+2\mathfrak{m}_{8}C_{d,4}^8L_{7}^{4}\tau\mathbb{E}\left [ \|u_{m}^{N}\|^4 \right ]+\frac{1}{4}\mathfrak{m}_{8}C_{d,4}^8L_{8}^{4}\tau. \notag
 		\end{align} 		
 	As $\mathbb{E}\big [J_{1}^{2}+J_{2}^{2}+J_{4}^{2}+J_{5}^{2}\big]\sim \tau^{2}$, we can obtain the following inequality
 	\begin{equation*}
 		\begin{aligned}
 			\frac{5}{2}\mathbb{E}\left [J_1^2+J_2^2+J_4^2+J_5^2\right ]\le C(p,C_{d,4},L_5,L_7)\tau^2\mathbb{E}\left [\mathcal{E}^2(u_{m}^{N})\right ]+C(p,C_{d,4},L_6,L_8)\tau^2.
 		\end{aligned}
 	\end{equation*}
 	Taking expectations on both sides of inequality \eqref{full energy with p=2} and noting that the term $J_6\mathcal{E}(u_{m}^{N})$ vanishes upon taking expectation, we substitute the above inequalities into \eqref{full energy with p=2} and conclude that
 	\begin{equation*}
 		\begin{aligned}
 			&\frac{1}{2}\Big (1+\frac{8}{5}\tau \Big )\mathbb{E}\left[\mathcal{E}^2(u_{m+1}^{N})-\mathcal{E}^2(u_{m}^{N})\right]
 			+(2\lambda_{1}^{2}-\frac{3}{2}-2\varepsilon)\tau\mathbb{E}\left[\mathcal{E}^2(u_{m+1}^{N})\right] \\ 
 			\le &\Big ( 22L_5+84C_{d,4}^2L_7+8L_{7}^{4}+8\mathfrak{m}_{8}C_{d,4}^8\varepsilon+C(p,C_{d,4},L_5,L_7)\tau\Big )\tau\mathbb{E}\big[\mathcal{E}^2(u_{m}^{N})\big]
 			+C(\varepsilon)\tau\mathbb{E}\left [\|u_{m+1}^{N}\|^{4}\right]\\
 			&+C(\varepsilon,L_{5})\tau\mathbb{E}[\|u_0\|^{8}]
 			+ C(\varepsilon,C_{d,4},L_{3},L_{7},L_{8})\tau\mathbb{E}[\|u_{m}^{N}\|^{4}]
 			+C(\varepsilon,C_{d,4},L_{4},L_{6},L_{8})(\tau+\tau^2). 
 		\end{aligned}
 	\end{equation*}
 	From \eqref{dissipative for H^1} in Assumption \ref{dissipative condition}, we know that $\varepsilon:=\frac{2\lambda_{1}^{2}-\frac{3}{2}-22L_5-84C_{d,4}^2L_7-8\mathfrak{m}_{8}C_{d,4}^8L_{7}^{4}}{12}>0$. Set $\tau_1:=\frac{2\lambda_{1}^{2}-\frac{3}{2}-22L_5-84C_{d,4}^2L_7-8\mathfrak{m}_{8}C_{d,4}^8L_{7}^{4}}{12C(p,C_{d,4},L_5,L_7)}$. Then, for any $0<\tau<\tau_1$, it follows that $C(p,C_{d,4},L_5,L_7)\tau <\varepsilon$.
    Set $\kappa_{8}=\mathbb{E}[\|u_0\|^{8}]$. By Lemma \ref{p-moment uniform in H}, it follows that there exists a constant $K_4>0$ such that $\mathbb{E}[\|u_{m+1}^{N}\|^{4}]\le K_{4}$. Similarly, employing a numerical iteration yields 
 	\begin{equation*}
 		\begin{aligned}
 			\mathbb{E}\left[\mathcal{E}^2(u_{m+1}^{N})\right]\le e^{-\beta_{2}t_{m+1}}\mathbb{E}\big [ \mathcal{E}^2(u_{0}^{N})\big]
 			+\frac{C(\varepsilon,C_{d,4},L_{3},L_{7},L_{8})K_4+C(\varepsilon,L_5)\kappa_{8}+C(\varepsilon,C_{d,4},L_{4},L_{6},L_{8}) }{2\lambda_{1}^{2}-\frac{3}{2}-22L_5-84L_7-8L_{7}^{4}-11\varepsilon} \le C,
 		\end{aligned}
 	\end{equation*}
 	where $\beta _{2}=\frac{2\lambda_{1}^{2}-\frac{3}{2}-22L_5-84C_{d,4}^2L_7-8\mathfrak{m}_{8}C_{d,4}^8L_{7}^{4}-11\varepsilon}{\frac{1}{2}\big (1+\frac{8}{5}\tau\big )+\big(2\lambda_{1}^{2}-\frac{3}{2}-2\varepsilon\big)\tau }>0$ .

 	\noindent\textbf{Step 3.} For $p=2^k$ with  $k \ge 2$, assuming that the following inequality holds
 	\begin{equation}\label{full energy with p=2^k}
 		\begin{aligned}
 			&\frac{1}{p}\Big (1+\frac{8}{5}\tau \Big ) \big (\mathcal{E}^p(u_{m+1}^{N})-\mathcal{E}^p(u_{m}^{N})\big )
 			+\tau \big \|A^{\frac{1}{2}}D\mathcal{E}(u_{m+1}^{N})\big\|^2\mathcal{E}^{p-1}(u_{m+1}^{N})\\
 			\le& \left(J_1+J_2+J_4+J_5+J_6\right)\mathcal{E}^{p-1}(u_{m}^{N})+\big(p-\frac{3}{2}\big)\left(J_1+J_2+J_4+J_5+J_6\right)^2\mathcal{E}^{p-2}(u_{m}^{N}) \\
 			&+\sum_{i=1}^{k-1}\mathcal{M}_i^k\left(J_1+J_2+J_4+J_5+J_6\right)^{2^{i+1}} \mathcal{E}^{p-2^{i+1}}(u_{m}^{N})+J_3\mathcal{E}^{p-1}(u_{m+1}^{N}),
 		\end{aligned} 
 	\end{equation}
 	where
 	\begin{equation*}
 		\mathcal{M}_{i}^{k}=
 		\left\{\begin{aligned}
 			&\mathcal{M}_1^{k-1}+2^{k-1}(k-1)\big(2^{k-1}-\frac{3}{2}\big)^2,  &&i=1, \\
 			&2^{k-1}(k-1)(\mathcal{M}_{i-1}^{k-1})^2+\mathcal{M}_{i}^{k-2},  &&2\le i \le k-2,\\
 			&2^{k-1}(k-1)(\mathcal{M}_{k-2}^{k-1})^2, &&i=k-1.
 		\end{aligned}\right.
 	\end{equation*}
 	 It can be verifies that when $k=2$, $\mathcal{M}_1^1 =0$ and $\mathcal{M}_2^1 =\frac{1}{2}$. We aim to prove that the same inequality remains valid for the exponent $2p=2^{k+1}$. Similar to the method of Step 3 in Lemma \ref{uniform estimate of u_m^N in H^1}, multiplying the inequality \eqref{full energy with p=2^k} by $\mathcal{E}^{p}(u_{m+1}^{N})$ and applying the identity $(a-b)a=\frac{1}{2}\left ( a^2-b^2+(a-b)^2 \right )$ together with the Young inequality yields
 	\begin{equation}\label{full energy with p=2^k+1}
 		\begin{aligned}
 			&\frac{1}{2p}\Big (1+\frac{8}{5}\tau \Big ) \big (\mathcal{E}^{2p}(u_{m+1}^{N})-\mathcal{E}^{2p}(u_{m}^{N})\big )
 			+\tau \big \|A^{\frac{1}{2}}D\mathcal{E}(u_{m+1}^{N})\big\|^2\mathcal{E}^{2p-1}(u_{m+1}^{N}) \\
 			\le& \left(J_1+J_2+J_4+J_5+J_6\right)\mathcal{E}^{2p-1}(u_{m}^{N})+\big(2p-\frac{3}{2}\big)\left(J_1+J_2+J_4+J_5+J_6 \right)^2\mathcal{E}^{2p-2}(u_{m}^{N}) \\
 			&+\sum_{i=1}^{k}\mathcal{M}_{i}^{k+1}\left(J_1+J_2+J_4+J_5+J_6\right)^{2^{i+1}} \mathcal{E}^{2p-2^{i+1}}(u_{m}^{N})+J_3\mathcal{E}^{2p-1}(u_{m+1}^{N}),
 		\end{aligned} 
 	\end{equation}
 	The preceding estimate completes the induction step from the exponent $p$ to $2p$. We next use inequality \eqref{full energy with p=2^k+1} as the starting point to establish the uniform-in-time bounds for $\mathcal{E}^{2p}(u_{m+1}^{N})$.
 	Since $u_m^N$ is $\mathcal F_{t_m}$-measurable and $\Delta_mW$
 	is independent of $\mathcal F_{t_m}$, using \eqref{linear growth in L^4} in Assumption \ref{diffusion coeffient}, the Young inequality,
 	the tower property, and the Sobolev embedding
 	$H^{\frac{d}{4}}\hookrightarrow L^4$, we obtain ( for $\varepsilon <\frac{1}{2p}$ )
 	\begin{equation*}
 		\begin{aligned}
 			\mathbb{E}\left [J_3\mathcal{E}^{2p-1}(u_{m+1}^{N})\right ]
 			\le&\frac{1}{4}\tau\mathbb{E}\Big[\|u_{m+1}^{N}\|_{L^4}^{4}\mathcal{E}^{2p-1}(u_{m+1}^{N})\Big]
 			+\frac{1}{4\tau}\mathbb{E}\Big[\|P_NG(u_{m}^{N})\Delta_m W\|_{L^4}^{4}\mathcal{E}^{2p-1}(u_{m+1}^{N})\Big]\\
 			\le&(1+\varepsilon)\tau\mathbb{E}\Big[\mathcal{E}^{2p}(u_{m+1}^{N})\Big]+C(p,\varepsilon )\tau \mathbb{E}[\|u_{m+1}^{N}\|^{4p}]+\frac{2p-1}{2p}\tau\mathbb{E}\Big[\mathcal{E}^{2p}(u_{m+1}^{N})\Big]\\
 			&+\frac{1}{2p}4^{-2p}\tau^{1-4p}\mathbb{E}\Big[\mathbb{E}\Big[\|P_NG(u_{m}^{N})\Delta_m W\|_{L^4}^{8p}|\mathcal{F}_{t_m}\Big]\Big]\\
 			\le&2\tau\mathbb{E}\Big[\mathcal{E}^{2p}(u_{m+1}^{N})\Big]+C(p,\varepsilon )\tau \mathbb{E}[\|u_{m+1}^{N}\|^{4p}]\\
 			&+\frac{1}{2p}4^{-2p}\mathfrak{m}_{8p}C_{d,4}^{8p}\tau\mathbb{E}\Big[\Big(\sum_{k=1}^{\infty}\|G(u_{m}^{N})Q^{\frac{1}{2}}\phi_{k}\|_{\dot{H}^{\frac{d}{4}}}^{2}\Big)^{4p}\Big]\\
 			\le& 2\tau\mathbb{E}\Big[\mathcal{E}^{2p}(u_{m+1}^{N})\Big]+\frac{1}{8p}\mathfrak{m}_{8p}(\sqrt{8}C_{d,4}^2L_7)^{4p}\tau\mathbb{E}\Big[\mathcal{E}^{2p}(u_{m}^{N})\Big]+C(p,\varepsilon )\tau \mathbb{E}[\|u_{m+1}^{N}\|^{4p}]\\
 			&+C(p,C_{d,4},L_{7})\mathbb{E}[\|u_{m}^{N}\|^{4p}]+C(p,C_{d,4},L_{8})\tau .
 		\end{aligned}
 	\end{equation*} 
 	As $\mathbb{E}\big [(J_{1}+J_2+J_4+J_5)^{2^{i+1}}\big]\sim \tau^{2^{i+1}}$ and $\mathbb{E}\big [J_{6}^{2^{i+1}}\big]\sim \tau^{2^{i}}$, we can obtain the following inequality  by choosing $\tau$ small enough
 	\begin{equation*}
 		\begin{aligned}
 			&\big(2p-\frac{3}{2}\big)\mathbb{E}\left[\left(J_1+J_2+J_4+J_5 \right)^2\mathcal{E}^{2p-2}(u_{m}^{N})\right ] \\
 			+&\mathbb{E}\Big[\sum_{i=1}^{k}\mathcal{M}_{i}^{k+1}\left(J_1+J_2+J_4+J_5+J_6\right)^{2^{i+1}} \mathcal{E}^{2p-2^{i+1}}(u_{m}^{N})\Big]\\
 			\le& C(p,C_{d,4},L_5,L_7)\tau^2\mathbb{E}\left [\mathcal{E}^{2p}(u_{m}^{N})\right ]+C(p,C_{d,4},L_6,L_8)\tau^2.
 		\end{aligned}
 	\end{equation*}
 	Take the expectation of \eqref{full energy with p=2^k+1} and note that the term $\left(J_1+J_2+J_3+J_4+J_5+J_6\right)\mathcal{E}^{2p-1}(u_{m}^{N})$ can be estimated similarly to \eqref{J1-J6 estimate p=2} and \eqref{J_3 estimate p=2}.
 	Define 
 	\begin{equation*}
 		\Lambda:=(80p-58)L_5+(320p-236)C_{d,4}^2L_7+\frac{1}{8p}\mathfrak{m}_{8p}(\sqrt{8}C_{d,4}^2L_7)^{4p}.
 	\end{equation*}
   By substitute the above inequalities into \eqref{full energy with p=2^k+1}, we have
 	\begin{equation*}
 		\begin{aligned}
 			&\frac{1}{2p}\Big (1+\frac{8}{5}\tau \Big )\mathbb{E}\left[\mathcal{E}^{2p}(u_{m+1}^{N})-\mathcal{E}^{2p}(u_{m}^{N})\right]
 			+(2\lambda_{1}^{2}-2-\varepsilon)\tau\mathbb{E}\left[\mathcal{E}^{2p}(u_{m+1}^{N})\right] \\ 
 			\le &\Big ( \Lambda +(20p-12)\varepsilon+C(p,C_{d,4},L_5,L_7)\tau\Big )\tau\mathbb{E}\big[\mathcal{E}^{2p}(u_{m}^{N})\big] 
 			+C(p,\varepsilon,L_5)\tau\mathbb{E}\left [\|u_{m+1}^{N}\|^{4p}+\|u_0\|^{8p}\right]\\
 			&+C(p,\varepsilon,C_{d,4},L_{3},L_{7},L_{8})\tau\mathbb{E}[\|u_{m}^{N}\|^{4p}]
 			+C(p,\varepsilon,C_{d,4}, L_{4}, L_6, L_8)(\tau+\tau^2).
 		\end{aligned}
 	\end{equation*}
 	From \eqref{dissipative for H^1} in Assumption \ref{dissipative condition}, we know that $\varepsilon := \frac{2\lambda_{1}^{2}-2-\Lambda}{20p}>0$. Set $\tau_1:=\frac{2\lambda_{1}^{2}-2-\Lambda}{20pC(p,C_{d,4},L_5,L_7)}$. Then, for any $0<\tau<\tau_1$, it follows that $C(p,C_{d,4},L_5,L_7)\tau<\varepsilon$. 
    Set $\kappa_{8p}=\mathbb{E}[\|u_0\|^{8p}]$.
 	By Lemma \ref{p-moment uniform in H}, it follows that there exists a constant $K_{4p}>0$ such that $\mathbb{E}[\|u_{m+1}^{N}\|^{4p}]\le K_{4p}$. Similarly, employing a numerical iteration yields 
 	\begin{equation*}
 		\begin{aligned}
 			\mathbb{E}\left[\mathcal{E}^{2p}(u_{m+1}^{N})\right]\le& e^{-\beta_{3}t_{m+1}}\mathbb{E}\big [ \mathcal{E}^{2p}(u_{0}^{N})\big]
 			+\frac{C(p,\varepsilon,C_{d,4},L_{3},L_5,L_{7},L_{8})(1+\kappa_{8p}+K_{4p})}{2\lambda_{1}^{2}-2-\Lambda-(20p-10)\varepsilon}\le C,
 		\end{aligned}
 	\end{equation*}
 	where $\beta _{3}=\frac{2\lambda_{1}^{2}-2-\Lambda-(20p-10)\varepsilon }{\frac{1}{2p}\big (1+\frac{8}{5}\tau\big )+\big(2\lambda_{1}^{2}-2-\varepsilon\big)\tau }>0$. Thus the proof is complete.

 \end{proof}

\section{Invariant measures and exponential ergodicity}\label{ergodicity}
In this section, we establish the existence and uniqueness of invariant measures for both the Markov process generated by Eq.~\eqref{Cahn-Hilliard equation} and the corresponding numerical Markov chain. Existence is obtained from the uniform-in-time moment estimates via the Krylov--Bogoliubov argument in \cite{hong2019invariant}, whereas uniqueness follows from the exponential contractivity.

Followed the work of Da Prato in \cite{daprato1996}, we introduce the space corresponding to $\alpha$: 
\begin{equation*}
	H_{-1}^{\alpha }:=\left \{ v\in H_{-1}: \left \langle v,\phi_0 \right \rangle=\alpha  \right \}. 
\end{equation*}
Notice that $H_{-1}^{\alpha}$ is an affine subspace of $H_{-1}$, and it is a linear subspace of $H_{-1}$ only when $\alpha=0$.
Let $u^x(t)$ denote the solution of Eq.~\eqref{Cahn-Hilliard equation} with initial condition $x\in H_{-1}^{\alpha}$. The associated Markov semigroup is defined by
\begin{equation*}
	P_t^\alpha\varphi(x)
	:=
	\mathbb E\big[\varphi(u^x(t))\big],
	\quad
	x\in H_{-1}^{\alpha},\quad
	\varphi\in B_b(H_{-1}^{\alpha}).
\end{equation*}
A probability measure $\pi_\alpha$ on
$\big(H_{-1}^{\alpha},\mathcal B(H_{-1}^{\alpha})\big)$ is called
invariant for $\{P_t^\alpha\}_{t\geq0}$ if
\begin{equation*}
	\int_{H_{-1}^{\alpha}}P_t^\alpha\varphi(x)\,
	\pi_\alpha(\mathrm dx)
	=
	\int_{H_{-1}^{\alpha}}\varphi(x)\,
	\pi_\alpha(\mathrm dx),
	\qquad t\geq0,
\end{equation*}
for every $t>0$ and $\varphi\in B_b(H_{-1}^{\alpha})$, where
$B_b(H_{-1}^{\alpha})$ denotes the space of all bounded
Borel-measurable functions on $H_{-1}^{\alpha}$. We further denote by
$C_b(H_{-1}^{\alpha})$ the space of all bounded continuous functions
on $H_{-1}^{\alpha}$. 

We first establish the exponential contraction property required for uniqueness. Let $u^x(t)$ and $u^y(t)$ be the solutions of Eq.~\eqref{Cahn-Hilliard equation} with initial conditions $x$ and $y$, respectively. Set $e(t)=u^{x}(t)-u^{y}(t)$ which satisfies 
\begin{equation}\label{initial error equation}
	\left\{\begin{aligned}
		&de(t)+A^2e(t)dt = -A(F(u^{x}(t))-F(u^{y}(t)))dt+(G(u^{x}(t))-G(u^{y}(t)))dW(t),  \\
		&e(0) = x-y . 
	\end{aligned}\right.
\end{equation}

\begin{lemma}\label{contraction property} 
	Suppose $x,y \in H_{-1}^{\alpha}$, Assumption \ref{diffusion coeffient} and \ref{dissipative condition} hold. Then, for every $0<\gamma_{1}<2\lambda_{1}^{2}-2\lambda_{1}-L_2$, it holds that
	\begin{equation}\label{exponential contraction invariant}
		\mathbb E[\|u^x(t)-u^y(t)\|_{-1}^{2}]
		\leq
		e^{-\gamma_{1}t}\mathbb E[\|x-y\|_{-1}^{2}], \quad t \ge 0.
	\end{equation}
    Consequently, $\{P_t^\alpha\}_{t\geq0}$ is Feller, i.e., if $\varphi\in C_b(H_{-1}^{\alpha})$, then $P_t^\alpha\varphi\in C_b(H_{-1}^{\alpha})$ for all $t>0$.
\end{lemma}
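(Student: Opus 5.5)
We apply the Itô formula to $\|e(t)\|_{-1}^2=\|A^{-1/2}e(t)\|^2$, where $e$ solves \eqref{initial error equation}. Since $x,y\in H_{-1}^{\alpha}$ and both the drift $A(\cdot)$ and the noise (which takes values in $\dot H^{-1}$) preserve the mean, we have $\langle e(t),\phi_0\rangle=0$ for all $t\ge0$. Hence $e(t)\in\dot H^{-1}$, so $\|e\|_{-1}=\|e\|_{\dot H^{-1}}$, and $A^{-1}$ is well defined on $e$. The Itô formula gives
\begin{equation*}
d\|e\|_{-1}^2=-2\|A^{1/2}e\|^2dt-2\langle F(u^x)-F(u^y),e\rangle dt+\|G(u^x)-G(u^y)\|_{\mathcal L_2^{-1}}^2dt+2\langle A^{-1}e,(G(u^x)-G(u^y))dW\rangle .
\end{equation*}
Here we used $\langle A^{-1}e,A^2e\rangle=\|\nabla e\|^2$ and $\langle A^{-1}e,A(F(u^x)-F(u^y))\rangle=\langle e,F(u^x)-F(u^y)\rangle$.

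For the nonlinear term we use the one-sided Lipschitz property of $f(u)=u^3-u$:
\begin{equation*}
(f(a)-f(b))(a-b)=(a-b)^2(a^2+ab+b^2)-(a-b)^2\ge-(a-b)^2 .
\end{equation*}
This yields $-2\langle F(u^x)-F(u^y),e\rangle\le 2\|e\|^2$. Because $e$ has zero mean, interpolation gives
\begin{equation*}
\|e\|^2=\langle A^{1/2}e,A^{-1/2}e\rangle\le\|\nabla e\|\,\|e\|_{-1}\le\frac{1}{2\lambda_1}\|\nabla e\|^2+\frac{\lambda_1}{2}\|e\|_{-1}^2 .
\end{equation*}
We also use the Poincaré-type bound $\|\nabla e\|^2=\|A^{1/2}e\|^2\ge\lambda_1^2\|e\|_{-1}^2$, and the noise term is controlled by \eqref{global Lipschitz in H^-1}: $\|G(u^x)-G(u^y)\|_{\mathcal L_2^{-1}}^2\le L_2\|e\|_{-1}^2$.

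The main obstacle is balancing these constants so that the resulting rate is exactly $2\lambda_1^2-2\lambda_1-L_2$. I would first try the weighted splitting $2\|e\|^2\le\frac{1}{\lambda_1}\|\nabla e\|^2+\lambda_1\|e\|_{-1}^2$. Then
\begin{equation*}
-2\|\nabla e\|^2+\frac1{\lambda_1}\|\nabla e\|^2\le-(2\lambda_1^2-\lambda_1)\|e\|_{-1}^2 ,
\end{equation*}
and adding the remaining contributions gives a drift coefficient of $-(2\lambda_1^2-2\lambda_1-L_2)$, which is positive by \eqref{dissipative for H}. (A slightly cruder split still gives a positive rate; the stated rate requires this particular choice.) Thus the deterministic part satisfies
\begin{equation*}
d\|e\|_{-1}^2\le-(2\lambda_1^2-2\lambda_1-L_2)\|e\|_{-1}^2dt+dM_t,
\end{equation*}
with $M_t$ a local martingale.

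Next, for any $\gamma_1$ below this rate, we apply the product rule to $e^{\gamma_1t}\|e(t)\|_{-1}^2$ and take expectations, removing the martingale term by the stopping-time localization used in Lemma \ref{uniform boundness in L^2}. The moment bounds of Lemma \ref{uniform boundness in L^2} and Lemma \ref{uniform estimate of u(t) in H^1} justify passing to the limit by Fatou's lemma. This gives $e^{\gamma_1t}\mathbb E\|e(t)\|_{-1}^2\le\mathbb E\|x-y\|_{-1}^2$, which is \eqref{exponential contraction invariant}.

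For the Feller property, take $\varphi\in C_b(H_{-1}^\alpha)$ and $y_n\to x$ in $H_{-1}^\alpha$. By \eqref{exponential contraction invariant}, $u^{y_n}(t)\to u^x(t)$ in $L^2(\Omega;H_{-1})$, hence in probability. Continuity and boundedness of $\varphi$ then give $\mathbb E\varphi(u^{y_n}(t))\to\mathbb E\varphi(u^x(t))$ by dominated convergence along subsequences. Therefore $P_t^\alpha\varphi$ is continuous, and it is bounded by $\|\varphi\|_\infty$.
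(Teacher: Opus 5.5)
Your proposal is correct and follows essentially the same route as the paper: It\^o's formula for $e^{\gamma_1 t}\|e(t)\|_{-1}^2$ with stopping-time localization, the one-sided Lipschitz bound on $F$, the split $2\|e\|^2\le\lambda_1^{-1}\|\nabla e\|^2+\lambda_1\|e\|_{-1}^2$, and the $H^{-1}$-Lipschitz bound \eqref{global Lipschitz in H^-1} on $G$, which together give exactly the rate $2\lambda_1^2-2\lambda_1-L_2$. Your Feller argument (convergence in $L^2(\Omega;H_{-1})$, hence in probability, then bounded convergence along subsequences) spells out the step the paper only asserts.
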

\begin{proof}
Since $x$ and $y$ belong to the same affine space $H_{-1}^{\alpha}$ and the dynamics preserve the spatial average, $e(t)$ has zero spatial average. Consequently, $A^{-1}e(t)$ is well defined.
For $\gamma_{1}>0$, applying the It\^o  formula to $\|e(t)\|_{-1}^{2}$ and integrating over $[0,t]$, we obtain
\begin{equation}\label{ergodicity H^{-1}}
	\begin{aligned}
		e^{\gamma_{1}t}\|e(t)\|_{-1}^2-\|x-y\|_{-1}^2=&
		-2\int_{0}^{t}e^{\gamma_{1}r}\|\nabla e(r)\|^2dr-2\int_{0}^{t}e^{\gamma_{1}r}\left \langle e(r),F(u^{x}(r))-F(u^{y}(r))\right \rangle dr\\
		&+\gamma_{1}\int_{0}^{t}e^{\gamma_{1}r}\|e(r)\|_{-1}^2dr+\int_{0}^{t}e^{\gamma_{1}r}\|G(u^{x}(r))-G(u^{y}(r))\|_{\mathcal{L}_{2}^{-1}}^{2}dr\\
		&+2\int_{0}^{t}e^{\gamma_{1}r}\left \langle A^{-1}e(r), (G(u^{x}(r))-G(u^{y}(r)))dW(r)\right \rangle.
	\end{aligned}
\end{equation}
Taking expectation on both sides of \eqref{ergodicity H^{-1}}, and using the classical localization argument based on stopping times, we have
\begin{equation*}
	\mathbb{E}\Big [ \int_{0}^{t}e^{\gamma_{1}r}\left \langle A^{-1}e(r), (G(u^{x}(r))-G(u^{y}(r)))dW(r)\right \rangle \Big ]=0
\end{equation*}
Using the monotonicity condition of $F$ and \eqref{global Lipschitz in H^-1} in Assumption \ref{diffusion coeffient}, together with $\|e(t)\|^2 \le \frac{1}{2\lambda _{1} }\|e(t)\|_{1}^2+\frac{\lambda _{1}}{2 }\|e(t)\|_{-1}^2$, we obtain 
\begin{equation*}
	\begin{aligned}
		\mathbb{E}[\|e(t)\|_{-1}^2]\le e^{-\gamma_{1}t}\mathbb{E}[\|x-y\|_{-1}^2]
		-\left ( 2\lambda_{1}^{2}-2\lambda_{1}-L_2-\gamma_{1}\right )\int_{0}^{t}e^{-\gamma_{1}(t-r)}\mathbb{E}[\|e(r)\|_{-1}^2]dr.
	\end{aligned}
\end{equation*}
We can choose $\gamma_{1}$ to be sufficiently small enough to ensure that $2\lambda_{1}^{2}-2\lambda_{1}-L_2-\gamma_{1}>0$.
Thus the proof is complete.
\end{proof}

Combining the uniform-in-time moment estimate established in
Lemma~\ref{uniform estimate of u(t) in H^1} with the exponential
contraction property in Lemma~\ref{contraction property}, we obtain
the following result.

\begin{theorem}\label{invariant measure exact solution}
	 Let $\alpha\in\mathbb R$ be fixed. Suppose that Assumptions
	\ref{diffusion coeffient} and \ref{dissipative condition} hold.
	Then the Markov semigroup $\{P_t^\alpha\}_{t\geq0}$ associated with
	Eq.~\eqref{Cahn-Hilliard equation} admits a unique invariant
	probability measure $\pi_\alpha$ on $H_{-1}^{\alpha}$.
\end{theorem}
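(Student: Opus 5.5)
We use the Krylov--Bogoliubov procedure for existence and Lemma~\ref{contraction property} for uniqueness. Fix $\alpha\in\mathbb R$ and a deterministic initial datum $x_0\in H^1$ with $\langle x_0,\phi_0\rangle=\alpha$. By the mass conservation established in the proof of Lemma~\ref{uniform boundness in L^2}, $u^{x_0}(t)\in H_{-1}^{\alpha}$ for all $t\ge0$.

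\textbf{Existence.} Define the time-averaged measures
\[
\mu_T(\Gamma):=\frac1T\int_0^T\mathbb P\big(u^{x_0}(t)\in\Gamma\big)\,dt,\qquad \Gamma\in\mathcal B(H_{-1}^{\alpha}).
\]
First we show tightness of $\{\mu_T\}_{T\ge1}$ on $H_{-1}^{\alpha}$. Lemma~\ref{uniform estimate of u(t) in H^1} (with $p=2$) gives $\sup_{t\ge0}\mathbb E[\mathcal E(u^{x_0}(t))]\le C$. Since $\mathcal E(u)\ge\frac12\|\nabla u\|^2$ and the mean is fixed at $\alpha$, this yields $\sup_{t\ge0}\mathbb E\|u^{x_0}(t)\|_1^2\le C$. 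The embedding $H^1\hookrightarrow H_{-1}$ is compact, so the sets $K_R:=\{v\in H_{-1}^{\alpha}:\|v\|_1\le R\}$ are compact in $H_{-1}^{\alpha}$; they are closed because the $H^1$-ball is weakly closed. Chebyshev's inequality then gives $\mu_T(K_R^c)\le C/R^2$ uniformly in $T$, which is tightness. By Prokhorov's theorem some subsequence $\mu_{T_n}$ converges weakly to a probability measure $\pi_\alpha$ on $H_{-1}^{\alpha}$. Because Lemma~\ref{contraction property} shows $P_t^\alpha$ is Feller, the standard Krylov--Bogoliubov argument (as in \cite{hong2019invariant}) applies. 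For $\varphi\in C_b$,
\[
\int P_t^\alpha\varphi\,d\mu_{T_n}-\int\varphi\,d\mu_{T_n}=\frac1{T_n}\Big(\int_{T_n}^{T_n+t}-\int_0^{t}\Big)P_s^\alpha\varphi(x_0)\,ds,
\]
and the right-hand side is $O(2t\|\varphi\|_\infty/T_n)\to0$. Hence $\pi_\alpha$ is invariant for $C_b$ test functions, and by a monotone class argument for all $\varphi\in B_b$.

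\textbf{Uniqueness.} Let $\pi_\alpha,\tilde\pi_\alpha$ be two invariant measures. The main technical point is that we need a finite second moment in $H_{-1}$ to apply the contraction. For the constructed $\pi_\alpha$, lower semicontinuity of $\|\cdot\|_1^2$ passes the uniform bound to the limit. For an arbitrary invariant measure we avoid moments by working with bounded Lipschitz test functions $\varphi$ on $H_{-1}^{\alpha}$. By \eqref{exponential contraction invariant},
\[
|P_t^\alpha\varphi(x)-P_t^\alpha\varphi(y)|\le \mathrm{Lip}(\varphi)\,\min\{e^{-\gamma_1t/2}\|x-y\|_{-1},\,2\|\varphi\|_\infty/\mathrm{Lip}(\varphi)\}.
\]
Then
\[
\Big|\int\varphi\,d\pi_\alpha-\int\varphi\,d\tilde\pi_\alpha\Big|=\Big|\iint\big(P_t^\alpha\varphi(x)-P_t^\alpha\varphi(y)\big)\,\pi_\alpha(dx)\,\tilde\pi_\alpha(dy)\Big|,
\]
and the integrand tends to zero pointwise as $t\to\infty$ while staying bounded, so dominated convergence makes the right-hand side vanish. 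Bounded Lipschitz functions determine probability measures on the metric space $H_{-1}^{\alpha}$, so $\pi_\alpha=\tilde\pi_\alpha$.

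The main obstacle is to verify the hypotheses of the lemmas used here. Lemma~\ref{contraction property} must hold for initial data in $H_{-1}^{\alpha}$ (or at least on a dense set, extended by continuity, which the contraction estimate itself provides). The compactness of $K_R$ inside the affine space $H_{-1}^{\alpha}$ must be checked, which follows from the spectral characterization of the norms $\|\cdot\|_\beta$. The Feller property must be justified for all $x\in H_{-1}^{\alpha}$ rather than only for regular data.
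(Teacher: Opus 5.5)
Your proposal is correct and is essentially the paper's own proof. Existence comes from Krylov--Bogoliubov: tightness of the time averages $\mu_T$ follows from the uniform energy bound of Lemma~\ref{uniform estimate of u(t) in H^1}, Chebyshev's inequality and the compact embedding $H^1\Subset H_{-1}$, and the Feller property comes from Lemma~\ref{contraction property}. Uniqueness comes from integrating $\min\{2\|\varphi\|_\infty,\,e^{-\gamma_1 t/2}\operatorname{Lip}(\varphi)\|x-y\|_{-1}\}$ against the two invariant measures for bounded Lipschitz $\varphi$ and letting $t\to\infty$ by dominated convergence; the points you flag at the end (contraction and Feller property for general data in $H_{-1}^{\alpha}$) are also taken directly from Lemma~\ref{contraction property} in the paper.
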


\begin{proof}
	Fix a deterministic initial value
	$x\in H^1\cap H_{-1}^{\alpha}$. Consider the sequence $\{\mu_T\}_{T>0}$ of probability measure
	\begin{equation*}
		\mu_T(B)
		:=
		\frac{1}{T}\int_0^TP_t^\alpha\mathcal{X}_{B}(x)dt, \quad B\in \mathcal{B}(H_{-1}^{\alpha}), \ \ T>0. 
	\end{equation*}
	For $\mathcal{R}>0$, let $B:=\left \{ u \in H^1\cap H_{-1}^{\alpha}:\|u\|_1 \le \mathcal{R}\right \}\in \mathfrak{B}(H_{-1}^{\alpha})$.
	By the compact embedding $H^1\Subset H_{-1}$, the set $B$ is compact in $H_{-1}^{\alpha}$. Moreover, by the Chebyshev inequality and Lemma~\ref{uniform estimate of u(t) in H^1} yield
	\begin{equation*}
		\mu_T(B^c)=\frac{1}{T}\int_0^T\mathbb{P}(\|u^x(t)\|_1>\mathcal{R})dt \le\frac{1}{T\mathcal{R}^2}\int_0^T\mathbb{E}[\|u^x(t)\|_1^2]dt
		\le C\mathcal{R}^{-2} < \varepsilon.
	\end{equation*}
	Therefore, $\{\mu_T\}_{T>0}$ is tight on
	$H_{-1}^{\alpha}$.

	The contraction estimate in Lemma~\ref{contraction property}
	implies the continuity in probability of $u^x(t)$ with respect to
	$x$ in $H_{-1}^{\alpha}$. Consequently,
	$\{P_t^\alpha\}_{t\geq0}$ is Feller. The Krylov--Bogoliubov theorem
	therefore yields the existence of an invariant probability measure. Let $\mu_1$ and $\mu_2$ be two
	invariant probability measures on $H_{-1}^{\alpha}$. For every
	bounded Lipschitz function $\varphi$ on $H_{-1}^{\alpha}$, the contraction estimate in Lemma~\ref{contraction property} gives
	\begin{equation*}
		\begin{aligned}
			\left|P_t^\alpha\varphi(x)-P_t^\alpha\varphi(y)\right|
			\leq\min\left\{2\|\varphi\|_{\infty},
			e^{-\frac{\gamma_1t}{2}/}\operatorname{Lip}(\varphi)\|x-y\|_{-1}
			\right\}.
		\end{aligned}
	\end{equation*}
	Using the invariance of $\mu_1$ and $\mu_2$, we obtain
	\begin{equation*}
		\begin{aligned}
			\left|\mu_1(\varphi)-\mu_2(\varphi)\right|
			\leq\int_{H_{-1}^{\alpha}}\int_{H_{-1}^{\alpha}}
			\min\left\{
			2\|\varphi\|_{\infty},
			e^{-\gamma_1t/2}
			\operatorname{Lip}(\varphi)\|x-y\|_{-1}
			\right\}\mu_1(\mathrm dx)\mu_2(\mathrm dy).
		\end{aligned}
	\end{equation*}
	Letting $t\to\infty$ and applying the dominated convergence
	theorem, we obtain $\mu_1(\varphi)=\mu_2(\varphi)$.
	Since bounded Lipschitz functions determine probability measures
	on $H_{-1}^{\alpha}$, it follows that $\mu_1=\mu_2$. Hence the
	invariant probability measure is unique.

\end{proof}

An analogous argument applies to the numerical Markov chain. In this
case, existence follows from the uniform-in-time moment estimate established in Lemma \ref{p-moment uniform in H} with $p=2$, whereas uniqueness follows from the corresponding discrete contraction property.

\begin{theorem}\label{invariant measure numerical solution}
	Let $\alpha\in\mathbb R$ and $N\in\mathbb N$ be fixed. Suppose that Assumptions
	\ref{diffusion coeffient} and \ref{dissipative condition} hold. For every $0<\tau<1$, 
	the numerical Markov chain $\{u_m^N\}_{m\geq0}$ admits a unique
	invariant probability measure $\pi_\alpha^{\tau,N}$ on $H_{N,-1}^{\alpha}:=H_N\cap H_{-1}^{\alpha}$.
\end{theorem}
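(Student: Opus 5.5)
We follow the proof of Theorem~\ref{invariant measure exact solution}, now in the finite-dimensional state space $H_{N,-1}^{\alpha}$. The scheme \eqref{full-discrete scheme} conserves mass. Pairing with $\phi_0$ kills $A^2u_{m+1}^N$, $AP_NF(u_{m+1}^N)$, and $P_NG(u_m^N)\triangle_mW$ (the last because $G$ takes values in zero-mean functions). Hence $\langle u_m^N,\phi_0\rangle=\langle u_0^N,\phi_0\rangle=\alpha$, and the chain lives on $H_{N,-1}^{\alpha}$. The implicit step is uniquely solvable for small $\tau$; indeed this is implicit in the mild representation \eqref{full mild solution}. The one-step map $(u_m^N,\triangle_mW)\mapsto u_{m+1}^N$ is therefore measurable, and since $\triangle_mW$ is independent of $\mathcal F_{t_m}$, $\{u_m^N\}$ is a time-homogeneous Markov chain with transition operator $P^{\tau,N}\varphi(x)=\mathbb E[\varphi(u_1^{N,x})]$.

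\textbf{Existence.} Fix a deterministic $x\in H_{N,-1}^{\alpha}$ and form the Krylov--Bogoliubov averages
\[
\mu_M:=\frac1M\sum_{m=0}^{M-1}\mathbb P\circ(u_m^{N,x})^{-1}.
\]
By Lemma~\ref{p-moment uniform in H} with $p=2$, $\sup_m\mathbb E\|u_m^{N,x}\|^2\le C$. Because $H_N$ is finite dimensional, closed balls in $H_{N,-1}^{\alpha}$ are compact, and Chebyshev's inequality gives tightness of $\{\mu_M\}$. The Feller property follows from the discrete contraction below. The Krylov--Bogoliubov theorem then yields an invariant measure $\pi^{\tau,N}_\alpha$.

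Lemma~\ref{p-moment uniform in H} was only proved for $\tau<\tau_0^*$. For the stated range $\tau<1$ we only need the $p=2$ case, Step~1 of that lemma. Its contraction factor $\frac{1+L_3\tau}{1+2\lambda_1(\lambda_1-1)\tau}$ is below $1$ for every $\tau>0$ by \eqref{dissipative for H}, so the second-moment bound holds for all $\tau$.

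\textbf{Uniqueness via discrete contraction.} Let $e_m=u_m^{N,x}-u_m^{N,y}$ with $x,y\in H_{N,-1}^{\alpha}$. Then $e_m$ has zero mean, so $A^{-1}e_m$ is defined. Subtracting the two schemes and testing with $A^{-1}e_{m+1}$, the identity $(a-b,a)=\frac12(\|a\|^2-\|b\|^2+\|a-b\|^2)$ in the $\|\cdot\|_{-1}$ inner product gives
\begin{align*}
&\tfrac12\bigl(\|e_{m+1}\|_{-1}^2-\|e_m\|_{-1}^2+\|e_{m+1}-e_m\|_{-1}^2\bigr)+\tau\|\nabla e_{m+1}\|^2+\tau\langle F(u^x_{m+1})-F(u^y_{m+1}),e_{m+1}\rangle\\
&\quad=\langle A^{-1}P_N(G(u^x_m)-G(u^y_m))\triangle_mW,\,e_{m+1}-e_m\rangle+\langle A^{-1}P_N(G(u^x_m)-G(u^y_m))\triangle_mW,\,e_m\rangle .
\end{align*}
We treat the terms as follows.
\begin{itemize}
\item The nonlinear term is bounded below using the one-sided Lipschitz bound $\langle F(u)-F(v),u-v\rangle\ge-\|u-v\|^2$.
\item The $\|e_{m+1}\|^2$ produced this way is absorbed via $\|e\|^2\le\frac1{2\lambda_1}\|\nabla e\|^2+\frac{\lambda_1}2\|e\|_{-1}^2$ and $\|\nabla e\|^2\ge\lambda_1^2\|e\|_{-1}^2$.
\item The first noise term is handled by Young's inequality against $\frac12\|e_{m+1}-e_m\|_{-1}^2$; its expectation is then controlled by \eqref{global Lipschitz in H^-1}, giving $\le\frac12L_2\tau\,\mathbb E\|e_m\|_{-1}^2$.
\item The second noise term has zero expectation.
\end{itemize}
Taking expectations yields
\[
\mathbb E\|e_{m+1}\|_{-1}^2\le\frac{1+L_2\tau}{1+2(\lambda_1^2-\lambda_1)\tau}\,\mathbb E\|e_m\|_{-1}^2,
\]
and by \eqref{dissipative for H} the ratio is $\le e^{-c\tau}$ for some $c>0$ uniformly in $\tau\in(0,1)$. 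This geometric contraction gives the Feller property. It also gives uniqueness exactly as in Theorem~\ref{invariant measure exact solution}: for bounded Lipschitz $\varphi$ and two invariant measures $\mu_1,\mu_2$,
\[
|\mu_1(\varphi)-\mu_2(\varphi)|\le\iint\min\{2\|\varphi\|_\infty,\;e^{-cm\tau/2}\mathrm{Lip}(\varphi)\|x-y\|_{-1}\}\,\mu_1(dx)\,\mu_2(dy)\to0
\]
as $m\to\infty$.

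The main obstacle is the contraction step. The pieces of $\|e_{m+1}\|^2$ coming from the nonlinearity sit at the implicit index $m+1$, while the noise contributes at index $m$. The implicit structure is what handles this, putting all dissipation on $e_{m+1}$, and the numerator/denominator form of the ratio must be tracked carefully so that no smallness condition on $\tau$ beyond $\tau<1$ is needed. Well-posedness of the implicit step for such $\tau$ should also be checked. It follows from strict monotonicity of $v\mapsto v+\tau A^2v+\tau AP_NF(v)$ in the $\dot H^{-1}$ inner product, which holds once $\tau<4$, since $\tau\lambda^2-\tau\lambda+1>0$ for all $\lambda$ then. Browder--Minty in finite dimensions then gives a unique, continuous solution map.
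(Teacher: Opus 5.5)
Your proof is correct and follows the route the paper indicates but omits: existence via Krylov--Bogoliubov, using the $p=2$ bound of Lemma~\ref{p-moment uniform in H} and the finite-dimensionality of $H_{N,-1}^{\alpha}$, and uniqueness via the discrete $\dot H^{-1}$ contraction $\mathbb E\|e_{m+1}\|_{-1}^2\le\frac{1+L_2\tau}{1+2(\lambda_1^2-\lambda_1)\tau}\,\mathbb E\|e_m\|_{-1}^2$, which is the exact discrete analogue of Lemma~\ref{contraction property}. Your additional checks are also correct and fill gaps the paper leaves implicit: Step~1 of Lemma~\ref{p-moment uniform in H} needs no restriction $\tau<\tau_0^*$, and the implicit step is uniquely solvable for $\tau<4$ (the operator is in fact strongly monotone with constant $2\sqrt{\tau}-\tau$, which also gives the coercivity Browder--Minty requires).
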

The proof is similar to that of Theorem~\ref{invariant measure exact solution} and is therefore omitted.

 \section{Uniform-in-time strong convergence rates}\label{strong convergence}
 
 In this section, we establish the uniform-in-time strong convergence rates of the fully discrete scheme. Our goal is to derive error estimates that are uniform with respect to both the discretization parameters and time variable $t$.

 To facilitate the error analysis, we first establish several stability and approximation properties of the discrete solution operator $S_{\tau,N}^{m}$. These estimates provide the required smoothing properties of the discrete solution operator and quantify its approximation error with respect to the continuous solution operator, which are essential for the subsequent convergence analysis.

 \begin{lemma}\label{semigroup full discrte}
 	For any $m>0$ and $\varphi \in H$, there exists $0<c<1$ such that
 	\begin{equation*}
 		\begin{aligned}
 			\|A^\nu S_{\tau,N}^m \varphi\|\le Ct_m^{-\frac{\nu }{2}}e^{-\frac{w(\nu)}{2} \lambda_1^2t_m}\| \varphi\| +k(\nu)|\langle \varphi, \phi_0 \rangle|, \qquad \nu \in [0,2), \\
 		\end{aligned}
 	\end{equation*}
 	where the function $w(x): [0,2) \to \mathbb{R}$ is defined by $w(x)=c$ when $\nu \in [0,1]$ and $w(x)=(2-\nu)c$ for $\nu \in (1,2)$.
 	
 \end{lemma}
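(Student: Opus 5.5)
The plan is to diagonalise $S_{\tau,N}^m$ in the eigenbasis $\{\phi_k\}$ and reduce the estimate to a scalar inequality for each eigenvalue. Writing $\varphi=\sum_{k\ge0}\langle\varphi,\phi_k\rangle\phi_k$, we have
\begin{equation*}
\|A^\nu S_{\tau,N}^m\varphi\|^2=\sum_{k=1}^{N}\lambda_k^{2\nu}(1+\tau\lambda_k^2)^{-2m}|\langle\varphi,\phi_k\rangle|^2+k(\nu)^2|\langle\varphi,\phi_0\rangle|^2 .
\end{equation*}
The $k=0$ mode contributes only when $\nu=0$, because $\lambda_0=0$ and $S_{\tau,N}^m\phi_0=\phi_0$; this gives the term $k(\nu)|\langle\varphi,\phi_0\rangle|$. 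It therefore suffices to prove, uniformly in $\tau$, $m\ge1$ and $x:=\tau\lambda_k^2\ge\tau\lambda_1^2$,
\begin{equation*}
\lambda^{\nu}(1+\tau\lambda^2)^{-m}\le C t_m^{-\nu/2}e^{-\frac{w(\nu)}{2}\lambda_1^2 t_m},\qquad \lambda\ge\lambda_1 .
\end{equation*}
Summing over $k$ and using Parseval then gives the claim.

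The first step is to split the resolvent power into a decay factor and a smoothing factor. Write $(1+x)^{-m}=(1+x)^{-\theta m}(1+x)^{-(1-\theta)m}$ with $\theta\in(0,1)$ to be chosen.

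For the smoothing factor, use $\lambda^\nu=\tau^{-\nu/2}x^{\nu/2}$ and $t_m=m\tau$. The required bound is then $x^{\nu/2}(1+x)^{-(1-\theta)m}\le C m^{-\nu/2}$. This follows from $(1+x)^{m'}\ge 1+m'x$ for $m'\ge1$, and from $(1+x)^{m'}\ge (1+m'x)$ in generalized form, giving $x^{\nu/2}(1+x)^{-m'}\le C(\nu)(m')^{-\nu/2}$ provided $\nu/2\le m'$. When $\nu\in[0,1]$ we have $\nu/2\le 1/2$, and any $m\ge1$ with a fixed fraction $\theta$ works after absorbing constants. When $\nu\in(1,2)$ the factor $x^{\nu/2}(1+x)^{-m'}$ with small $m'$ is only bounded if $m'\ge\nu/2$. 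This is why the decay rate must degrade like $(2-\nu)$: we take $1-\theta$ close to $\nu/2$, i.e.\ $\theta\sim(2-\nu)/2$, and for $m=1$ use $x^{\nu/2}(1+x)^{-1}\le x^{\nu/2-1}\cdot$ const, together with the bound $x^{\nu/2}(1+x)^{-m(1-\theta)}$ being maximised at $x\sim \nu/(2m(1-\theta)-\nu)$.

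For the decay factor, since $x\ge\tau\lambda_1^2$, we have $(1+x)^{-\theta m}\le(1+\tau\lambda_1^2)^{-\theta m}=e^{-\theta m\log(1+\tau\lambda_1^2)}$. The condition $\tau<1/\lambda_1^2$, as used in the main theorem, or more generally a bounded $\tau\lambda_1^2$, gives $\log(1+y)\ge c_0y$ on $[0,1]$ with $c_0=\log2$. Hence $(1+x)^{-\theta m}\le e^{-c_0\theta\lambda_1^2t_m}$, and we set $c:=\min\{2c_0\theta,\,\cdot\}<1$, with $w(\nu)=c$ for $\nu\le1$ and $w(\nu)=(2-\nu)c$ for $\nu\in(1,2)$.

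The main obstacle is the range $\nu\in(1,2)$ at small $m$. The backward Euler resolvent $(1+x)^{-m}$ does not smooth of order $\nu$ uniformly when $m$ is comparable to $\nu/2$, so the split exponent must be tuned. I would handle this by choosing $\theta=(2-\nu)/4$, which gives $m(1-\theta)\ge m(2+\nu)/4>\nu/2$. It then remains to check the elementary calculus bound $\sup_{x>0}x^{a}(1+x)^{-b}\le C(a,b)\,b^{-a}$ for $b>a$, with $C$ uniform for $b\ge(2+\nu)/4$. Tracking this constant, which blows up as $\nu\uparrow2$, is the delicate part. For $\nu\in[0,1]$ a fixed $\theta=1/2$ suffices.
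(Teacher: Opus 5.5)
Your proposal is correct and follows essentially the same route as the paper's proof. Both arguments diagonalise via Parseval. Both bound one part of the resolvent power by its value at $\lambda_1$, using $\log(1+y)\ge cy$ on $[0,1]$, which requires $\tau\lambda_1^2\le1$. Both control the remaining part by Bernoulli's inequality, splitting the exponent in half for $\nu\le1$ and in a $(2-\nu)$-proportional way for $\nu\in(1,2)$.

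The only difference is in the choice of split for $\nu\in(1,2)$. The paper writes the squared norm with the borderline split $(1+\tau\lambda_k^2)^{-(2-\nu)m}(1+\tau\lambda_k^2)^{-\nu m}$ and uses $(1+x)^{\nu m}\ge(1+mx)^{\nu}$, which gives smoothing constant $1$. More generally, $x^{a}(1+x)^{-b}\le (a/b)^{a}$ for all $b\ge a>0$, so the blow-up as $\nu\uparrow2$ that you anticipate does not occur, and no constant tracking is needed.
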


 Denote by $E_{\tau,N}(t)=S(t)-S_{\tau,N}^{m+1}$ for $t \in(t_m, t_{m+1}]$. The following estimates for $E_{\tau,N}(t)$ will be frequently used in the error analysis.
 
 \begin{lemma}\label{error of semigrroup}
 	For $0<c<1$, the operator $E_{\tau,N}(t)$ satisfies the following estimates
 	\begin{align}
 		&\left\|E_{\tau,N}(t) \varphi\right\| \leq C\big(\lambda _{N+1}^{-\frac{\mu}{2} }+\tau^{\frac{\mu }{4}}\big) t^{-\frac{\mu -\nu }{4}}\|\varphi\|_{\nu },
 		&&0\le \nu \le \mu \le 4, \varphi \in H^{\nu }, \label{positive full semigroup}\\
 		&\left\|E_{\tau,N}(t) \varphi\right\| \leq C\big(\lambda _{N+1}^{-\frac{\mu}{2} }+\tau^{\frac{\mu}{4}}\big) t^{-\frac{\mu-\nu}{4}}e^{-\frac{c\lambda _{1}^{2} }{2}t }\|\varphi\|_\nu,
 		&&\mu \in [0,2], \nu \in [-2,0], \varphi \in H^\nu. \label{nonnegative full semigroup}\\
 		&\Big( \int_{0}^{t}\|E_{\tau,N}(r) \varphi\|_{\mathcal{L}(\dot{H})}^{2}dr \Big)^{\frac{1}{2}}\le C(\lambda _{N+1}^{-1}+\tau ^{\frac{1}{2}}), && \varphi \in H. \label{integral full semigroup}
 	\end{align}
 \end{lemma}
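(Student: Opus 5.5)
Since both operators are diagonal in the eigenbasis $\{\phi_k\}$, I would reduce everything to scalar estimates on the multipliers. Write $\varphi=\sum_k \hat\varphi_k\phi_k$. For $t\in(t_m,t_{m+1}]$,
\begin{equation*}
E_{\tau,N}(t)\varphi=\sum_{k>N} e^{-\lambda_k^2 t}\hat\varphi_k\phi_k+\sum_{k=0}^{N}\big(e^{-\lambda_k^2 t}-(1+\tau\lambda_k^2)^{-(m+1)}\big)\hat\varphi_k\phi_k .
\end{equation*}
For $k=0$ both multipliers equal $1$, so the mean mode drops out. This is why a factor $e^{-c\lambda_1^2t/2}$ can appear and why negative-order norms $\nu\in[-2,0]$ make sense in \eqref{nonnegative full semigroup}. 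The norms are computed by Parseval, so it suffices to bound, for each $k\ge1$, the scalar error multiplied by $\lambda_k^{-\nu/2}$ uniformly in $k$.

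\textbf{Spatial part.} For $k>N$ I would use $x^{a}e^{-x}\le C_a e^{-x/2}$ with $x=\lambda_k^2t$. This gives
\begin{equation*}
e^{-\lambda_k^2t}\lambda_k^{-\nu/2}\le C t^{-\frac{\mu-\nu}{4}}\lambda_k^{-\mu/2}e^{-\lambda_k^2t/2}\le Ct^{-\frac{\mu-\nu}{4}}\lambda_{N+1}^{-\mu/2}e^{-\lambda_1^2 t/2},
\end{equation*}
which holds for any $\mu\ge\nu$ (including negative $\nu$) and carries the exponential factor.

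\textbf{Temporal part.} For $1\le k\le N$ I would split the multiplier as
\begin{equation*}
e^{-\lambda_k^2t}-e^{-\lambda_k^2t_{m+1}}+e^{-\lambda_k^2t_{m+1}}-(1+\tau\lambda_k^2)^{-(m+1)}.
\end{equation*}
The first difference is bounded by $e^{-\lambda_k^2 t}\min\{1,\lambda_k^2\tau\}$. For the second, the standard rational-approximation estimate for backward Euler applies. Writing $z=\tau\lambda_k^2$ and $n=m+1$, one has
\begin{equation*}
|e^{-nz}-(1+z)^{-n}|\le C\min\{1,nz^2\}(1+z)^{-cn}
\end{equation*}
with some $c\in(0,1)$. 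I would prove this by writing the difference as a telescoping sum of $n$ one-step errors, $|e^{-z}-(1+z)^{-1}|\le Cz^2/(1+z)$, and by using $(1+z)^{-1}\le e^{-cz}$ for $z\le z_0$ together with a trivial bound for large $z$.

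Interpolating $\min\{1,nz^2\}\le (nz^2)^{\theta}$ with $\theta=\mu/4\le1$, and absorbing $t_{m+1}^{\theta}\lambda_k^{2\theta}$ into the decay through $(t\lambda_k^2)^{a}e^{-c t\lambda_k^2}\le C$, gives the factor $\tau^{\mu/4}t^{-(\mu-\nu)/4}\lambda_k^{\nu/2}$. Here I use $t\simeq t_{m+1}$ since $t>t_m$ and $m+1\ge1$; the first step $m=0$ is handled separately with $t\le\tau$. This yields \eqref{positive full semigroup}.

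\textbf{Exponential factor.} The main obstacle is keeping the exponential weight $e^{-c\lambda_1^2t/2}$ in \eqref{nonnegative full semigroup}, uniformly in $k$ and $\tau$. The bound $(1+z)^{-cn}\le e^{-c'\lambda_k^2 t_n}$ holds only when $z$ is bounded. When $\tau\lambda_k^2$ is large I would instead use $(1+z)^{-n}\le(1+z)^{-1}\cdot(1+\tau\lambda_1^2)^{-(n-1)}$ and the constraint $\tau<1/\lambda_1^2$. The first factor then produces the power of $\tau$ and $t^{-(\mu-\nu)/4}$, and the second gives $e^{-c\lambda_1^2 t}$. Restricting to $\mu\le2$ there leaves room to pay for the negative $\nu$ without losing the decay.

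\textbf{Integral estimate.} Finally, \eqref{integral full semigroup} follows by squaring the modewise bounds with $\nu=0,\mu=2$ and integrating in time. One has $\int_0^\infty e^{-2\lambda_k^2r}\,dr\le C\lambda_k^{-2}\le C\lambda_{N+1}^{-2}$ for $k>N$. For the temporal part, $\sum_m\tau\,\min\{1,(m+1)z^2\}^2(1+z)^{-2c(m+1)}\le C\tau$ uniformly in $z$, by summing the geometric series in $m$. The integral is therefore $O(\lambda_{N+1}^{-2}+\tau)$, and taking the square root gives the claim.
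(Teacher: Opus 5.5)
Your decomposition is the one the paper uses to prove \eqref{nonnegative full semigroup}, which is the only one of the three estimates it proves; the other two are quoted from the literature. The pieces are the tail $k>N$, the in-step shift $S(t)-S(t_{m+1})$, and the rational error $P_NS(t_{m+1})-S_{\tau,N}^{m+1}$, with $z=\tau\lambda_k^2\le1$ handled by telescoping and $r(z)\le e^{-cz}$.

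\textbf{The gap is in the regime $z>1$}, which you correctly single out. Write $n=m+1$ and use $t\le t_n=n\tau$. What must be shown there is
\begin{equation*}
\lambda_k^{-\nu/2}(1+z)^{-n}\le C\tau^{\frac{\mu}{4}}t_n^{-\frac{\mu-\nu}{4}}e^{-\frac{c\lambda_1^2}{2}t_n}
\quad\Longleftrightarrow\quad
z^{-\nu/4}(1+z)^{-n}\le C\,n^{-\frac{\mu-\nu}{4}}e^{-\frac{c\lambda_1^2}{2}t_n},
\end{equation*}
so the decay in $n$ must come from the high power of $(1+z)^{-1}$.

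Your factorization $(1+z)^{-n}\le(1+z)^{-1}(1+\tau\lambda_1^2)^{-(n-1)}$ keeps a single factor $(1+z)^{-1}$, which is merely bounded. The factor $(1+\tau\lambda_1^2)^{-(n-1)}$ decays in $t_n$ but not in $n$. For example, with $\mu=2$, $\nu=0$ and $n\approx(\tau\lambda_1^2)^{-1}$, your bound is of order $1$ where order $(\tau\lambda_1^2)^{1/2}$ is required. Restricting to $\mu\le2$ does not change this.

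The remedy is to split the power evenly. Telescoping together with $e^{-z}\le(1+z)^{-1}$ gives $|e^{-nz}-(1+z)^{-n}|\le\min\{2,\tfrac{nz^2}{2}\}(1+z)^{-n}$ for all $z>0$. For $z\ge1$, $\nu\ge-2$ and $\tau\lambda_1^2\le1$,
\begin{equation*}
z^{-\nu/4}(1+z)^{-n}\le (1+z)^{-\frac{n-1}{2}}(1+\tau\lambda_1^2)^{-\frac{n}{2}}\le 2^{-\frac{n-1}{2}}e^{-\frac{\ln 2}{2}\lambda_1^2t_n}\le C\,n^{-\frac{\mu-\nu}{4}}e^{-\frac{\ln 2}{2}\lambda_1^2t_n}.
\end{equation*}
The first inequality uses $z^{-\nu/4}\le(1+z)^{1/2}$, which holds for $z\ge1$ and $\nu\ge-2$. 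This is exactly the paper's step $r(\tau\lambda_k^2)^{2m+2}\le r(\tau\lambda_1^2)^{m+1}r(\tau\lambda_k^2)^{m+1}$ in the term $I_{22}$.

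\textbf{The same regime is also uncovered in your proof of \eqref{positive full semigroup}.} For large $z$ no factor $e^{-c\lambda_k^2t_n}$ is available. Moreover, the interpolated quantity $(nz^2)^{\mu/4}(1+z)^{-cn}\lambda_k^{-\nu/2}$ is not bounded by $C\tau^{\mu/4}t_n^{-(\mu-\nu)/4}$: take $n=1$, $\mu=4$, $\nu=0$ and let $z\to\infty$. In this regime you should not interpolate. Instead use $\min\{1,nz^2\}=1$, $\lambda_k^{-\nu/2}=\tau^{\nu/4}z^{-\nu/4}\le\tau^{\nu/4}$, and $(1+z)^{-n}\le2^{-n}\le Cn^{-(\mu-\nu)/4}$.

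\textbf{On \eqref{integral full semigroup}.} Squaring the $\nu=0$, $\mu=2$ bound produces the factor $r^{-1}$, which is not integrable at $0$, so only your direct computation is relevant. That computation bounds $\sup_k\int_0^\infty|e_k(r)|^2\,dr$, where $e_k(r)$ is the $k$-th multiplier of $E_{\tau,N}(r)$. Hence it gives $\int_0^t\|E_{\tau,N}(r)\varphi\|^2dr\le C(\lambda_{N+1}^{-2}+\tau)\|\varphi\|^2$.

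If the estimate is meant with the operator norm, the supremum over $k$ must sit inside the integral. That is how it is used for $J_{31}$, where $G(u(s))$ varies with $s$. The operator-norm version follows from the pointwise bounds $\sup_{z>0}|e^{-nz}-(1+z)^{-n}|\le C/n$ and $\sup_k e^{-\lambda_k^2r}\min\{1,\lambda_k^2\tau\}\le\min\{1,\tau/r\}$. These give $\sum_n\tau n^{-2}+\int_0^\infty\min\{1,\tau/r\}^2\,dr\le C\tau$.
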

 
The proofs of the preceding lemmas are given in \ref{semigroup for full discrete} and \ref{solution operator difference}, respectively.

 For the subsequent error analysis, we introduce the auxiliary process
\begin{equation}\label{auxiliary full mild solution}
	\begin{aligned}
		\tilde{u}_{m+1}^{N}=S_{\tau,N}^{m+1}u_{0}^{N}-\tau\sum_{i=0}^{m}S_{\tau,N}^{m+1-i}A P_N F(u(t_{i+1}))+\sum_{i=0}^{m}S_{\tau,N}^{m+1-i}P_N G(u(t_{i}))\triangle_i W.
	\end{aligned}
\end{equation}
By the triangle inequality, the total error can then be bounded by
\begin{equation}\label{total error decomposition}
	\|u(t_{m+1})-u_{m+1}^{N}\|\le \|u(t_{m+1})-\tilde{u}_{m+1}^{N}\|+\|\tilde{u}_{m+1}^{N}-u_{m+1}^{N}\|.
\end{equation}
Let $2 \le p \le 24$ and fix $\gamma \in \big(\max\{1, \frac{d}{2}\}, 2\big )$. Suppose that $u_0\in L^{6p} ( \Omega;H^{\gamma})$, Assumptions \ref{diffusion coeffient} and \ref{dissipative condition} hold. Then there exists a constant $C>0$, independent of $N$, $\tau$, and $m$, such that, for every $N\in\mathbb N$, 
 \begin{equation}\label{auxilary solution stability}
 		\sup_{m\geq0}\|\tilde{u}_{m}^{N}\|_{L^{p}(\Omega; H^{\gamma})} \le C.		
 \end{equation}
This estimate follows from arguments analogous to those used in the proof of Theorem \ref{continous uniform moment}, together with the stability and smoothing properties of the discrete solution operator in Lemma \ref{semigroup full discrte}. The details are omitted.

Based on the properties of $S_{\tau,N}^{m+1}$ and $E_{\tau,N}(t)$ established in Lemmas \ref{semigroup full discrte} and \ref{error of semigrroup}, respectively, we first estimate the term $\|u(t_{m+1})-\tilde{u}_{m+1}^{N}\|$ in the following theorem. 

 \begin{theorem}\label{error estimate with auxilary solution}
 	Let $2 \le p\le 8$ and $u_{0} \in L^{144}(\Omega;H^2)$. Suppose that Assumptions \ref{diffusion coeffient} and \ref{dissipative condition} hold. Then we have
 	\begin{equation*}
 		\left \|u(t_{m+1})-\tilde{u}_{m+1}^{N}\right \|_{L^p(\Omega;H)} \le C\big(\lambda _{N+1}^{-1}+\tau^{\frac{1}{2}}\big). 
 	\end{equation*}
 \end{theorem}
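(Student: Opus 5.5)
We compare the mild formulation \eqref{mild solution} at $t=t_{m+1}$ with the auxiliary process \eqref{auxiliary full mild solution}. Writing $t_{m+1}-s\in(t_{m-i},t_{m+1-i}]$ for $s\in(t_i,t_{i+1}]$, we split $u(t_{m+1})-\tilde u_{m+1}^N=\mathcal{J}_1+\mathcal{J}_2+\mathcal{J}_3$. The initial term is $\mathcal{J}_1=E_{\tau,N}(t_{m+1})u_0+S_{\tau,N}^{m+1}(I-P_N)u_0$; the second piece vanishes, since $S_{\tau,N}$ already contains $P_N$. The drift term is
\[
\mathcal{J}_2=-\sum_{i=0}^{m}\int_{t_i}^{t_{i+1}}\Big[E_{\tau,N}(t_{m+1}-s)AF(u(s))+S_{\tau,N}^{m+1-i}AP_N\big(F(u(s))-F(u(t_{i+1}))\big)\Big]ds .
\]
The noise term $\mathcal{J}_3$ is defined analogously, with $G(u(s))$ versus $G(u(t_i))$ and $dW(s)$ in place of $ds$. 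For $\mathcal{J}_1$ we apply \eqref{positive full semigroup} with $\mu=\nu=2$. This gives $\|\mathcal{J}_1\|_{L^p(\Omega;H)}\le C(\lambda_{N+1}^{-1}+\tau^{1/2})\|u_0\|_{L^p(\Omega;H^2)}$.

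\textbf{Drift term.} For the first part of $\mathcal{J}_2$, note that $AF(u)$ has zero mean. We use \eqref{nonnegative full semigroup} with $\mu=2$ and $\nu=-2+\gamma'$ for some small $\gamma'>0$, acting on $F(u(s))\in H^{\gamma'}$. This produces the factor $(\lambda_{N+1}^{-1}+\tau^{1/2})(t_{m+1}-s)^{-(4-\gamma')/4}e^{-c\lambda_1^2(t_{m+1}-s)/2}$, whose time integral is bounded uniformly in $m$. We then need $\sup_s\|F(u(s))\|_{L^p(\Omega;H^{\gamma'})}\le C$. This follows from Theorem~\ref{continous uniform moment}, using $\gamma>d/2$ (so that $H^\gamma$ is an algebra embedded in $L^\infty$), via $\|F(u)\|_{\gamma}\le C(1+\|u\|_\gamma^3)$; Hölder's inequality consumes the $3p$-th moments, which is why $u_0\in L^{144}$ is assumed. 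For the second part of $\mathcal{J}_2$, the local Lipschitz bound $\|F(u)-F(v)\|\le C(1+\|u\|_{L^\infty}^2+\|v\|_{L^\infty}^2)\|u-v\|$ and Lemma~\ref{semigroup full discrte} with $\nu=1$ give a factor $t_{m+1-i}^{-1/2}e^{-c\lambda_1^2t_{m+1-i}/2}$. Multiplied by the Hölder regularity \eqref{time regularity}, which contributes $(t_{i+1}-s)^{1/2}\le\tau^{1/2}$, the sum $\tau\sum_i t_{m+1-i}^{-1/2}e^{-ct_{m+1-i}}$ is bounded uniformly in $m$. The zero-mean property removes the $k(\nu)$ term.

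\textbf{Noise term.} For $\mathcal{J}_3$ we apply the BDG inequality. The $E_{\tau,N}$-part is bounded by \eqref{integral full semigroup}, or more precisely by \eqref{nonnegative full semigroup} with $\mu=1$, $\nu=0$ and exponential decay, together with \eqref{linear growth in H} and Lemma~\ref{uniform boundness in L^2}. This gives the factor $(\lambda_{N+1}^{-1}+\tau^{1/2})$. The part involving $G(u(s))-G(u(t_i))$ uses \eqref{global Lipschitz in H}, \eqref{time regularity} and the contractivity $\|S_{\tau,N}^{m+1-i}\|\le e^{-c\lambda_1^2t_{m+1-i}/2}$ on $\dot H$. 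This yields $\big(\sum_i\tau e^{-c\lambda_1^2t_{m+1-i}}\tau\big)^{1/2}\le C\tau^{1/2}$.

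\textbf{Main obstacle.} The hard step is keeping every constant independent of $m$. The first part of $\mathcal{J}_2$ needs a time singularity of order strictly less than $1$ combined with exponential decay, and this forces the choice of $\nu$ in \eqref{nonnegative full semigroup} and the $H^{\gamma'}$-moment bound on $F(u)$. If \eqref{nonnegative full semigroup} turns out to be insufficient, the fallback is to split $E_{\tau,N}=(S-S P_N)+(SP_N-S_{\tau,N}^{m+1})$ and estimate the spatial and temporal parts separately, using \eqref{semigroup_1} for the first piece.
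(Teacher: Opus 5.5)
Your decomposition, and your estimates for the initial term, both drift pieces and the $G(u(s))-G(u(t_i))$ noise piece, coincide with the paper's proof. Using $\nu=-2+\gamma'$ in \eqref{nonnegative full semigroup} for the first drift piece, instead of the paper's $\nu=-1$, is a harmless variation. You get the singularity $(t_{m+1}-s)^{-1+\gamma'/4}$ instead of $(t_{m+1}-s)^{-3/4}$, and in exchange you only need $\|F(u)\|_{\gamma'}$ rather than $\|F(u)\|_{1}$.

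\textbf{The failing step.} Your ``more precise'' treatment of the stochastic $E_{\tau,N}$-term does not give the claimed rate.
\begin{itemize}
\item With $\mu=1$, $\nu=0$, \eqref{nonnegative full semigroup} gives $\|E_{\tau,N}(t)\|_{\mathcal L(\dot H)}\le C(\lambda_{N+1}^{-1/2}+\tau^{1/4})\,t^{-1/4}e^{-c\lambda_1^2t/2}$.
\item After the Burkholder--Davis--Gundy inequality you square this, integrate in time, and take the square root. The result is only $C(\lambda_{N+1}^{-1/2}+\tau^{1/4})$, i.e.\ half the claimed order. You appear to have read off the rate of the squared quantity.
\item Taking $\mu=2$ restores the prefactor $\lambda_{N+1}^{-1}+\tau^{1/2}$, but the squared singularity becomes $t^{-1}$, which is not integrable.
\end{itemize}
So no pointwise-in-time bound for $E_{\tau,N}$ acting on $\dot H$-data yields the full order here.

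\textbf{The fix.} The rate for this term comes from the square-function estimate \eqref{integral full semigroup}. It integrates the error in time before estimating; mode by mode, the spatial part rests on $\int_0^\infty e^{-2\lambda_k^2 r}\,dr=\frac{1}{2}\lambda_k^{-2}$. Because $G(u(s))$ depends on the integration variable, you need it in the operator-norm form $\int_0^{t}\|E_{\tau,N}(r)\|_{\mathcal L(\dot H)}^2\,dr\le C(\lambda_{N+1}^{-1}+\tau^{1/2})^2$. Apply it after pulling $\|E_{\tau,N}(t_{m+1}-s)\|_{\mathcal L(\dot H)}$ out of the $\mathcal L_2^0$-norm. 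Then bound $\sup_s\|G(u(s))\|_{L^p(\Omega;\mathcal L_2^0)}$ via \eqref{linear growth in H} and Lemma~\ref{uniform boundness in L^2}.

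This is exactly what the paper does for this term. If you commit to your first option, \eqref{integral full semigroup}, and drop the $\mu=1$, $\nu=0$ alternative, your argument is complete and matches the paper's.
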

 \begin{proof}
 	According to the definition of $u(t_{m+1})$ and $\tilde{u}_{m+1}^{N}$, it follows that
 	\begin{equation*}
 		\begin{aligned}
 			&\left \|u(t_{m+1})-\tilde{u}_{m+1}^{N}\right \|_{L^p(\Omega;H)}\\
 			\le&\left\|E_{\tau,N}(t_{m+1})u_0\right \|_{L^p(\Omega;H)} 
 			+
 			 \Big\|\int_{0}^{t_{m+1}}S(t_{m+1}-s)AP_NF(u(s))ds-\tau \sum_{i=0}^{m}S_{\tau,N}^{m+1-i}AF(u(t_{i+1}))\Big \|_{L^p(\Omega;H)}
 			\\
 			&+\Big\|\int_{0}^{t_{m+1}}S(t_{m+1}-s)G(u(s))dW(s)- \sum_{i=0}^{m}S_{\tau,N}^{m+1-i}P_NG(u(t_{i}))\triangle_i W \Big \|_{L^p(\Omega;H)}\\
        	=&:J_1+J_2+J_3.
    	\end{aligned}
      \end{equation*}
 	By \eqref{positive full semigroup} in Lemma \ref{error of semigrroup} with $\mu =\nu =2$, one can deduce that 
 	\begin{equation*}
 		J_1 \le C \big(\lambda _{N+1}^{-1 }+\tau^{\frac{1}{2}}\big)\|u_0\|_{L^p(\Omega;H^{2})}.
 	\end{equation*}
 	For the second term, we divide it into the following two parts:
 	\begin{equation*}
 		\begin{aligned}
 			J_2\le& \Big\|\sum_{i=0}^{m}\int_{t_i}^{t_{i+1}}E_{\tau,N}(t_{m+1}-s)AF(u(s))ds\Big \|_{L^p(\Omega;H)}\\
 			&+\Big\|\sum_{i=0}^{m}\int_{t_i}^{t_{i+1}}S_{\tau,N}^{m+1-i}P_NA(F(u(s))-F(u(t_{i+1})))ds\Big \|_{L^p(\Omega;H)} \\
 			=:&J_{21}+J_{22}.
 		\end{aligned}
 	\end{equation*}
 	Using \eqref{nonnegative full semigroup} in Lemma \ref{error of semigrroup} with $\mu=2, \nu=-1$ and Theorem \ref{continous uniform moment}, for $\gamma >\frac{d}{2}$, one can derive
 	\begin{equation*}
 		\begin{aligned}
 			J_{21}\le& \sum_{i=0}^{m}\int_{t_i}^{t_{i+1}}\big\|E_{\tau,N}(t_{m+1}-s)AF(u(s))\big \|_{L^p(\Omega;H)}ds\\
 			\le &C\big(\lambda _{N+1}^{-1}+\tau^{\frac{1}{2}}\big) \sup_{t>0} \|u(t)\|_{L^{3p}(\Omega;H^{\gamma})}^{3}
 			\sum_{i=0}^{m}\int_{t_i}^{t_{i+1}}(t_{m+1}-s)^{-\frac{3}{4}}e^{-\frac{c\lambda _{1}^{2}}{2}(t_{m+1}-s)}ds\\
 			\le& C\big(\lambda _{N+1}^{-1 }+\tau^{\frac{1}{2}}\big).
 		\end{aligned}
 	\end{equation*}
 	Through the H\"{o}lder regularity of $u(t)$ in Theorem \ref{continous uniform moment} and Lemma \ref{semigroup full discrte} with $\nu=1$, we have 
 		\begin{align*}
 			J_{22}\le&\sum_{i=0}^{m}\int_{t_i}^{t_{i+1}}\big\|S_{\tau,N}^{m+1-i}P_NA(F(u(s))-F(u(t_{i+1})))\big \|_{L^p(\Omega;H)}ds\\
 			\le& C\sum_{i=0}^{m}\int_{t_i}^{t_{i+1}}(t_{m+1}-t_i)^{-\frac{1}{2}}e^{-\frac{c\lambda _{1}^{2}}{4}(t_{m+1}-t_i)}\big\|F(u(s))-F(u(t_{i+1}))\big \|_{L^p(\Omega;H)}ds\\
 			\le& C\sum_{i=0}^{m}\int_{t_i}^{t_{i+1}}(t_{m+1}-t_i)^{-\frac{3}{4}}e^{-\frac{c\lambda _{1}^{2}}{4}(t_{m+1}-t_i)}
 			\big(1+\sup_{s>0} \|u(s)\|_{L^{3p}(\Omega;L^{\infty})}^2 \big)\|u(s)-u(t_{i+1})\|_{L^{3p}(\Omega;H)}ds\\
 			\le& C\tau^{\frac{1}{2}}.
 		\end{align*}
 	Similarly, for the third term, it can be divided into two parts
 	\begin{equation*}
 		\begin{aligned}
 			J_3\le& \Big\|\sum_{i=0}^{m}\int_{t_i}^{t_{i+1}}E_{\tau,N}(t_{m+1}-s)G(u(s))dW(s)\Big \|_{L^p(\Omega;H)}\\
 			&+\Big\|\sum_{i=0}^{m}\int_{t_i}^{t_{i+1}}S_{\tau,N}^{m+1-i}(G(u(s))-G(u(t_i)))dW(s)\Big \|_{L^p(\Omega;H)}
 			=:J_{31}+J_{32}.
 		\end{aligned}
 	\end{equation*}
 	Through the Burkh\"{o}lder-Davis-Gundy inequality, the H\"{o}lder inequality and \eqref{integral full semigroup} in Lemma \ref{error of semigrroup}, we obtain
 		\begin{align*}
 			J_{31}\le& \Big ( \sum_{i=0}^{m}\int_{t_i}^{t_{i+1}}\big\|E_{\tau,N}(t_{m+1}-s)G(u(s))\Big \|_{L^p(\Omega;\mathcal{L}_{2}^{0})}^2ds\big )^{\frac{1}{2}}\\ 
 			\le& \Big ( \sum_{i=0}^{m}\int_{t_i}^{t_{i+1}}\|E_{\tau,N}(t_{m+1}-s)\|^{2}_{\mathcal{L}(\dot{H})}\|G(u(s)) \|_{L^p(\Omega;\mathcal{L}_{2}^{0})}^2ds\big )^{\frac{1}{2}}\\
 			\le& C\Big ( \sum_{i=0}^{m}\int_{t_i}^{t_{i+1}}\|E_{\tau,N}(t_{m+1}-s)\|^{2}_{\mathcal{L}(\dot{H})}(\|u(s)\|_{L^p(\Omega;H)}^2+1)ds\big )^{\frac{1}{2}}\\
 			\le& C\Big (\int_{0}^{t_{m+1}}\|E_{\tau,N}(t_{m+1}-s)\|^{2}_{\mathcal{L}(\dot{H})} ds\big )^{\frac{1}{2}}
 			\le C\big(\lambda _{N+1}^{-1}+\tau^{\frac{1}{2}}\big).
 		\end{align*}
 	Furthermore, by the H\"{o}lder regularity of $u(t)$ in Theorem \ref{continous uniform moment} and Lemma \ref{semigroup full discrte} with $\nu=0$, we get
 	\begin{align*}
 			J_{32}\le& C\Big ( \sum_{i=0}^{m}\int_{t_i}^{t_{i+1}}\|S_{\tau,N}^{m+1-i}(G(u(s))-G(u(t_i)))\|_{L^p(\Omega;\mathcal{L}_{2}^{0})}^2ds\Big )^{\frac{1}{2}}\\ 
 			\le &C\Big ( \sum_{i=0}^{m}\int_{t_i}^{t_{i+1}}e^{-c\lambda _{1}^{2}(t_{m+1}-t_i)}\|G(u(s))-G(u(t_i))\|_{L^p(\Omega;\mathcal{L}_{2}^{0})}^2ds\Big )^{\frac{1}{2}}\\
 			\le& C\Big ( \sum_{i=0}^{m}\int_{t_i}^{t_{i+1}}e^{-c\lambda _{1}^{2}(t_{m+1}-t_i)}\|u(s)-u(t_i)\|_{L^p(\Omega;H)}^2ds\Big )^{\frac{1}{2}}
 			\le C\tau^{\frac{1}{2}}.
 	\end{align*}
 	Combing the above inequalities together, we complete the proof.
 	
 \end{proof}
 
 For the second term of \eqref{total error decomposition}, define $e_{m+1}^{N}=\tilde{u}_{m+1}^{N}-u_{m+1}^{N}$ which satisfies 
 \begin{equation}\label{error full-discrete scheme}
 	\left\{\begin{aligned}
 		&e_{m+1}^{N} - e_{m}^{N}
 		= -\tau A^2 e_{m+1}^{N}-\tau A P_N F(u(t_{m+1}))
 		+ \tau A P_N F(u_{m+1}^{N})\\
 		&\quad \quad + P_N G(u(t_m))\triangle_m W-P_N G(u_m^N)\triangle_m W, \\
 		&e^N_0 = 0. 
 	\end{aligned}\right.
 \end{equation}
 Following the same techniques in the proof of Lemma \ref{p-moment uniform in H}, we can deduce the following Lemma.
 
 \begin{lemma}\label{error for full discrete in H^-1 space}
 	Let $u_{0} \in L^{144}(\Omega;H^2)$. Suppose that Assumptions \ref{diffusion coeffient} and \ref{dissipative condition} hold. Then, for any $0<\tau<\frac{\lambda_{1}^{2}-\lambda_{1}-5L_{2}}{144L_2^2(\lambda_{1}^{2}+6)}$, it holds that
 	\begin{equation*}
 		\mathbb{E}[  \|A^{-\frac{1}{2} } e_{m+1}^{N}\|^4  ]\le C\big(\lambda _{N+1}^{-4}+\tau^{2} \big). 
 	\end{equation*}
 \end{lemma}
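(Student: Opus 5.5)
We test the error equation \eqref{error full-discrete scheme} against $A^{-1}e_{m+1}^{N}$ to get an $\dot H^{-1}$ energy identity, as in Step~1 of Lemma~\ref{p-moment uniform in H}, and then square it to reach the fourth moment, as in Step~2. Since $e_0^N=0$ and both $\tilde u^N_{m+1}$ and $u^N_{m+1}$ carry the mass of $P_Nu_0$ (all drift and noise increments have zero mean), $e_m^N\in\dot H$ and $A^{-1}e_m^N$ is well defined. Using $(a-b)a=\tfrac12(a^2-b^2+(a-b)^2)$, the identity becomes
\begin{equation*}
\tfrac12\big(\|e_{m+1}^N\|_{-1}^2-\|e_m^N\|_{-1}^2+\|e_{m+1}^N-e_m^N\|_{-1}^2\big)+\tau\|\nabla e_{m+1}^N\|^2
=-\tau\langle F(u(t_{m+1}))-F(u_{m+1}^N),e_{m+1}^N\rangle+\langle \Delta G_m\Delta_mW,A^{-1}e_{m+1}^N\rangle ,
\end{equation*}
where $\Delta G_m:=P_N(G(u(t_m))-G(u_m^N))$.

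\textbf{Splitting the nonlinearity.} We write $F(u(t_{m+1}))-F(u^N_{m+1})=[F(u(t_{m+1}))-F(\tilde u^N_{m+1})]+[F(\tilde u^N_{m+1})-F(u^N_{m+1})]$.
\begin{itemize}
\item For the second bracket, the one-sided bound $\langle F(a)-F(b),a-b\rangle\ge-\|a-b\|^2$ gives at most $\tau\|e^N_{m+1}\|^2$. Interpolation, $\|e\|^2\le\frac1{2\lambda_1}\|\nabla e\|^2+\frac{\lambda_1}2\|e\|_{-1}^2$, together with $\|\nabla e\|^2\ge\lambda_1\|e\|^2$, leaves the dissipation $\tau(\lambda_1^2-\lambda_1)\|e_{m+1}^N\|_{-1}^2$, as in Lemma~\ref{contraction property}.
\item The first bracket is a consistency term. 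We bound it by $\tau\|F(u(t_{m+1}))-F(\tilde u^N_{m+1})\|\,\|e^N_{m+1}\|$ and then by Young. Its size is controlled through $\|u(t_{m+1})-\tilde u^N_{m+1}\|$ (Theorem~\ref{error estimate with auxilary solution}) times $1+\|u(t_{m+1})\|_{L^\infty}^2+\|\tilde u^N_{m+1}\|_{L^\infty}^2$, using Theorem~\ref{continous uniform moment} and \eqref{auxilary solution stability} with $\gamma>d/2$.
\end{itemize}

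\textbf{Splitting the noise.} We write $\Delta G_m=P_N(G(u(t_m))-G(\tilde u^N_m))+P_N(G(\tilde u^N_m)-G(u^N_m))$. By \eqref{global Lipschitz in H^-1}, the second part gives $L_2\tau\|e_m^N\|_{-1}^2$ in expectation. The first part is bounded by $L_1\|u(t_m)-\tilde u_m^N\|^2$, which again is $O(\lambda_{N+1}^{-2}+\tau)$. The stochastic term is split as $\langle \Delta G_m\Delta_mW,A^{-1}(e_{m+1}^N-e_m^N)\rangle$, absorbed by $\|e_{m+1}^N-e_m^N\|_{-1}^2$, plus the martingale term $\langle\Delta G_m\Delta_mW,A^{-1}e_m^N\rangle$.

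\textbf{Fourth moment.} After these estimates we have an inequality of the form
\begin{equation*}
\tfrac12(a_{m+1}-a_m)+\tau\kappa a_{m+1}\le I_1+I_2+\tau R_m,
\end{equation*}
with $a_m:=\|e_m^N\|_{-1}^2$, $\kappa\approx\lambda_1^2-\lambda_1$, $I_1:=\|\Delta G_m\Delta_mW\|_{-1}^2$, $I_2$ the martingale term, and $R_m$ the consistency remainders. We multiply by $a_{m+1}$ and repeat Step~2 of Lemma~\ref{p-moment uniform in H}.
\begin{itemize}
\item $\mathbb E[I_1^2]\lesssim\tau^2\mathbb E\|\Delta G_m\|^4_{\mathcal L_2^{-1}}$ is harmless.
\item $\mathbb E[I_2^2]\le\tau\,\mathbb E[\|\Delta G_m\|_{\mathcal L_2^{-1}}^2a_m]$, which is at most $\tau(2L_2+\varepsilon)\mathbb E a_m^2$ plus consistency terms.
\item $\mathbb E[I_2a_m]=0$.
\item The remainder obeys $\tau R_ma_{m+1}\le\varepsilon\tau a_{m+1}^2+C\tau R_m^2$.
\end{itemize}
Since $\mathbb E R_m^2\lesssim \|u(t_{m+1})-\tilde u^N_{m+1}\|_{L^8}^4\cdot(\text{moments})\lesssim\lambda_{N+1}^{-4}+\tau^2$, with $p=8$ in Theorem~\ref{error estimate with auxilary solution}, we obtain a recursion of the form
\begin{equation*}
(1+c_1\tau)\,\mathbb E a_{m+1}^2\le\big(1+c_2\tau+C\tau^2\big)\,\mathbb E a_m^2+C\tau(\lambda_{N+1}^{-4}+\tau^2).
\end{equation*}
Here $c_1-c_2>0$ follows from $5L_2<\lambda_1(\lambda_1-1)$ in \eqref{dissipative for H}. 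The step-size restriction involving $144L_2^2(\lambda_1^2+6)$ absorbs the $\tau^2$ term from $I_1^2$. Iterating geometrically yields a bound uniform in $m$.

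\textbf{Main obstacle.} The hardest step is the cubic consistency term. It needs $L^\infty$ moments of both $u$ and $\tilde u^N$, with enough integrability (the $L^{144}$ assumption) to pair with Theorem~\ref{error estimate with auxilary solution} via Hölder, while still being absorbed only by the weak $\|e\|$-dissipation generated from $\|\nabla e\|^2$. I would first try bounding $\|F(a)-F(b)\|_{1}$ against $\|\nabla e\|$ instead, i.e. moving it to the $H^{-1}$ duality $\langle\cdot,e\rangle\le\|\cdot\|_{-1}\|\nabla e\|$, and absorbing it with $\frac14\tau\|\nabla e\|^2$.
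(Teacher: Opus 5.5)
Your proposal is correct and follows essentially the same route as the paper. You test with $A^{-1}e_{m+1}^N$ and split the nonlinearity through $\tilde u_{m+1}^N$ into a consistency term, controlled by Theorem~\ref{error estimate with auxilary solution} and $L^\infty$ moments, plus a one-sided Lipschitz term. You absorb the noise increment into $\|A^{-\frac12}(e_{m+1}^N-e_m^N)\|^2$ and then multiply by $\|A^{-\frac12}e_{m+1}^N\|^2$. The elementary bound $(I_1+I_2)^2\le 2I_1^2+2I_2^2$ (your notation), together with the Burkh\"{o}lder--Davis--Gundy constant $c_4=36$, is exactly what produces the $5L_2$ and $144L_2^2$ in the hypotheses. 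The paper also proves the second-moment bound first, but the fourth-moment recursion closes on its own, so skipping that step is harmless.
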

 \begin{proof}
 	Taking the inner product in $H$ on both sides of \eqref{error full-discrete scheme} with $A^{-1}e_{m+1}^N$ and using the Young inequality, we have
    \begin{equation}
    	\begin{aligned}
    		&\frac{1}{2}\big(\|A^{-\frac12} e_{m+1}^{N}\|^2-\|A^{-\frac12} e_{m}^{N}\|^2+ \|A^{-\frac12}(e_{m+1}^{N}-e_{m}^{N})\|^2\big) + \tau \|A^{\frac12} e_{m+1}^{N}\|^2 \\
    		=& -\tau \langle F(u(t_{m+1}))-F(u_{m+1}^{N}), e_{m+1}^{N} \rangle 
    		+ \langle (G(u(t_{m}))-G(u_{m}^{N}))\Delta_m W, A^{-1} e_{m+1}^{N}\rangle\\
    		=&-\tau \langle F(u(t_{m+1}))-F(\tilde{u}_{m+1}^{N}), e_{m+1}^{N} \rangle
    		-\tau \langle F(\tilde{u}_{m+1}^{N})-F(u_{m+1}^{N}), e_{m+1}^{N} \rangle \\
    		&+\langle (G(u(t_{m}))-G(u_{m}^{N}))\Delta_m W, A^{-1} (e_{m+1}^{N}-e_{m}^{N})\rangle + \langle (G(u(t_{m}))-G(u_{m}^{N}))\Delta_m W, A^{-1} e_{m}^{N}\rangle\\
    		\le& \varepsilon\tau\|e_{m+1}^{N}\|^2
    		+\frac{1}{4\varepsilon }\tau\|F(u(t_{m+1}))-F(\tilde{u}_{m+1}^{N})\|^2 +\tau \|e_{m+1}^{N}\|^2+\frac{1}{2}\|A^{-\frac12}(e_{m+1}^{N}-e_{m}^{N})\|^2 \\
    	    &+\frac{1}{2}\|A^{-\frac{1}{2}}(G(u(t_{m}))-G(u_{m}^{N}))\Delta_m W\|^2
    	    + \langle (G(u(t_{m}))-G(u_{m}^{N}))\Delta_m W, A^{-1} e_{m}^{N}\rangle
    	\end{aligned}
    \end{equation}
 	For the nonlinear term, we obtain
 	\begin{equation*}
 		\|F(u(t_{m+1}))-F(\tilde{u}_{m+1}^{N})\|^2
 		\le C\bigl(1+\|u(t_{m+1})\|_{L^{\infty}}^4+\|\tilde{u}_{m+1}^{N}\|_{L^{\infty}}^4\bigr)
 		\|u(t_{m+1})-\tilde{u}_{m+1}^{N}\|^2 .
 	\end{equation*}
 	Utilizing the inequalities
 	\begin{equation*}
 		\begin{aligned}
 			&\|e_{m+1}^{N}\|^2\le \frac{1}{2\lambda_{1}}\|A^{\frac{1}{2}}e_{m+1}^{N}\|^2+\frac{\lambda_{1}}{2}\|A^{-\frac{1}{2}}e_{m+1}^{N}\|^2, \\
 			&\|A^{\frac{1}{2}}e_{m+1}^{N}\|^2\ge \lambda_{1}^{2}\|A^{-\frac{1}{2}}e_{m+1}^{N}\|^2, 
 		\end{aligned}
 	\end{equation*}
 	we rewrite the above inequality as follows
 	\begin{equation}\label{first estimate in H^-1}
 		\begin{aligned}
 			&\frac{1}{2}\big(\|A^{-\frac12} e_{m+1}^{N}\|^2-\|A^{-\frac12} e_{m}^{N}\|^2\big) + \big(\lambda_1^2-(1+\varepsilon)\lambda_1\big )\tau \|A^{-\frac12} e_{m+1}^{N}\|^2 \\
 			\le &C_{\varepsilon}\tau\bigl(1+\|u(t_{m+1})\|_{L^{\infty}}^4+\|\tilde{u}_{m+1}^{N}\|_{L^{\infty}}^4\bigr)
 			\|u(t_{m+1})-\tilde{u}_{m+1}^{N}\|^2\\
 			&+\frac{1}{2}\|A^{-\frac{1}{2}}(G(u(t_{m}))-G(u_{m}^{N}))\Delta_m W\|^2 
 			+ \langle (G(u(t_{m}))-G(u_{m}^{N}))\Delta_m W, A^{-1} e_{m}^{N}\rangle \\
 			=:&I_1+I_2+I_3.
 		\end{aligned}
 	\end{equation}
 	Through Theorem \ref{continous uniform moment} and \eqref{auxilary solution stability} with $p=8$, Theorem \ref{error estimate with auxilary solution} with $p=4$, we have
 	\begin{equation}
 		\begin{aligned}
 			\mathbb{E}[I_1]\le C_{\varepsilon}\tau \left (\lambda_{N+1}^{-2}+\tau \right ).
 		\end{aligned}
 	\end{equation}
 	 Utilizing \eqref{global Lipschitz in H^-1} in Assumption \ref{diffusion coeffient}, together with $\left \langle (I-P_N)u_0,\phi_0 \right \rangle=0$,  one can deduce 
 	 \begin{equation*}
 	 	\begin{aligned}
 	 		\mathbb{E}[I_2]
 	 		&= \frac{1}{2}\tau \mathbb{E}[\|G(u(t_{m}))-G(u_{m}^{N})\|_{\mathcal{L}_{2}^{-1}}^2] \\
 	 		&\le L_{2}\tau\mathbb{E}[\|A^{-\frac12}(u(t_{m})-\tilde{u}_{m}^{N})\|^2]
 	 		+L_{2}\tau\mathbb{E}[\|A^{-\frac12}e_m^N\|^2]\\
 	 		&\le C\tau(\lambda_{N+1}^{-2}+ \tau)+L_{2}\tau\mathbb{E}[\|A^{-\frac12}e_m^N\|^2] .
 	 	\end{aligned}
 	 \end{equation*}
    Combining the above estimates, we obtain
    \begin{equation*}
    	\begin{aligned}
    		\Bigl[\frac{1}{2} + \big(\lambda_1^2-(1+\varepsilon)\lambda_1\big )\tau\Bigr]
    		\mathbb{E}[\|A^{-\frac12}e_{m+1}^{N}\|^2]
    		\le \Big( \frac{1}{2} + L_{2}\tau\Big)\mathbb{E}[\|A^{-\frac12}e_{m}^{N}\|^2]+C\tau(\lambda_{N+1}^{-2}+ \tau).
    	\end{aligned}
    \end{equation*}
    According to \eqref{dissipative for H^1} in Assumption \ref{dissipative condition}, we can choose $0<\varepsilon <\frac{\lambda_{1}^{2}-\lambda_{1}-L_{2}}{\lambda_{1}}$. 
 	Iterating the preceding inequality and using the fact $e_0^N=0$, a standard induction argument yields
 	\begin{equation*}
 		\begin{aligned}
 			\mathbb{E}[\|A^{-\frac12}e_{m+1}^{N}\|^2] \le C(\lambda_{N+1}^{-2}+ \tau).
 		\end{aligned}
 	\end{equation*}
 	Multiplying the inequality \eqref{first estimate in H^-1} by $\|A^{-\frac12} e_{m+1}^{N}\|^2$, applying the Young inequality and by simple calculations, we have
 	\begin{equation}\label{second estimate for error}
 		\begin{aligned}
 			&\frac{1}{4}\big(\|A^{-\frac12} e_{m+1}^{N}\|^4-\|A^{-\frac12} e_{m}^{N}\|^4\big) 
 			+ \big((1-\varepsilon)\lambda_1^2-\lambda_1\big )\tau \|A^{-\frac12} e_{m+1}^{N}\|^4 \\
 			\le &I_1\|A^{-\frac12} e_{m+1}^{N}\|^2+(I_2+I_3)\|A^{-\frac12} e_{m}^{N}\|^2+(I_2+I_3)^2\\
 			\le &\varepsilon\tau \|A^{-\frac12} e_{m+1}^{N}\|^4+ \frac{1}{4\varepsilon\tau} I_1^2+(I_2+I_3)\|A^{-\frac12} e_{m}^{N}\|^2+(I_2+I_3)^2.
 		\end{aligned}
 	\end{equation}
 	Similarly, by Theorem \ref{continous uniform moment} and \eqref{auxilary solution stability} with $p=16$, Theorem \ref{error estimate with auxilary solution} with $p=8$ and \eqref{global Lipschitz in H^-1} in Assumption \ref{diffusion coeffient}, we have
 		\begin{align*}
 			\mathbb{E}[I_1^2]
 			\le& C_{\varepsilon}\tau\mathbb{E}\left[\bigl(1+\|u(t_{m+1})\|_{L^{\infty}}^8+\|\tilde{u}_{m+1}^{N}\|_{L^{\infty}}^8\bigr)
 			\|u(t_{m+1})-\tilde{u}_{m+1}^{N}\|^4 \right ] \\
 			\le& C_{\varepsilon}\tau\left (\lambda_{N+1}^{-4}+\tau^{2} \right ), \\
 			\mathbb{E}[I_2\|A^{-\frac12}e_{m}^{N}\|^2] 
 			 \le &\frac{1}{2}\tau \mathbb{E}[\|G(u(t_{m}))-G(u_{m}^{N})\|_{\mathcal{L}_{2}^{-1}}^2\|A^{-\frac12}e_{m}^{N}\|^2] \\
 			 \le &(L_{2}+\varepsilon )\tau\mathbb{E}[\|A^{-\frac12}e_{m}^{N}\|^4]+C_{\varepsilon}\tau \left ( \lambda_{N+1}^{-4}+\tau^{2} \right ), \\
 			 \mathbb{E}[I_3^2]\le &\mathbb{E}\left[\|A^{-\frac{1}{2}}(G(u(t_{m}))-G(u_{m}^{N}))\Delta_m W\|^2\|A^{-\frac12}e_{m}^{N}\|^2\right ]\\
 			 =& \tau\mathbb{E}\left[\|G(u(t_{m}))-G(u_{m}^{N})\|_{\mathcal{L}_{2}^{-1}}^2\|A^{-\frac12}e_{m}^{N}\|^2\right ]\\
 			 \le &\left ( 2L_{2}+\varepsilon \right )\tau \mathbb{E}[\|A^{-\frac12}e_{m}^{N}\|^4]+C_{\varepsilon}\tau \left ( \lambda_{N+1}^{-4}+\tau^{2} \right ).
 		\end{align*}
 	Using the Burkh\"{o}lder-Davis-Gundy inequality and \eqref{global Lipschitz in H^-1} in Assumption \ref{diffusion coeffient}, we deduce that
 	\begin{equation*}
 		\begin{aligned}
 			\mathbb{E}[I_2^2]\le& \frac{1}{4}c_4\tau ^{2} \mathbb{E}\big [ \|G(u({t_m}))-G(u_{m}^{N})\|_{\mathcal{L}_{2}^{-1}}^{4}\big]
 			\le \frac{1}{4}c_4L_2^2\tau ^{2}\mathbb{E}\big [ \|u({t_m})-u _{m}^{N} \|_{-1}^{4}\big]\\
 			\le &2c_4L_2^2\tau ^{2}\mathbb{E}\big [\big \|A^{-\frac{1}{2}}(u({t_m})-\tilde{u}_{m}^{N})\big \|_{-1}^{4}\big]
 			+2c_4L_2^2\tau ^{2}\mathbb{E}\big [\big \|A^{-\frac{1}{2}}e_{m}^{N}\big \|_{-1}^{4}\big]\\
 			\le &72L_2^2\tau ^{2}\mathbb{E}\big [\big \|A^{-\frac{1}{2}}e_{m}^{N}\big \|_{-1}^{4}\big]+C\tau^{2}\left ( \lambda_{N+1}^{-4}+\tau^{2} \right ),
 		\end{aligned}
 	\end{equation*}
 	where $c_{4}$ denotes the constant in the Burkh\"{o}lder-Davis-Gundy inequality and $c_{p}=\big(\frac{p(p-1)}{2}\big)^{\frac{p}{2}}$.
 	Taking expectations on both sides of inequality \eqref{second estimate for error} and noting that the term $I_3\|A^{-\frac12} e_{m}^{N}\|^2$ vanishes upon taking expectation and $\mathbb{E}\left [ I_{2}^{2}\right ]\sim \tau^{2}$, we substituting the above inequality into \eqref{second estimate for error} and conclude that
 	\begin{equation*}
 		\begin{aligned}
 			\Bigl[\frac{1}{4} + \big((1-\varepsilon)\lambda_1^2-\lambda_1-\varepsilon \big )\tau\Bigr]
 			\mathbb{E}[\|A^{-\frac12}e_{m+1}^{N}\|^4]
 			\le \Big( \frac{1}{4} +(5L_{2}+3\varepsilon+144L_2^2\tau ) \tau\Big)\mathbb{E}[\|A^{-\frac12}e_{m}^{N}\|^4]+C\tau(\lambda_{N+1}^{-4}+ \tau^{2}).
 		\end{aligned}
 	\end{equation*}
 	From \eqref{dissipative for H}, we have $\varepsilon =\frac{\lambda_{1}^{2}-\lambda_{1}-5L_{2}}{\lambda_{1}^{2}+6}>0$. Then, for any $0<\tau<\frac{\lambda_{1}^{2}-\lambda_{1}-5L_{2}}{144L_2^2(\lambda_{1}^{2}+6)}$, 
 	it follows that $144L_2^2\tau<\varepsilon$.
 	Similarly, by employing a numerical iteration, the proof is complete.

 \end{proof}

 \begin{theorem}\label{the second error}
 	Let $u_{0} \in L^{144}(\Omega;H^2)$. Suppose that Assumptions \ref{diffusion coeffient} and \ref{dissipative condition} hold. Then, for every
 	$0<\tau <\min\big\{\tau_0^*,\tau_1^*, \frac{1}{\lambda _{1}^{2}},\frac{\lambda_{1}^{2}-\lambda_{1}-5L_{2}}{144L_2^2(\lambda_{1}^{2}+6)}\big\} $, there exists a constant $C>0$ such that  
 	\begin{equation*}
 		 \|e_{m+1}^{N}\|_{L^2(\Omega;H)} \le C\big(\lambda _{N+1}^{-1}+\tau^{\frac{1}{2}}\big). 
 	\end{equation*}
 \end{theorem}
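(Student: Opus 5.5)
We will upgrade the negative-norm estimate of Lemma~\ref{error for full discrete in H^-1 space} to an $L^2(\Omega;H)$ bound by an energy estimate in $H$ for the error equation \eqref{error full-discrete scheme}, using the fourth-order dissipation to absorb the nonlinear term. The key point is that $\tau\|Ae_{m+1}^N\|^2$ controls $\|e_{m+1}^N\|^2$ up to interpolation with $\|A^{-1/2}e_{m+1}^N\|$, which is already known to be small.

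\textbf{Energy identity in $H$.} Take the inner product of \eqref{error full-discrete scheme} with $e_{m+1}^N$. This gives
\begin{align*}
&\tfrac12\big(\|e_{m+1}^N\|^2-\|e_m^N\|^2+\|e_{m+1}^N-e_m^N\|^2\big)+\tau\|Ae_{m+1}^N\|^2\\
&\qquad=-\tau\langle F(u(t_{m+1}))-F(u_{m+1}^N),Ae_{m+1}^N\rangle+\langle (G(u(t_m))-G(u_m^N))\Delta_mW,e_{m+1}^N\rangle .
\end{align*}

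\textbf{Nonlinear term.} Split $F(u(t_{m+1}))-F(u_{m+1}^N)$ into two pieces.
\begin{itemize}
\item The first piece is $F(u(t_{m+1}))-F(\tilde u_{m+1}^N)$. Bound it with Young's inequality by $\varepsilon\tau\|Ae_{m+1}^N\|^2+C\tau(1+\|u\|_{L^\infty}^4+\|\tilde u\|_{L^\infty}^4)\|u(t_{m+1})-\tilde u_{m+1}^N\|^2$. Its expectation is $O(\tau(\lambda_{N+1}^{-2}+\tau))$ by Theorem~\ref{continous uniform moment}, \eqref{auxilary solution stability} (note $\gamma>d/2$ gives the $L^\infty$ moments) and Theorem~\ref{error estimate with auxilary solution} with $p=4$.
\item The second piece is $F(\tilde u_{m+1}^N)-F(u_{m+1}^N)$. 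Bound it by $\varepsilon\tau\|Ae_{m+1}^N\|^2+C\tau(1+\|\tilde u_{m+1}^N\|_{L^\infty}^4+\|u_{m+1}^N\|_{L^\infty}^4)\|e_{m+1}^N\|^2$.
\end{itemize}

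\textbf{Main obstacle.} The last term above is superlinear in random coefficients. I plan to handle it with the interpolation $\|e\|^2\le\|Ae\|^{2/3}\|A^{-1/2}e\|^{4/3}$ and Young's inequality, which gives
\[
C\tau X\|e\|^2\le \varepsilon\tau\|Ae\|^2+C\tau X^{3/2}\|A^{-1/2}e\|^2,\qquad X:=1+\|\tilde u_{m+1}^N\|_{L^\infty}^4+\|u_{m+1}^N\|_{L^\infty}^4 .
\]
Taking expectations and using Cauchy--Schwarz, this is at most $C\tau\,\|X^{3/2}\|_{L^2(\Omega)}\big(\mathbb E\|A^{-1/2}e_{m+1}^N\|^4\big)^{1/2}$. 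The moment factor is uniformly bounded by Theorem~\ref{all regularity of full discrete} and \eqref{auxilary solution stability}; this requires moments of order $12$ in $L^\infty$, which must be checked against the admissible ranges $p\le 8$ and $p\le 24$. The other factor is $\le C(\lambda_{N+1}^{-2}+\tau)$ by Lemma~\ref{error for full discrete in H^-1 space}. This is exactly why the fourth-moment negative-norm lemma was proved first. If the moment ranges are insufficient, I would fall back on the $\|e\|^2\le\|A^{1/2}e\|\,\|A^{-1/2}e\|$ variant with Hölder exponents adjusted.

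\textbf{Noise term.} Split $\langle(G(u(t_m))-G(u_m^N))\Delta_mW,e_{m+1}^N\rangle$ into a part against $e_{m+1}^N-e_m^N$ and a part against $e_m^N$.
\begin{itemize}
\item The part against $e_{m+1}^N-e_m^N$ is absorbed by $\tfrac12\|e_{m+1}^N-e_m^N\|^2$ plus $\tfrac12\|(G(u(t_m))-G(u_m^N))\Delta_mW\|^2$. By \eqref{global Lipschitz in H}, the latter has expectation at most $L_1\tau\,\mathbb E\|u(t_m)-u_m^N\|^2\le 2L_1\tau\,\mathbb E\|e_m^N\|^2+C\tau(\lambda_{N+1}^{-2}+\tau)$.
\item The part against $e_m^N$ is a martingale increment and has zero expectation.
\end{itemize}

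\textbf{Closing the recursion.} Use $\|Ae\|^2\ge\lambda_1^2\|e\|^2$, valid since $e$ has zero mean. Provided $\lambda_1^2(1-3\varepsilon)>L_1$, which should follow from \eqref{dissipative for H} because $L_1$ is comparable to $L_3$ in the assumptions, we obtain
\[
\big(\tfrac12+c\tau\big)\mathbb E\|e_{m+1}^N\|^2\le\big(\tfrac12+L_1\tau\big)\mathbb E\|e_m^N\|^2+C\tau(\lambda_{N+1}^{-2}+\tau).
\]
Iterating from $e_0^N=0$ as in Lemma~\ref{p-moment uniform in H} yields $\sup_m\mathbb E\|e_m^N\|^2\le C(\lambda_{N+1}^{-2}+\tau)$. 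The restriction $\tau<1/\lambda_1^2$ keeps the contraction factor uniform.
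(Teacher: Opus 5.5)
The energy identity, the treatment of $F(u(t_{m+1}))-F(\tilde u_{m+1}^N)$ and the martingale term are fine. The proof breaks at the step you flagged as the main obstacle, and your fallback does not rescue it.

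After Cauchy--Schwarz against Lemma~\ref{error for full discrete in H^-1 space} you need $\mathbb{E}[X^3]$, i.e.\ $\sup_m\mathbb{E}\|u_m^N\|_{L^\infty}^{12}<\infty$. For $\tilde u_m^N$ this is covered by \eqref{auxilary solution stability}. For the numerical solution, however, the paper only has Theorem~\ref{all regularity of full discrete} with $p\le 8$, and this cutoff is structural. $H^1$-moments of $u_m^N$ are available only up to order $32$ (Lemma~\ref{uniform estimate of u_m^N in H^1}, the range covered by Assumption~\ref{dissipative condition}(ii)). Lifting to $H^\gamma$ through the cubic drift in the mild formula triples the moment order, so order $12$ would need $H^1$-moments of order $36$, i.e.\ a stronger dissipativity hypothesis.

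The fallback does not change this. In a one-step recursion the only dissipation is $\tau\|Ae_{m+1}^N\|^2$ and the only small quantity is $\|A^{-1/2}e_{m+1}^N\|$. Since $\|A^{1/2}e\|\le\|Ae\|^{2/3}\|A^{-1/2}e\|^{1/3}$, routing through $\|A^{1/2}e\|\,\|A^{-1/2}e\|$ and absorbing gives $X^{3/2}\|A^{-1/2}e\|^2$ again. Keeping $\|A^{1/2}e\|$ as an $O(1)$ factor instead yields only half the rate.

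A secondary issue: your closing condition $\lambda_1^2(1-3\varepsilon)>L_1$ is not among the hypotheses. Assumption~\ref{dissipative condition} does not involve $L_1$, and a Lipschitz bound gives growth with $L_3\lesssim L_1$, not conversely.

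The paper never runs an $H$-energy recursion for $e_m^N$.
\begin{itemize}
\item Step 1 sums the $H^{-1}$ inequality from Lemma~\ref{error for full discrete in H^-1 space} with weights $e^{\gamma_3 t_{i+1}}$. There $F(\tilde u^N)-F(u^N)$ is handled by the one-sided Lipschitz property, so no moments of $u^N$ enter. This gives $\mathbb{E}\big[\big(\tau\sum_{i=0}^{m}e^{-\gamma_3(t_{m+1}-t_{i+1})}\|A^{1/2}e_{i+1}^N\|^2\big)^2\big]\le C(\lambda_{N+1}^{-4}+\tau^2)$.
\item Step 2 bounds $e_{m+1}^N$ through the discrete mild formula. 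The critical term is estimated by $\|A^{1/2}(F(\tilde u_{i+1}^N)-F(u_{i+1}^N))\|\le C(1+\|\tilde u_{i+1}^N\|_\delta^2+\|u_{i+1}^N\|_\delta^2)\|e_{i+1}^N\|_1$ and Cauchy--Schwarz against Step 1.
\end{itemize}
Because the nonlinearity enters linearly there, rather than squared against $\|Ae\|$, only eighth moments of $\|u_i^N\|_\delta$ are needed. The noise contribution of $e^N$ is handled through $L_2$ and the smoothing of $S_{\tau,N}^m$ from $\dot H^{-1}$ into $H$, so no relation between $L_1$ and $\lambda_1$ is used.

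If you want to keep your energy route, substitute $u^N=\tilde u^N-e^N$. Then $|f(\tilde u^N)-f(u^N)|\le C(1+|\tilde u^N|^2)|e^N|+C|e^N|^3$.
\begin{itemize}
\item The linear part needs only $\sup_m\mathbb{E}\|\tilde u_m^N\|_{L^\infty}^{12}$, which \eqref{auxilary solution stability} provides.
\item The cubic part satisfies $\tau\|e\|_{L^6}^6\le C\tau\|A^{1/2}e\|^6\le\varepsilon\tau\|Ae\|^2+C\tau\|A^{1/2}e\|^{12}\|A^{-1/2}e\|^2$. Its expectation requires only $24$th $H^1$-moments of $u_m^N$ and $\tilde u_m^N$, both available.
\item The $L_1$-term can be interpolated as $\varepsilon\tau\|Ae_m^N\|^2+C\tau\|A^{-1/2}e_m^N\|^2$ and absorbed by carrying part of the previous step's dissipation in the recursion.
\end{itemize}
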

 \begin{proof}
 	In order to prove the convergence estimates for $\|e_{m+1}^{N}\|_{L^2(\Omega;H)}$, we split the proof into two steps. \\
 	\textbf{Step 1.} For $\gamma_{3}\le \frac{\lambda _{1}^{2}}{2}$, we first show that 
 	\begin{equation}\label{middle process in error estimate}
 		\mathbb{E}\Big [ \Big (\tau \sum_{i=0}^{m}e^{-\gamma_{3}(t_{m+1}-t_{i+1})}\|A^{\frac12}e_{i+1}^{N}\|^2\Big )^2 \Big ]
 		\le C\big(\lambda _{N+1}^{-4}+\tau^{2}\big).
 	\end{equation}
 	By applying the estimate obtained in Lemma \ref{error for full discrete in H^-1 space}, we have
    \begin{equation}\label{main estimate in step1}
    	\begin{aligned}
    		&\|A^{-\frac12} e_{m+1}^{N}\|^2- \|A^{-\frac12} e_{m}^{N}\|^2+(2-2\varepsilon)\tau \|A^{\frac12} e_{m+1}^{N}\|^2\\
    		\le&\frac{1}{\varepsilon }\tau\|A^{-\frac{1}{2} } e_{m+1}^{N}\|^2+C_{\varepsilon}\tau \bigl(1+\|u(t_{m+1})\|_{L^{\infty}}^4+\|\tilde{u}_{m+1}^{N}\|_{L^{\infty}}^4\bigr)
    		\|u(t_{m+1})-\tilde{u}_{m+1}^{N}\|^2 \\
    		&+\|A^{-\frac{1}{2}}(G(u(t_{m}))-G(u_{m}^{N}))\Delta_m W\|^2+2\langle (G(u(t_{m}))-G(u_{m}^{N}))\Delta_m W, A^{-1} e_{m}^{N}\rangle .
    	\end{aligned}
    \end{equation}
    Multiplying both sides by $e^{\gamma_{3}t_{m+1}}$, utilizing the inequality 
    \begin{align*}
    	e^{\gamma_{3}t_{m+1}}\|A^{-\frac12}e_{m}^{N}\|^2
    	=& e^{\gamma_{3}t_{m}}\|A^{-\frac12}e_{m}^{N}\|^2+e^{\gamma_{3}t_{m+1}}(1-e^{-\gamma_{3}\tau})\|A^{-\frac12} e_{m}^{N}\|^2\\
    	\le& e^{\gamma_{3}t_{m}}\|A^{-\frac12}e_{m}^{N}\|^2+\gamma_{3}e^{\gamma_{3}t_{m+1}}\tau\|A^{-\frac12} e_{m}^{N}\|^2,
    \end{align*}
    replacing index $m$ by $i$ and summing over $i$ from 0 to $m$, we obtain
    	\begin{align}\label{summation of step 1}
    		&e^{\gamma_{3}t_{m+1}}\|A^{-\frac12} e_{m+1}^{N}\|^2+(2-2\varepsilon)\tau\sum_{i=0}^{m} e^{\gamma_{3}t_{i+1}}\|A^{\frac12} e_{i+1}^{N}\|^2 \notag\\
    		\le& \gamma_{3}\tau \sum_{i=0}^{m}e^{\gamma_{3}t_{i+1}}\|A^{-\frac12}e_{i}^{N}\|^2
    		+\frac{1}{\varepsilon}\tau\sum_{i=0}^{m} e^{\gamma_{3}t_{i+1}}\|A^{-\frac{1}{2} }e_{i+1}^{N}\|^2 \notag\\
    		&+C_{\varepsilon}\tau \sum_{i=0}^{m}e^{\gamma_{3}t_{i+1}}\bigl(1+\|u(t_{i+1})\|_{L^{\infty}}^4+\|\tilde{u}_{i+1}^{N}\|_{L^{\infty}}^4\bigr)\|u(t_{i+1})-\tilde{u}_{i+1}^{N}\|^2\\
    		&+\sum_{i=0}^{m}e^{\gamma_{3}t_{i+1}}\|A^{-\frac{1}{2}}(G(u(t_{i}))-G(u_{i}^{N}))\Delta_i W\|^2
    		+2\sum_{i=0}^{m}e^{\gamma_{3}t_{i+1}}\langle(G(u(t_{i}))-G(u_{i}^{N}))\Delta_i W, A^{-1} e_{i}^{N}\rangle.\notag
    	\end{align}
    By choosing $\tau<\frac{1}{\lambda _{1}^{2}}$, one can easily deduce that 
    \begin{align*}
    	\gamma_{3}\tau \sum_{i=0}^{m}e^{\gamma_{3}t_{i+1}}\|A^{-\frac12}e_{i}^{N}\|^2
    	=\gamma_{3}e^{\gamma_{3}\tau}\tau\sum_{i=0}^{m}e^{\gamma_{3}t_{i}}\|A^{-\frac12}e_{i}^{N}\|^2
    	\le \lambda_{1}^{2}\tau\sum_{i=0}^{m}e^{\gamma_{3}t_{i}}\|A^{-\frac12}e_{i}^{N}\|^2
    	\le\tau\sum_{i=0}^{m}e^{\gamma_{3}t_{i+1}}\|A^{\frac12}e_{i+1}^{N}\|^2
    \end{align*}
    Through simple calculation, we rewrite the inequality \eqref{summation of step 1} as  
 	\begin{align*}
 			&\|A^{-\frac12} e_{m+1}^{N}\|^2+(1-2\varepsilon)\tau\sum_{i=0}^{m}e^{-\gamma_{3}(t_{m+1}-t_{i+1})}\|A^{\frac12} e_{i+1}^{N}\|^2 \\
 			\le &\frac{1}{\varepsilon}\tau\sum_{i=0}^{m}e^{-\gamma_{3}(t_{m+1}-t_{i+1})}\|A^{-\frac{1}{2}}e_{i+1}^{N}\|^2\\
 			&+C_{\varepsilon}\tau\sum_{i=0}^{m}e^{-\gamma_{3}(t_{m+1}-t_{i+1})} \bigl(1+\|u(t_{i+1})\|_{L^{\infty}}^4+\|\tilde{u}_{i+1}^{N}\|_{L^{\infty}}^4\bigr)
 			\|u(t_{i+1})-\tilde{u}_{i+1}^{N}\|^2 \\
 			&+\sum_{i=0}^{m}e^{-\gamma_{3}(t_{m+1}-t_{i+1})}\|A^{-\frac{1}{2}}(G(u(t_{i}))-G(u_{i}^{N}))\Delta_i W\|^2\\
 			&+2\sum_{i=0}^{m}e^{-\gamma_{3}(t_{m+1}-t_{i+1})}\langle (G(u(t_{i}))-G(u_{i}^{N}))\Delta_i W, A^{-1} e_{i}^{N}\rangle \\
 			=:&I_1+I_2+I_3+I_4.
 	\end{align*}
 	By squaring the inequality, followed by taking expectations and applying Lemma \ref{error for full discrete in H^-1 space}, we derive
 	\begin{equation*}
 		\begin{aligned}
 			\mathbb{E}\big [I_1^2 \big ]
 			\le& C\mathbb{E}\Big [ \tau \sum_{i=0}^{m}e^{-\gamma_{3}(t_{m+1}-t_{i+1})}\|A^{-\frac12}e_{i+1}^{N}\|^4\cdot \tau \sum_{i=0}^{m}e^{-\gamma_{3}(t_{m+1}-t_{i+1})}\Big ]\\
 			\le& C\big(\lambda _{N+1}^{-4}+\tau^{2}\big).
 		\end{aligned}
 	\end{equation*}
 	Similarly, by applying Theorem \ref{continous uniform moment}, \eqref{auxilary solution stability} and Theorem \ref{error estimate with auxilary solution}, we have 
 	\begin{equation*}
 		\mathbb{E}\big [I_2^2 \big ]\le C\big(\lambda _{N+1}^{-4}+\tau^{2}\big).
 	\end{equation*}
 	For the third term, according to \eqref{global Lipschitz in H^-1} in Assumption \ref{diffusion coeffient}, Theorem \ref{error estimate with auxilary solution}, Lemma \ref{error for full discrete in H^-1 space}, the independence of $\Delta_i W$ and $\Delta_j W$ when $i\ne j$ and the It\^{o} isometry, we derive
 	\begin{equation*}
 		\begin{aligned}
 			\mathbb{E}\big [I_3^2 \big ]
 			\le& C\mathbb{E}\Big [\sum_{i=0}^{m}e^{-\gamma_{3}(t_{m+1}-t_{i+1})}\|A^{-\frac{1}{2}}(G(u(t_{i}))-G(u_{i}^{N}))\Delta_i W\|^4\cdot 
 			\sum_{i=0}^{m}e^{-\gamma_{3}(t_{m+1}-t_{i+1})}\Big ]\\
 			\le& C\sum_{i=0}^{m}e^{-\gamma_{3}(t_{m+1}-t_{i+1})}\mathbb{E} [\|A^{-\frac{1}{2}}(G(u(t_{i}))-G(u_{i}^{N}))\Delta_i W\|^4]
 			\cdot \sum_{i=0}^{m}e^{-\gamma_{3}(t_{m+1}-t_{i+1})}\\
 			\le& C\tau\sum_{i=0}^{m}e^{-\gamma_{3}(t_{m+1}-t_{i+1})}\mathbb{E}\big [\|A^{-\frac{1}{2}}(G(u(t_{i}))-G(u_{i}^{N}))\|_{\mathcal{L}_{2}^{0}}^4 \big ]\\
 			\le & C\big(\lambda _{N+1}^{-4}+\tau^{2}\big).
 		\end{aligned}
 	\end{equation*}
 	Similarly, for the last term, through \eqref{global Lipschitz in H^-1} in Assumption \ref{diffusion coeffient}, Theorem \ref{error estimate with auxilary solution}, Lemma \ref{error for full discrete in H^-1 space} and the Burkh\"{o}lder-Davis-Gundy inequality, we can deduce
 	\begin{align*}
 			\mathbb{E}\big [I_4^2 \big ]
 			\le& C\tau\mathbb{E}\Big [ \sum_{i=0}^{m}e^{-2\gamma_{3}(t_{m+1}-t_{i+1})}\|A^{-\frac{1}{2}}e_{i}^{N}\|^2\|A^{-\frac{1}{2}}(G(u(t_{i}))-G(u_{i}^{N}))\|_{\mathcal{L}_{2}^{0}}^2\Big ] \\
 			\le& C\tau\mathbb{E}\Big [ \sum_{i=0}^{m}e^{-2\gamma_{3}(t_{m+1}-t_{i+1})}\|A^{-\frac{1}{2}}e_{i}^{N}\|^2 \left (2L_{2}\|A^{-\frac{1}{2}}(u(t_i)-\tilde{u}_{i}^{N})\|^2+2L_{2}\|A^{-\frac{1}{2}}e_{i}^{N}\|^2\right )\Big ]\\
 			\le& C\tau\mathbb{E}\Big [ \sum_{i=0}^{m}e^{-2\gamma_{3}(t_{m+1}-t_{i+1})}\big(\|A^{-\frac{1}{2}}e_{i}^{N}\|^4+\|A^{-\frac{1}{2}}(u(t_i)-\tilde{u}_{i}^{N})\|^4\big )\Big ]\\
 			\le&C\big(\lambda _{N+1}^{-4}+\tau^{2}\big).
 	\end{align*}
 	Combining the above inequalities together, we complete the proof of \eqref{middle process in error estimate}. \\
 	\textbf{Step 2.} According to the definition of $e_{m+1}^{N}$ given by \eqref{error full-discrete scheme}, one can deduce 
 	\begin{equation*}
 		\begin{aligned}
 			\|e_{m+1}^{N}\|_{L^2(\Omega;H)}\le& \Big\|\tau\sum_{i=0}^{m}S_{\tau,N}^{m+1-i}AP_N(F(u(t_{i+1}))-F(\tilde{u}_{i+1}^{N}))\Big\|_{L^2(\Omega;H)}\\
 			&+\Big\|\tau\sum_{i=0}^{m}S_{\tau,N}^{m+1-i}AP_N(F(\tilde{u}_{i+1}^{N})-F(u_{i+1}^{N}))\Big\|_{L^2(\Omega;H)}\\
 			&+\Big\|\sum_{i=0}^{m}S_{\tau,N}^{m+1-i}P_N(G(u(t_i))-G(\tilde{u}_{i}^{N}))\Delta_iW\Big\|_{L^2(\Omega;H)}\\
 			&+\Big\|\sum_{i=0}^{m}S_{\tau,N}^{m+1-i}P_N(G(\tilde{u}_{i}^{N})-G(u_{i}^{N}))\Delta_iW\Big\|_{L^2(\Omega;H)}\\
 			=:&J_1+J_2+J_3+J_4.
 		\end{aligned}
 	\end{equation*}
 	By Lemma \ref{semigroup full discrte} with $\nu =1$, Theorem \ref{continous uniform moment}, \eqref{auxilary solution stability} and Theorem \ref{error estimate with auxilary solution} yields 
 		\begin{align*}
 			J_1\le& \tau\sum_{i=0}^{m}\big\|S_{\tau,N}^{m+1-i}AP_N(F(u(t_{i+1}))-F(\tilde{u}_{i+1}^{N}))\big\|_{L^2(\Omega,H)}\\
 			\le& C\tau\sum_{i=0}^{m}(t_{m+1}-t_i)^{-\frac{1}{2}}e^{-\frac{c\lambda_{1}^{2}}{2}(t_{m+1}-t_i)}\|F(u(t_{i+1}))-F(\tilde{u}_{i+1}^{N})\|_{L^2(\Omega,H)}\\
 			\le& C\tau\sum_{i=0}^{m}(t_{m+1}-t_i)^{-\frac{1}{2}}e^{-\frac{c\lambda_{1}^{2}}{2}(t_{m+1}-t_i)}\big (1+\|u(t_{i+1})\|_{L^6(\Omega,L^{{\infty}})}^2+\|\tilde{u}_{i+1}^{N}\|_{L^6(\Omega,L^{{\infty}})}^2\big )\|u(t_{i+1})-\tilde{u}_{i+1}^{N}\|_{L^{6}(\Omega,H)}\\
 			\le &C\big(\lambda _{N+1}^{-1}+\tau^{\frac{1}{2}}\big).
 		\end{align*}
 	By Lemma \ref{semigroup full discrte} with $\nu =\frac{1}{2}$, Theorem \ref{all regularity of full discrete}, \eqref{auxilary solution stability} and \eqref{middle process in error estimate}, one can derive, for $\delta>\frac{d}{2}$, that
 		\begin{align*}
 			J_2\le &C\Big(\mathbb{E}\Big [ \Big(\tau\sum_{i=0}^{m}\big\|S_{\tau,N}^{m+1-i}AP_N(F(\tilde{u}_{i+1}^{N})-F(u_{i+1}^{N}))\big\|\Big)^2 \Big]\Big)^{\frac{1}{2}}\\
 			\le &C\Big(\mathbb{E}\Big [ \Big(\tau\sum_{i=0}^{m}(t_{m+1}-t_i)^{-\frac{1}{4}}e^{-\frac{c\lambda_{1}^{2}}{2}(t_{m+1}-t_i)}\big\|A^{\frac{1}{2}}(F(\tilde{u}_{i+1}^{N})-F(u_{i+1}^{N}))\big\|\Big)^2 \Big]\Big)^{\frac{1}{2}}\\
 			\le& C\Big(\mathbb{E}\Big [ \Big(\tau\sum_{i=0}^{m}(t_{m+1}-t_i)^{-\frac{1}{4}}e^{-\frac{c\lambda_{1}^{2}}{2}(t_{m+1}-t_i)}\big (1+\|\tilde{u}_{i+1}^{N}\|_{\delta}^2+\|u_{i+1}^{N}\|_{\delta}^2\big )\|e_{i+1}^{N}\|_1 \Big)^2 \Big]\Big)^{\frac{1}{2}}\\
 			\le& C\Big(\mathbb{E}\Big [ \tau\sum_{i=0}^{m}(t_{m+1}-t_i)^{-\frac{1}{2}}e^{-\frac{c\lambda_{1}^{2}}{2}(t_{m+1}-t_i)}\big (1+\|\tilde{u}_{i+1}^{N}\|_{\delta}^4+\|u_{i+1}^{N}\|_{\delta}^4 \big) \cdot \tau\sum_{i=0}^{m}e^{-\frac{c\lambda_{1}^{2}}{2}(t_{m+1}-t_i)}\|A^{\frac{1}{2}}e_{i+1}^{N}\|^2 \Big]\Big)^{\frac{1}{2}}\\
 			\le &C\Big(\mathbb{E}\Big [ \Big (\tau\sum_{i=0}^{m}(t_{m+1}-t_i)^{-\frac{1}{2}}e^{-\frac{c\lambda_{1}^{2}}{2}(t_{m+1}-t_i)}\big (1+\|\tilde{u}_{i+1}^{N}\|_{\delta}^4+\|u_{i+1}^{N}\|_{\delta}^4 \big) \Big )^2 \Big]\Big)^{\frac{1}{4}}\\
 			 &\times\Big(\mathbb{E}\Big [ \Big (\tau\sum_{i=0}^{m}e^{-\frac{c\lambda_{1}^{2}}{2}(t_{m+1}-t_i)}\|A^{\frac{1}{2}}e_{i+1}^{N}\|^2 \Big )^2 \Big]\Big)^{\frac{1}{4}}
 			\le C\big(\lambda _{N+1}^{-1}+\tau^{\frac{1}{2}}\big).
 		\end{align*}
 	 By applying the Burkh\"{o}lder-Davis-Gundy inequality and \eqref{global Lipschitz in H} in Assumption \ref{diffusion coeffient}, we can deduce
 	 	\begin{align*}
 	 		J_3\le& C\Big(\tau\sum_{i=0}^{m}\mathbb{E} \Big[\big\|S_{\tau,N}^{m+1-i}P_N(G(u(t_i))-G(\tilde{u}_{i}^{N}))\big\|_{\mathcal{L}_{2}^{0}}^2 \Big ]\Big)^{\frac{1}{2}} \\
 	 		\le& C\Big(\tau\sum_{i=0}^{m}e^{-c\lambda_{1}^{2}(t_{m+1}-t_i)}\mathbb{E}\big[\big\|G(u(t_i))-G(\tilde{u}_{i}^{N})\big\|_{\mathcal{L}_{2}^{0}}^2 \big ]\Big)^{\frac{1}{2}}\\
 	 		\le& C\Big(\tau\sum_{i=0}^{m}e^{-c\lambda_{1}^{2}(t_{m+1}-t_i)}\mathbb{E}\big[\big\|u(t_i)-\tilde{u}_{i}^{N}\big\|^2 \big ]\Big)^{\frac{1}{2}}
 	 		\le C\big(\lambda _{N+1}^{-1}+\tau^{\frac{1}{2}}\big).
 	 	\end{align*}
    In a similar manner, utilizing Lemma \ref{error for full discrete in H^-1 space}, we can derive
 	\begin{align*}
 		J_4\le& C\Big(\tau\sum_{i=0}^{m}\mathbb{E} \Big[\big\|S_{\tau,N}^{m+1-i}P_N(G(\tilde{u}_{i}^{N})-G(u_{i}^{N}))\big\|_{\mathcal{L}_{2}^{0}}^2 \Big ]\Big)^{\frac{1}{2}} \\
 		\le& C\Big(\tau\sum_{i=0}^{m}(t_{m+1}-t_i)^{-\frac{1}{2}}e^{-c\lambda_{1}^{2}(t_{m+1}-t_i)}\mathbb{E}\big[\big\|G(\tilde{u}_{i}^{N})-G(u_{i}^{N})\big\|_{\mathcal{L}_{2}^{-1}}^2 \big ]\Big)^{\frac{1}{2}}\\
 		\le& C\Big(\tau\sum_{i=0}^{m}(t_{m+1}-t_i)^{-\frac{1}{2}}e^{-c\lambda_{1}^{2}(t_{m+1}-t_i)}\mathbb{E}\big[\big\|A^{-\frac{1}{2}}e_i^N\big\|^2 \big ]\Big)^{\frac{1}{2}}
 		\le C\big(\lambda _{N+1}^{-1}+\tau^{\frac{1}{2}}\big).
 	\end{align*}
 	Combining the above inequalities together, the proof is complete.
 	
 \end{proof}

\section{Numerical experiments}\label{numerical experiments}
  Some numerical tests are presented in this section to illustrate the convergence rates in time and space of the proposed scheme.

  \begin{example}[Strong convergence rates]
  	We consider the following stochastic Cahn--Hilliard equation driven by multiplicative noise
  	\begin{equation}\label{CH-equation}
  		{\small	\left\{
  			\begin{aligned}
  				&\frac{\partial u(t,x)}{\partial t}+\Delta^2u(t,x)-\Delta(u^3(t,x)-u(t,x))
  				= G(u(t,x))\frac{\partial W(t,x)}{\partial t},
  				&& t\in (0,T],\ x\in (0,1), \\
  				&u(0,x)
  				= 0.1+\sum_{k=1}^{N}\frac{\sqrt{2}}{10}k^{-3}\cos(k\pi x),
  				&& x\in (0,1), \\
  				&\frac{\partial u}{\partial x}\Big|_{x=0}
  				= \frac{\partial u}{\partial x}\Big|_{x=1}=\frac{\partial^3 u}{\partial x^3}\Big|_{x=0}
  				= \frac{\partial^3 w}{\partial x^3}\Big|_{x=1} = 0,
  				&& t\in (0,T]. \\
  			\end{aligned}
  			\right. }
  	\end{equation}
  \end{example}
  \noindent Here we choose the diffusion coefficient $G$ such that $G(u)Q^{\frac{1}{2}}\phi_k=\sqrt{q_k}A^{-\frac{1}{8}}\mathcal{P}(u\phi_k)$ for $k \ge 1$, where $\mathcal{P}$ denotes the orthogonal projection on $\dot{H}$. The driving process $W(t,x)$ is an $\dot H$-valued $Q$-Wiener process given by
  \begin{equation*}
  	W(t,x)
  	=
  	\sum_{j=1}^{\infty}
  	\frac{\sqrt{2}}{(j\pi)^2}
  	\cos(j\pi x)\beta_j(t),
  \end{equation*}
  where $\beta_{j}(t)$ are independent standard Brownian motions. 
  In what follows, the orthonormal eigen-pairs $\left \{\lambda_{k}, \phi_{k}\right\}_{k\in \mathbb{N}}$ of Neumann Laplacian on $H$ is 
   \begin{equation*}
   	\lambda _{k}=k^{2}\pi ^{2}, \quad \phi_{k}(x)=\sqrt{2}\cos(k\pi x), \quad k\ge 1, \quad \phi_{0}(x)=1.
   \end{equation*}
  Since the exact solution of Eq.\eqref{CH-equation} is not available, we choose numerical solutions with very small step size for reference. We take 1000 sample paths and choose $M_{ref}=2^{11}$ and $N_{ref}=2^{8}$. The error at the terminal time $T$ is approximated
  respectively by
  \begin{equation*} 
  	\begin{aligned}
  		&E\left ( \tau ,N \right )=\max_{0\le m\le M}\Big( \frac{1}{1000} \sum_{i=1}^{1000}\left \|u^{\tau,N}(t_m,\omega_{i})-u_{m}^{N}(\omega_{i})\right \|^{2} \Big )^{\frac{1}{2}},
  	\end{aligned}
  \end{equation*}
  where $u_{m}^{N}(\omega_{i})$ denote the numerical solution at $t_m$ and $i$-th paths with $N$ orthonormal bases and $M=\frac{T}{\tau}$. 
  
  We now illustrate the uniform-in-time strong convergence rates of the fully
  discrete scheme established in Theorem~\ref{main theorem}. To highlight the
  long-time behavior, we take $T=100$. Figure~\ref{FIG:1} displays the spatial
  and temporal errors for different choices of $N$ and $M$, respectively. The
  corresponding spatial errors and convergence orders are reported in
  Table~\ref{convergence rates}.
  
  \begin{figure}
  	\centering
  	\includegraphics[scale=0.5]{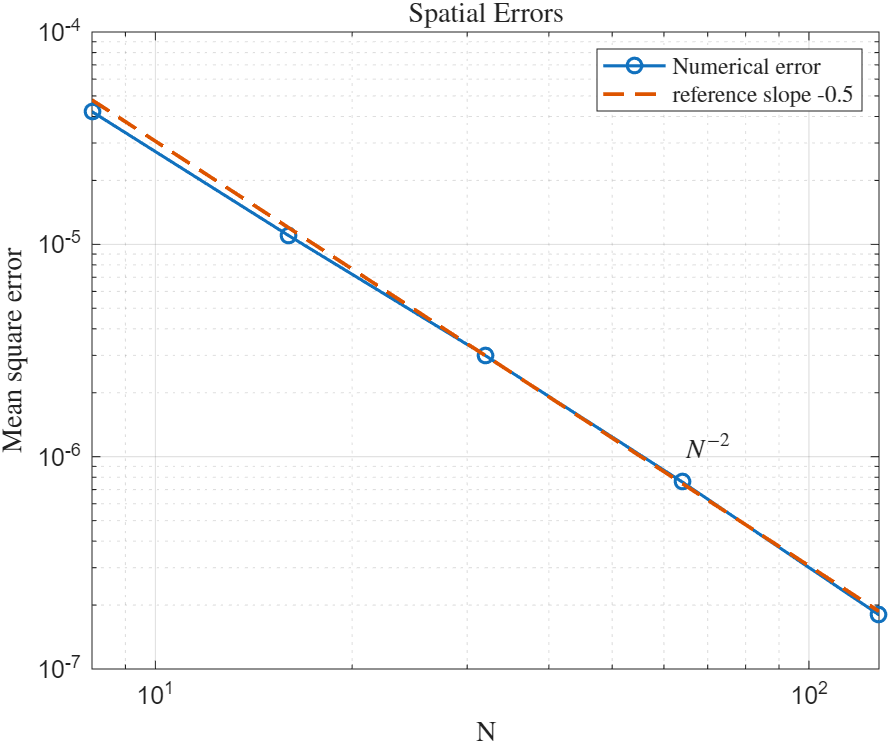}
  	\includegraphics[scale=0.5]{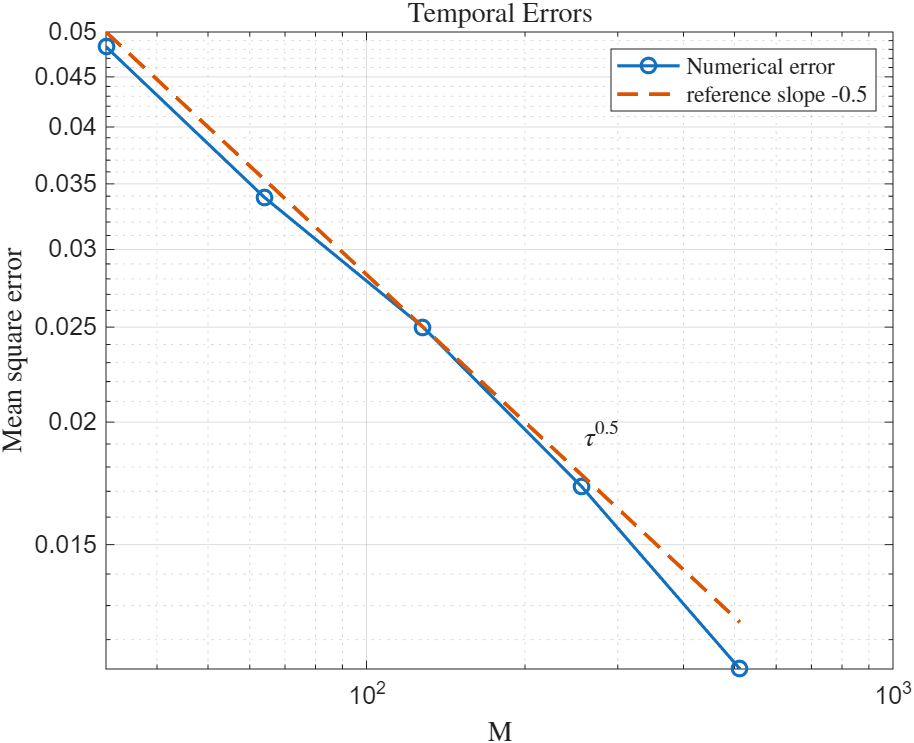}
  	\caption{Errors associated with 1000 samples and their variation with $N$ (left) and $M$ (right). }
  	\label{FIG:1}
  \end{figure}

  \begin{table}[h]
  	\caption{Errors and strong convergence rates of the numerical scheme with $T=100$.}
  	\label{Table1}
  	\centering
  	\renewcommand{\arraystretch}{1.2}
  	\setlength{\tabcolsep}{8pt}
  	\begin{tabular}{c c c c c c}
  		\hline
  		$\left(M,N\right)$ 
  		& $E\left (\tau,N\right)$ 
  		& order 
  		& $\left(M,N\right)$ 
  		& $E\left (\tau,N \right)$ 
  		& order \\
  		\hline
  		$\left(2^{11},8\right)$   
  		& $0.4216\times 10^{-4}$ 
  		& --   
  		& $\left(32,256\right)$   
  		& 0.0483 
  		& -- \\
  		
  		$\left(2^{11},16\right)$  
  		& $0.1099\times 10^{-4}$ 
  		& 1.9395 
  		& $\left(64,256\right)$  
  		& 0.0339 
  		& 0.5129  \\
  		
  		$\left(2^{11},32\right)$  
  		& $0.0299\times 10^{-4}$ 
  		& 1.8801 
  		& $\left(128,256\right)$  
  		& 0.0250 
  		& 0.4367 \\
  		
  		$\left(2^{11},64\right)$  
  		& $0.0076\times 10^{-4}$ 
  		& 1.9744 
  		& $\left(256,256\right)$  
  		& 0.0172 
  		& 0.5404 \\
  		
  		$\left(2^{11},128\right)$ 
  		& $0.0018\times 10^{-4}$ 
  		& 2.0477 
  		& $\left(512,256\right)$ 
  		& 0.0112
  		& 0.6148 \\
  		\hline
  		Expected order 
  		& 
  		& $2$ 
  		& 
  		& 
  		& $0.5$ \\
  		\hline
  	\end{tabular}\label{convergence rates}
  \end{table}

 \begin{example}
 	We consider the stochastic Cahn--Hilliard equation driven by multiplicative noise on the two-dimensional domain $\mathcal{D}=(0,1)^{2}$:
 	\begin{equation}\label{eq:SCH}
 		\left\{
 		\begin{aligned}
 			&du + A\left(Au + u^3-u\right)dt
 			= G(u)dW(t), 
 			&& t\in[0,T],\quad x\in\mathcal{D},\\
 			&u(0,x)=u_0,
 			&& x\in \mathcal{D}.
 		\end{aligned}
 		\right.
 	\end{equation}
 \end{example}
  \noindent Here, the diffusion coefficient is defined by $G(u)Q^{\frac{1}{2}}\phi_k=\sqrt{q_k}A^{-\frac{1}{4}}\mathcal{P}(u\phi_k)$ for $k \ge 1$, where $\mathcal{P}$ denotes the orthogonal projection onto the zero-mean space $\dot{H}$. 
  
  Let $M=2^{11}$, $N=2^{8}$ and $T=100$.
  We choose the bounded test function $\Phi(v)=\exp\left(-\|v\|_{-1}^2\right)$ and define the corresponding empirical time average
  \begin{equation*}
  	\mathcal A_m(u_0)
  	=\frac{1}{m+1}\sum_{j=0}^{m}
  	\Phi\!\left(u_j^{N}(u_0)\right).
  \end{equation*} 
  where $u_j^{N}(u_0)$ denotes the numerical solution starting from $u_0$.
  
 Figure~\ref{FIG:2} shows that the empirical averages corresponding to different initial conditions with the same conserved mass approach approximately the same value. These numerical solutions evolve in the same invariant affine space $H_{N,-1}^{\alpha}$. By contrast, initial conditions with different conserved spatial averages yield different limiting values of the chosen observable. Thus, for the test function $\Phi$ and the numerical parameters considered here, the long-time average appears to be insensitive to the initial spatial profile within a fixed mass space but depends on the conserved mass. These observations are consistent with the theoretical existence and uniqueness of the invariant measure $\pi_{\alpha}^{\tau,N}$ on each $H_{N,-1}^{\alpha}$。

  \begin{figure}
  	\centering
  	\includegraphics[scale=0.55]{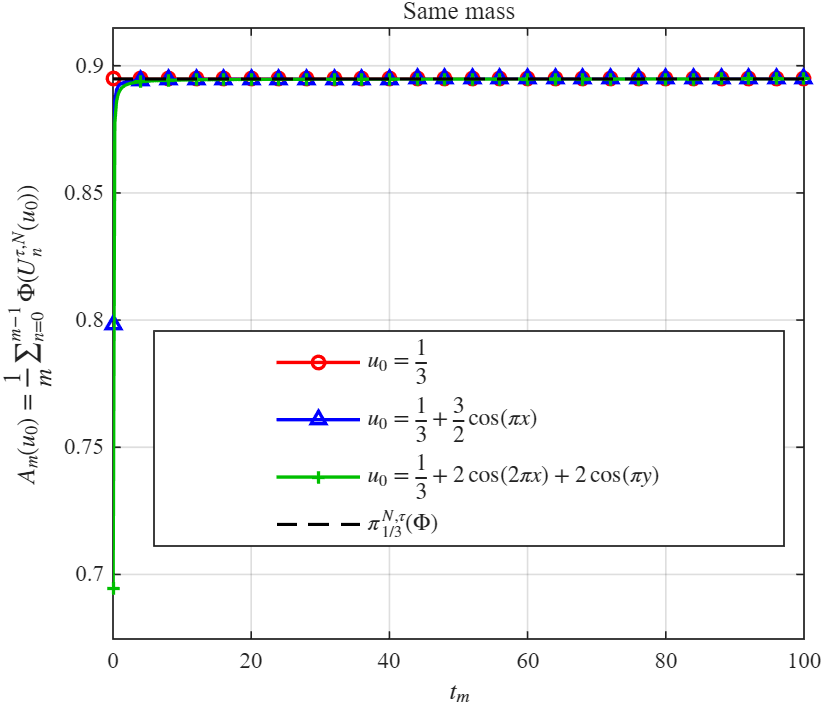}
  	\includegraphics[scale=0.55]{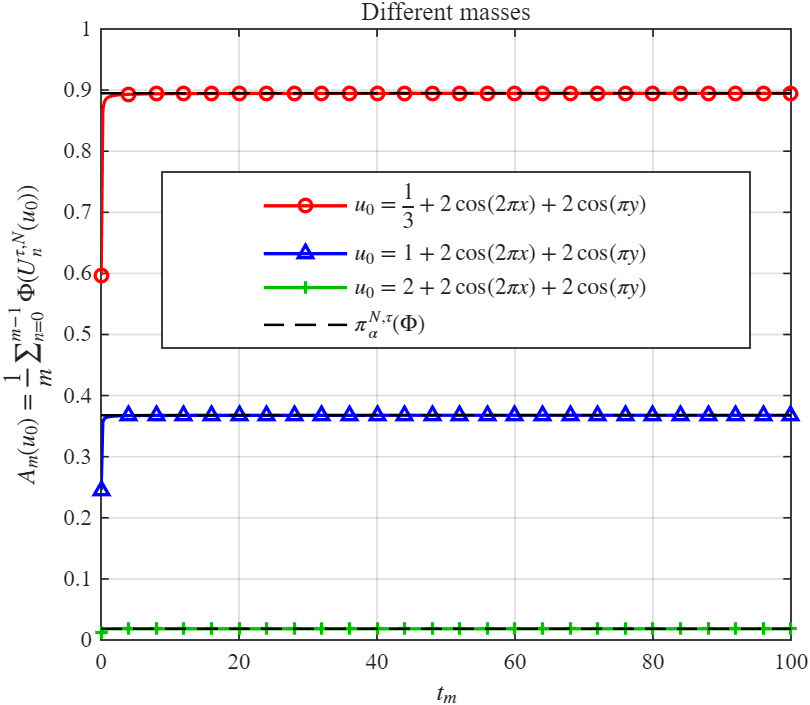}
  	\caption{Empirical time averages of $\Phi(v)$. Left: three initial conditions with the same conserved mass $\alpha=\frac{1}{3}$. Right: three initial conditions with conserved masses $\alpha=\frac{1}{3}$, $1$ and $2$, respectively.}
  	\label{FIG:2}
  \end{figure}

  \section*{Data Availability}
  Data will be made available on request.
  \section*{Declaration of competing interest}
  The authors declare that they have no known competing financial interests or personal relationships that could have appeared to influence the work reported in this paper.
  \section*{Statements and Declarations}
  {This work was supported by the National Natural Science Foundation of China (No. 12571420),
  	and the funding from the Jiangsu Provincial Scientific Research Center of Applied Mathematics
  	(No. BK20233002). }

  \appendix
  \section{Proof of Theorem \ref{continous uniform moment}}\label{detailed proof of Theorem 2.4}
       By the inequality \eqref{semigroup_1} with $\nu=1+\frac{\gamma}{2}$ and $\nu=\frac{\gamma}{2}$, we have
 		\begin{align*}
 			\|u(t)\|_{L^{p}(\Omega;H^{\gamma })}
 			\le	& \|S(t)u_0\|_{L^{p}(\Omega;H^{\gamma })}
 			+ \int_0^t \|AS(t-s) F(u(s))\|_{L^{p}(\Omega;H^{\gamma })} ds
 			\\
 			&+\Big \|\int_{0}^{t}S(t-s)G(u(s))dW(s)\Big \|_{L^{p}(\Omega ;H^{\gamma})}\\
 			\le& C\|u_0\|_{L^{p}(\Omega;H^{\gamma})}+C \int_0^t (t-s)^{-\frac{1}{2}-\frac{\gamma}{4}}e^{-\frac{\lambda _{1}^{2}}{2}(t-s) }
 			\left( 1+\|u(s)\|_{L^{3p}(\Omega;L^6)}^3\right) ds\\
 			&+C\left(\int_{0}^{t}(t-s)^{-\frac{\gamma}{2}}e^{-\lambda_{1}^{2}(t-s)}\Big(\mathbb{E}[\left \| G(u(s)) \right \|_{\mathcal{L}_2^0}^{p}]\Big )^{\frac{2}{p}} ds\right )^{\frac{1}{2}}.
 		\end{align*}
 		By Lemma \ref{uniform estimate of u(t) in H^1} and \eqref{linear growth in H} in Assumption \ref{diffusion coeffient}, we complete the proof of the inequality \eqref{uniform stability}.

 		For the second term \eqref{time regularity}, through the definition of $u(t)$, we get
 		\begin{equation*}
 			\begin{aligned}
 				&\|u(t)-u(s)\|_{L^{p}(\Omega;H)}\\ \le&\|(S(t-s)-I)S(s)u_0\|_{L^{p}(\Omega;H)}
 				+\int_{0}^{s}\|(S(t-s)-I)S(s-r)AF(u(r))\|_{L^{p}(\Omega;H)}dr\\
 				&+\int_{s}^{t}\|S(t-r)AF(u(r))\|_{L^{p}(\Omega;H)}dr
 				+\Big\|\int_{0}^{s}(S(t-s)-I)S(s-r)G(u(r))dW(r)\Big \|_{L^{p}(\Omega;H)}\\
 				&+\Big \|\int_{s}^{t}S(t-r)G(u(r))dW(r)\Big \|_{L^{p}(\Omega;H)}\\
 				=:&I_1+I_2+I_3+I_4+I_5.
 			\end{aligned}
 		\end{equation*}
 		Employing \eqref{semigroup_3} with $\mu=0, \rho=1, r=\frac{1}{2}$ yields
 		\begin{equation*}
 			I_1\le C(t-s)^{\frac{1}{2}}\|u_0\|_{L^{p}(\Omega;H^2)}.
 		\end{equation*}
 		By \eqref{semigroup_3} with $\mu=1, \rho=\frac{1}{2}, r=\frac{1}{2}$ and the inequality \eqref{uniform stability}, we have
 			\begin{align*}
 				I_2\le& C\int_{0}^{s}(s-r)^{-\frac{3}{4}}(t-s)^{\frac{1}{2}}e^{-\frac{\lambda_{1}^{2}}{2}(s-r)}\|A^{\frac{1}{2}} F(u(r))\|_{L^{p}(\Omega;H)}dr \\
 				\le& C(t-s)^{\frac{1}{2}}\sup_{t>0} \mathbb{E}\big [\|u(t)\|_{L^{\infty}}^{2p}\|\nabla u(t)\|^{p}\big ]\int_{0}^{s}(s-r)^{-\frac{3}{4}}e^{-\frac{\lambda_{1}^{2}}{2}(s-r)}dr \\
 				\le& C(t-s)^{\frac{1}{2}}.
 			\end{align*}
 		Similarly, for $I_3$, by \eqref{semigroup_1} with $\nu=1$ we have
 		\begin{equation*}
 			I_3 \le C\int_{s}^{t}(t-r)^{-\frac{1}{2}}e^{-\frac{\lambda_{1}^{2}}{2}(t-r)}\|F(u(r))\|_{L^{p}(\Omega,H)}dr \le C(t-s)^{\frac{1}{2}}.
 		\end{equation*}
 		Utilizing \eqref{semigroup_3} with $\mu=0, \rho=\frac{1}{2} , r=\frac{1}{2}$, the inequality \eqref{uniform stability}, \eqref{linear growth in H^1} in Assumption \ref{diffusion coeffient} and the Burkh\"{o}lder-Davis-Gundy inequality, one can deduce
 		\begin{equation}
 			\begin{aligned}
 				I_4\le& C\Big(\int_{0}^{s}\Big(\mathbb{E}\big [\big \|(S(t-s)-I)S(s-r)G(u(r))\big \|_{\mathcal{L}_2^0}^{p}\big ]\Big )^{\frac{2}{p}} dr\Big )^{\frac{1}{2}} \\
 				\le& C\Big(\int_{0}^{s}(t-s)(s-r)^{-\frac{1}{2}}e^{-\lambda _{1}^{2}(s-r) } \Big( \mathbb{E}\big [\left \|G(u(r))\right \|_{\mathcal{L}_2^{1}}^{p}\big ] \Big )^{\frac{2}{p}} dr\Big )^{\frac{1}{2}}\\
 				\le& C(t-s)^{\frac{1}{2}}\Big(\int_{0}^{s}(s-r)^{-\frac{1}{2} }e^{-\lambda _{1}^{2}(s-r) } \Big( \mathbb{E}\big [\left \|u(r)\right\|_{1}^{p}\big ]+1 \Big )^{\frac{2}{p}} dr\Big )^{\frac{1}{2}}\\
 				\le& C(t-s)^{\frac{1}{2}}.		
 			\end{aligned}
 		\end{equation}
 		For the last term $I_5$, utilizing \eqref{semigroup_1} with $\nu =0$, Lemma \ref{uniform boundness in L^2}, \eqref{linear growth in H} in Assumption \ref{diffusion coeffient} and the Burkh\"{o}lder-Davis-Gundy inequality, one can deduce
 		\begin{equation}
 			\begin{aligned}
 				I_5\le& C\left(\int_{s}^{t}\Big(\mathbb{E}\big [\big \|S(t-r)G(u(r))\big \|_{\mathcal{L}_2^0}^{p}\big ] \Big)^{\frac{2}{p}} dr\right )^{\frac{1}{2}} \\
 				\le& C\left(\int_{s}^{t} \big(\mathbb{E}[\left \|u(r)\right \|^{p}+1 ]\big)^{\frac{2}{p}} dr\right )^{\frac{1}{2}}\\
 				\le &C(t-s)^{\frac{1}{2}} .
 			\end{aligned}
 		\end{equation}
 		Combining the above inequalities all, we complete the proof.

  \section{Proof of Lemma \ref{semigroup full discrte}}\label{semigroup for full discrete}
  
  Set $r(z)=(1+z)^{-1}$. As shown in the proof of Theorem 7.1 in \cite{thomee2007}, there exists two positive constants $0<c<1$ and $C$ such that 
  \begin{equation}\label{basis inequality}
  	\begin{aligned}
  		&r(z) \le e^{-cz}, &&z\in [0,1],  \\
  		&\left | r(z)-e^{-z} \right |  \le C z^2,  &&z\in [0,1]. 
  	\end{aligned}
  \end{equation}
  By choosing $\tau$ small enough, one can deduce that $\tau \lambda _{1}^{2}\le 1$. For the case $\nu \in [0,1]$, it follows from the Parseval identity and the inequality \eqref{basis inequality} that
  	\begin{align*}
  		\|A^\nu S_{\tau,N}^m \varphi\|^2=&\sum_{k=0}^N \lambda_k^{2\nu} (1+\tau \lambda_k^2)^{-2m}\langle \varphi, \phi_k \rangle^2\\
  		=&(1+\tau \lambda_1^2)^{-m}\sum_{k=1}^N \lambda_k^{2\nu} (1+\tau \lambda_k^2)^{-m} \langle\varphi, \phi_k \rangle^2+k(\nu)\langle \varphi, \phi_0 \rangle^2\\
  		\le& e^{-c\lambda_1^2t_m}\sum_{k=1}^N\frac{\lambda_k^{2\nu}}{1+t_m \lambda_k^2}\langle\varphi, \phi_k \rangle^2+k(\nu)\langle \varphi, \phi_0 \rangle^2\\
  		\le &e^{-c\lambda_1^2t_m}\sum_{k=1}^N\big(\frac{t_m\lambda_k^{2}}{1+t_m \lambda_k^2}\big)^{\nu}\frac{1}{(1+t_m \lambda_k^2)^{1-\nu }}t_{m}^{-\nu }\langle\varphi, \phi_k \rangle^2+k(\nu)\langle \varphi, \phi_0 \rangle^2 \\
  		\le &t_{m}^{-\nu }e^{-c\lambda_1^2t_m}\|\varphi \|^2+k(\nu)\langle \varphi, \phi_0 \rangle^2.
  	\end{align*}
  Similarly, for the case $\nu \in (1,2)$, we have 
  \begin{equation*}
  	\begin{aligned}
  		\|A^\nu S_{\tau,N}^m \varphi\|^2
  		=&(1+\tau \lambda_1^2)^{-(2-\nu)m}\sum_{k=1}^N \lambda_k^{2\nu} (1+\tau \lambda_k^2)^{-\nu m} \langle\varphi, \phi_k \rangle^2+k(\nu)\langle \varphi, \phi_0 \rangle^2\\
  		\le &e^{-c(2-\nu)\lambda_1^2t_m}\sum_{k=1}^N\big(\frac{t_m\lambda_k^{2}}{1+t_m \lambda_k^2}\big)^{\nu}t_{m}^{-\nu }\langle\varphi, \phi_k \rangle^2+k(\nu)\langle \varphi, \phi_0 \rangle^2 \\
  		\le &t_{m}^{-\nu }e^{-c(2-\nu)\lambda_1^2t_m}\|\varphi \|^2+k(\nu)\langle \varphi, \phi_0 \rangle^2.
  	\end{aligned}
  \end{equation*}
  Thus the proof is complete.

  \section{Proof of Lemma \ref{error of semigrroup}}\label{solution operator difference}
  
   The proofs of \eqref{positive full semigroup} and \eqref{integral full semigroup} are given in \cite[Lemma 5.2]{qi2024} and are therefore omitted here. In what follows, we focus on proving the inequality \eqref{nonnegative full semigroup}.
  By the triangle inequality, \eqref{semigroup_1} with $\nu=\frac{\alpha -\beta}{2}$ and \eqref{semigroup_3} with $\mu=0, \rho=\frac{\beta}{2}, r=\frac{\alpha}{4}$ yields
  \begin{equation*}
  	\begin{aligned}
  		\|E_{\tau,N}(t)\varphi \|\le& \|S(t)(I-P_N)\varphi \|+\|S(t)(I-S(t_{m+1}-t))P_N\varphi \|
  		+\|(P_NS(t_{m+1})-S_{\tau,N}^{m+1})\varphi \|  \\
  		\le& C\lambda _{N+1}^{-\frac{\alpha }{2} }t^{-\frac{\alpha-\beta}{4}}e^{-\frac{\lambda _{1}^{2} }{2}t }\|\varphi\|_{\beta}
  		+Ct^{-\frac{\alpha-\beta}{4}}(t_{m+1}-t)^{\frac{\alpha }{4} }e^{-\frac{\lambda _{1}^{2} }{2}t }\|\varphi\|_{\beta}  
  		+\|(P_NS(t_{m+1})-S_{\tau,N}^{m+1})\varphi \|  \\
  		\le& C\big(\lambda _{N+1}^{-\frac{\alpha }{2} }+\tau^{\frac{\alpha}{4}}\big) t^{-\frac{\alpha-\beta}{4}}e^{-\frac{\lambda _{1}^{2} }{2}t }\|\varphi\|_{\beta}+\|(P_NS(t_{m+1})-S_{\tau,N}^{m+1})\varphi \|.
  	\end{aligned}
  \end{equation*}
  It remains to estimate $\|(P_NS(t_{m+1})-S_{\tau,N}^{m+1})\varphi \|$. By the the Parseval identity, it follows that
  \begin{equation*}
  	\begin{aligned}
  		\|(P_NS(t_{m+1})-S_{\tau,N}^{m+1})\varphi \|^2=\sum_{k=1}^{N}\big |e^{-\lambda _{k}^{2}t_{m+1}}-r(\tau\lambda _{k}^{2})^{m+1}\big |^2\left \langle \varphi ,\phi_k \right \rangle ^2=:I_1+I_2,
  	\end{aligned}
  \end{equation*}
  where 
  \begin{equation}\label{semigroup error}
  	\begin{aligned}
  		I_1:&=\sum_{k=1}^{n}\big |e^{-\lambda _{k}^{2}t_{m+1}}-r(\tau\lambda _{k}^{2})^{m+1}\big |^2\left \langle \varphi ,\phi_k \right \rangle ^2,\\
  		I_2:&=\sum_{k=n+1}^{N}\big |e^{-\lambda _{k}^{2}t_{m+1}}-r(\tau\lambda _{k}^{2})^{m+1}\big |^2\left \langle \varphi ,\phi_k \right \rangle ^2.
  	\end{aligned}
  \end{equation}
  We choose an appropriate $\tau$ such that $\tau\lambda_1^2\le 1$. Then we can find a positive integer $n$ such that $\tau \lambda _{n}^{2}\le 1 <\tau \lambda _{n+1}^{2}$. For the first case $\tau \lambda _{k}^{2}\le 1$ with $1\le k \le n$, utilizing the inequality \eqref{basis inequality}, we have
  	\begin{align}
  		\big |e^{-\lambda _{k}^{2}t_{m+1}}-r(\tau\lambda _{k}^{2})^{m+1}\big |
  		=&\big |(r(\tau\lambda _{k}^{2})-e^{-\lambda _{k}^{2}\tau })\sum_{i=0}^{m}r(\tau\lambda _{k}^{2})^{m-i} e^{-\lambda _{k}^{2}t_i} \big |\notag\\
  		\le& C\tau ^{2}\lambda _{k}^{4}(m+1)e^{-c\lambda _{k}^{2}t_{m+1}}\label{case 1}\\
  		\le &C(t_{m+1}\lambda _{k}^{2})^{1+\frac{\alpha -\beta }{4}}e^{-\frac{c\lambda _{k}^{2}}{2}t_{m+1}}
  		t_{m+1}^{-\frac{\alpha -\beta }{4}}(\tau \lambda _{k}^{2}) ^{1-\frac{\alpha}{4}}\tau ^{\frac{\alpha }{4}}\lambda _{k}^{\frac{\beta}{2}}e^{-\frac{c\lambda _{k}^{2}}{2}t_{m+1}}\notag\\
  		\le& Ct_{m+1}^{-\frac{\alpha -\beta }{4}}\tau ^{\frac{\alpha }{4}}\lambda _{k}^{\frac{\beta}{2}}e^{-\frac{c\lambda _{k}^{2}}{2}t_{m+1}}.\notag
  	\end{align}
  Substituting the inequality \eqref{case 1} into \eqref{semigroup error}, we can obtain
  \begin{equation}\label{third error}
  	I_1\le C\tau ^{\frac{\alpha }{2}}t_{m+1}^{-\frac{\alpha -\beta }{2}}e^{-c\lambda _{1}^{2}t_{m+1}}\|\varphi\|_{\beta}^2.
  \end{equation}
  For the case $\tau \lambda _{k}^{2}> 1$ with $n+1 \le k \le N$, we have 
  \begin{equation}\label{case 2}
  	\begin{aligned}
  		&\sup_{\tau \lambda _{k}^{2}> 1}e^{-\frac{m+1}{2}\tau\lambda _{k}^{2}}\le e^{-\frac{m+1}{2}}\le C(m+1)^{-\frac{\alpha}{2}}, &&\alpha \ge 0\\
  		&r(\tau\lambda _{k}^{2})^{\frac{m+1}{2}}\le \sup_{z\le 1}r(z)^{\frac{m+1}{2}}\le e^{-\frac{m+1}{2}}\le C(m+1)^{-\frac{\alpha}{2}}, &&\alpha \ge 0. 
  	\end{aligned}
  \end{equation}
  By simple calculation, we can divide $I_2$ into two parts
  \begin{equation*}
  	\begin{aligned}
  		I_2
  		\le& C\sum_{k=n+1}^{N}\big(e^{-2\lambda _{k}^{2}t_{m+1}}+r(\tau\lambda _{k}^{2})^{2m+2} \big)\left \langle \varphi ,\phi_k \right \rangle ^2\\
  		\le & Ce^{-\lambda _{1}^{2}t_{m+1}}\sum_{k=n+1}^{N}e^{-\lambda _{k}^{2}t_{m+1}}\left \langle \varphi ,\phi_k \right \rangle ^2
  		+C r(\tau\lambda _{1}^{2})^{m+1}\sum_{k=n+1}^{N}r(\tau\lambda _{k}^{2})^{m+1}\left \langle \varphi ,\phi_k \right \rangle ^2 
  		=:I_{21}+I_{22}.
  	\end{aligned}
  \end{equation*} 
  Through the inequality \eqref{case 2}, $\alpha \ge 0$ and $\beta \le 0$, we have
  \begin{equation*}
  	\begin{aligned}
  		I_{21}\le& Ce^{-\lambda _{1}^{2}t_{m+1}}\sum_{k=n+1}^{N}(t_{m+1}\lambda _{k}^{2})^{-\frac{\beta}{2}}e^{-\frac{\lambda _{k}^{2}}{2}t_{m+1}}(t_{m+1}\lambda _{k}^{2})^{\frac{\beta}{2}}(m+1)^{-\frac{\alpha }{2}}\left \langle \varphi ,\phi_k \right \rangle ^2 \\
  		\le& Ce^{-\lambda _{1}^{2}t_{m+1}}\tau ^{\frac{\alpha }{2}}t_{m+1}^{-\frac{\alpha -\beta }{2}}\sum_{k=n+1}^{N}\lambda _{k}^{\beta}\left \langle \varphi ,\phi_k \right \rangle ^2 \\
  		\le& Ce^{-\lambda _{1}^{2}t_{m+1}}\tau ^{\frac{\alpha }{2}}t_{m+1}^{-\frac{\alpha -\beta }{2}}\|\varphi\|_\beta^2.
  	\end{aligned}
  \end{equation*}
  Similarly, for the term $I_{22}$, through the inequality \eqref{case 2}, we deduce 
  \begin{equation*}
  	\begin{aligned}
  		I_{22}\le& Ce^{-c\lambda _{1}^{2}t_{m+1}}\sum_{k=n+1}^{N}\frac{1}{1+\frac{1}{2}t_{m+1}\lambda _{k}^{2}}(m+1)^{-\frac{\alpha }{2}}\left \langle \varphi ,\phi_k \right \rangle ^2 \\
  		\le &Ce^{-c\lambda _{1}^{2}t_{m+1}}\sum_{k=n+1}^{N}\Big(\frac{\frac{1}{2}t_{m+1}\lambda _{k}^{2}}{1+\frac{1}{2}t_{m+1}\lambda _{k}^{2}}\Big)^{-\frac{\beta}{2}}\big(1+\frac{1}{2}t_{m+1}\lambda _{k}^{2}\big)^{-1-\frac{\beta}{2}}(t_{m+1}\lambda _{k}^{2})^{\frac{\beta}{2}}(m+1)^{-\frac{\alpha }{2}}\left \langle \varphi ,\phi_k \right \rangle ^2\\
  		\le& Ce^{-c\lambda _{1}^{2}t_{m+1}}\tau ^{\frac{\alpha }{2}}t_{m+1}^{-\frac{\alpha -\beta }{2}}\|\varphi\|_\beta^2.
  	\end{aligned}
  \end{equation*}
  Combining the above inequalities together, we can deduce 
  \begin{equation*}
  	I_{2} \le Ce^{-c\lambda _{1}^{2}t_{m+1}}\tau ^{\frac{\alpha }{2}}t_{m+1}^{-\frac{\alpha -\beta }{2}}\|\varphi\|_\beta^2.
  \end{equation*}
  Thus the proof of the inequality \eqref{nonnegative full semigroup} is complete.

   \end{document}